\theoremstyle{plain}
    \newtheorem{theorem}{Theorem}[section]
    \newtheorem{lemma}[theorem]{Lemma}
    \newtheorem{proposition}[theorem]{Proposition}
    \newtheorem{conjecture}[theorem]{Conjecture}
\theoremstyle{definition}
    \newtheorem{remark}[theorem]{Remark}
    \newtheorem{example}[theorem]{Example}
\newcommand{\aronly}[1]{#1}
\newcommand{\jonly}[1]{}
\newcommand\abs[1]{\ensuremath{\left\lvert#1\right\rvert}}
\newcommand\babs[1]{\ensuremath{\bigl\lvert#1\bigr\rvert}}
\DeclareMathOperator{\cl}{cl}
\DeclareMathOperator{\rk}{rk}
\newcommand{\R}{\ensuremath{\mathbb{R}}}
\newcommand{\Z}{\ensuremath{\mathbb{Z}}}
\newcommand{\Q}{\ensuremath{\mathbb{Q}}}
\renewcommand{\ge}{\geqslant}
\renewcommand{\geq}{\geqslant}
\renewcommand{\le}{\leqslant}
\renewcommand{\emptyset}{\varnothing}
\def\t{\widetilde}
\def\Int{\mathop{\fam0 Int}}
\newcommand{\setenumi}{
    \topsep=0mm
    \partopsep=0mm
    \parsep=0mm
    \itemsep=0mm
    \leftmargin=0pt
    \listparindent=7mm
    \itemindent=3mm
    \labelsep=2mm
    \labelwidth=-5mm
    \usecounter{enumi}
    }
\newenvironment{remarkenumi}
    {\begin{list}{\labelenumi}{\setenumi}}
    {\end{list}}
\renewcommand{\theenumi}{\alph{enumi}}
\renewcommand{\labelenumi}{(\alph{enumi})}
\newcounter{mcnt}
\renewcommand{\themcnt}{(\alph{mcnt})}
\newcommand{\cnt}{\addtocounter{mcnt}{1}\themcnt\ \nopagebreak}
\newcommand{\juststep}{\addtocounter{mcnt}{1}\themcnt}
\begin{document}

\title{A quadratic estimation \\ for the K\"uhnel conjecture on embeddings}

\author{Slava Dzhenzher and Arkadiy Skopenkov}

\date{}

\begin{abstract}
    The classical Heawood inequality states that if the complete graph $K_n$ on $n$ vertices is embeddable into the sphere with $g$ handles, then
    $g\geqslant\dfrac{(n-3)(n-4)}{12}$. 
    A higher-dimensional analogue of the Heawood inequality is the K\"uhnel conjecture.
    In a simplified form, it states that \emph{for every integer $k>0$ there is $c_k>0$ such that if the union of $k$-faces of $n$-simplex embeds into the connected sum of $g$ copies of the Cartesian product $S^k\times S^k$ of two $k$-dimensional spheres, then $g\geqslant c_k n^{k+1}$}.
    For $k>1$ only linear estimates were known. 
    We present a quadratic estimate $g\geqslant c_kn^2$. 
    The proof is based on the beautiful and fruitful interplay between geometric topology, combinatorics, and linear algebra. 
\end{abstract}

\thanks{Both authors: Moscow Institute of Physics and Technology. 
\emph{A. Skopenkov:} Independent University of Moscow; corresponding author; \texttt{askopenkov@gmail.com}
\newline
We are grateful to R.~Fulek, T. Garaev, R.~Karasev, E.~Kogan, S.~Melikhov, R.~Nikkuni, S.~Zhilina, and the anonymous referees for useful discussions.}

\maketitle

\noindent
{\em MSC 2020}: 57Q35, 15B99, 68R99, 55N45.

\emph{Keywords:} embedding, K\"uhnel conjecture on embeddings, Heawood inequality, homology group, join of complexes, rank of matrix. 

\tableofcontents

\section{Introduction and main result}\label{s:intr}

\subsection*{Main result}

The classical Heawood inequality states that if the complete graph $K_n$ on~$n$ vertices is embeddable into the sphere with $g$ handles, then
\[
    g \geqslant \frac{(n-3)(n-4)}{12}.
\]
Denote by

\begin{itemize}[nosep]
    \item $\Delta^k_n$ the union of the $k$-dimensional faces of an $n$-dimensional simplex; 

    \item $S_g^{2k}$ the connected sum of $g$ copies of the Cartesian product $S^k\times S^k$ of two $k$-dimensional spheres.
\end{itemize}

A higher-dimensional analogue of the Heawood inequality is the K\"uhnel conjecture \ref{c:kuh} on embeddings.
In a simplified form, it states that \emph{for every integer $k>0$ there is $c_k>0$ such that if $\Delta^k_n$ embeds into $S_g^{2k}$, then}
\[
    g \geqslant c_k n^{k+1}.
\]
We present a quadratic in $n$ estimate (Theorems~\ref{t:quadr:complex} and~\ref{t:quadr:join}). 
 
\smallskip 
\emph{Notation and conventions.}
From now on we shorten `$s$-dimensional' to just `$s$-'.

In this text, a manifold may have a non-empty boundary.
 
For a simple definition of a homology group $H_k(\cdot;\Z_2)$ accessible to non-specialists in topology see \cite{HG}, \cite[\S6, \S10]{Sk20}.
For a $2k$-manifold $M$ let 
\[
    \beta_k(M):=\dim H_k(M;\Z_2).
\]
(This is called the \emph{$k$-th mod~$2$ Betti number} of $M$. 
Observe that $\beta_1(S_g^2)=2g$.) 

In this text remarks and references to them could be ignored by some readers, but could be important for others; it is for a reader to choose whether to read a remark.

We consider only compact piecewise linear (PL) $2k$-manifolds \cite{RS72}. 
Unless otherwise specified, we consider only PL maps. 
Thus we omit `PL' from statements, definitions and proofs (except when topological embeddings are around).
The analogues of our results are correct for topological embeddings (Remark~\ref{r:hist:complex}.\ref{en:r-histc-top}), for almost embeddings and for $\Z_2$-embeddings (defined and discussed in Remarks~\ref{r:alem} and~\ref{r:emb-mod-2}). 

\begin{theorem}[Skeleton]
\label{t:quadr:complex}
    %Take any (fixed) integer $k\geqslant1$.
    If $\Delta^k_n$ embeds into a $2k$-manifold~$M$, then 
    \[
        \beta_k(M)\gtrsim\frac{n^2}{2^k(k+1)^2}\quad\text{as}\quad n\to \infty\quad\text{for fixed}\quad k \ge 1
    \]
    (more precisely, $\geqslant \frac{(n-4k-2)^2}{2^k(k+1)^2}$ for $n\geqslant5k+3$).
\end{theorem}

Denote $[n] := \{1,\ldots,n\}$.

Let $[n]^{*k+1}$ be the $k$-complex with vertex set $[k+1]\times[n]$, in which every $k+1$ vertices from different rows span a $k$-face.   
For $k=1$ this is the complete bipartite graph $K_{n,n}$.
For a geometric interpretation see \cite[Proposition~4.2.4]{Ma03}.

\begin{theorem}[Joinpower]
\label{t:quadr:join}
    %Take any (fixed) integer $k\geqslant1$.
    If $[n]^{*k+1}$ embeds into a $2k$-manifold~$M$, then 
    \[
        \beta_k(M)\gtrsim\frac{n^2}{2^k} \quad\text{as}\quad n\to\infty
        \quad\text{for fixed}\quad k \ge 1
    \]
    (more precisely, $\geqslant \frac{(n-3)^2}{2^k}$ for $n\geqslant4$).
\end{theorem}
 
Theorem~\ref{t:quadr:complex} (Skeleton) follows from Theorem~\ref{t:quadr:join} (Joinpower) because\footnote{Observe that vice versa $K := \bigl[n\bigr]*\bigl[\binom{n}{2}\bigr]*\ldots*\bigl[\binom{n}{k+1}\bigr]$ contains a subdivision of $\Delta^k_n$ (here $K$ is a complex with set $1 \times \bigl[n\bigr] \sqcup 2 \times \bigl[\binom{n}{2}\bigr] \sqcup \ldots \sqcup (k+1) \times \bigl[\binom{n}{k+1}\bigr]$ of vertices; its $k$-faces are $\{(i,a_i)\}_{i\in[k+1]}$ for $a_i\in \bigl[\binom{n}{i}\bigr]$).
In order to prove this take a baricentric subdivision of $\Delta_n^k$.
Then every vertex $(i,a_i)$ of $K$ corresponds to the barycenter of some $i$-face of $\Delta_n^k$.}
$\Delta^k_n\supset [s]^{*k+1}$ for some $s\geqslant\dfrac{n-k+1}{k+1}$.
Theorem~\ref{t:quadr:join} in turn follows from Theorems~\ref{t:embed}, \ref{t:lowrank} below.

For a $2k$-manifold $M$ into which $\Delta_n^k$ can be embedded, the linear in $n$ estimate $\beta_k(M)\geqslant\dfrac{n-2k-1}{k+1}$ is proved in \cite{PT19}\footnote{See \cite{KS21} for a simpler \cite[Remark~1.2.e]{KS21} exposition of this estimation. 
Observe that the paper \cite{PT19} contains another result mentioned in Remark \ref{r:hist:complex}.\ref{general}.} 
after a weaker linear estimate of \cite{GMP+}.  
So even \emph{linear} estimates appeared more than 20 years after the K\"uhnel conjecture \ref{c:kuh} on embeddings was put forward, and are published in good journals.
For 6 years after the linear estimate in \cite{GMP+}, and before the current paper, no stronger estimates appeared \cite[\S4]{Ku23}. 
 
There is another interesting approach to the K\"uhnel conjecture \ref{c:kuh} on embeddings: the papers \cite{Ad18, AP24} jointly claim it (without explicitly stating this in \cite{AP24}). 
%description
In \aronly{\S\ref{s:adipa}}\jonly{\cite[\S6]{DS22}} we justify that the realization of this approach in \cite{Ad18, AP24} is not reliable up to the standards of refereed journals, by  specific critical \aronly{Remarks \ref{r:ggks}, \ref{r:helpadi} and \ref{r:histc-adpa}}\jonly{remarks \cite[Remarks~6.2, 6.5, 6.7]{DS22}}\footnote{We welcome public statements of different reliability standards \cite{Sk21d}. 
So far none have appeared, see Remark~\ref{r:ap-let}. 
\newline
Because of a referee report we received, we have to recall that arXiv version 2 of the current paper (where the proof is complete, or at least not criticized) appeared on September~1, 2022, earlier than \cite{AP24}.}. 

For $k=1$ Theorem~\ref{t:quadr:complex} (Skeleton) (and the linear estimates above) follows from the Heawood inequality at the beginning of \S\ref{s:intr}, and Theorem~\ref{t:quadr:join} (Joinpower) is also due to Heawood.
The usual proof of the Heawood inequality via Euler inequality does not work for $k>1$ because a $k$-sphere does not split $\R^{2k}$.  

\begin{conjecture}[The K\"uhnel conjecture on embeddings]\label{c:kuh} 
\cite[Conjecture~B]{Ku94} \cite[Conjecture~22]{Ku23}.
If $\Delta_n^k$ embeds into a $(k-1)$-connected closed $2k$-manifold $M$, then 
$$\displaystyle \binom{2k+1}{k+1} \babs{\chi(M)-2} \geqslant \binom{n-k-1}{k+1}.$$ 
       %(the generalized Heawood inequality).
\end{conjecture}

\begin{remark}\label{r:hist:complex}
    % \hspace{0pt}
    \begin{remarkenumi}
        \item\label{en:r-hist-beta} Different authors have considered stronger conjectures, in which $M$ is not $(k-1)$-connected, and $\babs{\chi(M)-2}$ is replaced either by $\beta_k(M)$ \cite{GMP+, PT19}, or by the \emph{$k$-th rational/integer Betti number}  
    $$b_k(M):=\dim H_k(M;\Q)=\rk H_k(M;\Z)$$ 
    (see \aronly{Conjecture~\ref{c:kuh-simp} and Remark~\ref{r:helpadi}}\jonly{\cite[Conjecture~6.3 and Remark~6.5]{DS22}}).
    We have 

    \begin{itemize}[nosep, leftmargin=10mm]
        \item $\beta_1(M)=b_1(M)=2-\chi(M)$ for a closed connected $2$-manifold $M$,

        \item %more generally  
        $\beta_k(M)=b_k(M)=\babs{\chi(M)-2}$ for a closed $(k-1)$-connected $2k$-manifold $M$, and  
        %\cite[after Theorem~2]{Ku94}.  

        \item $\beta_k(M)\ge b_k(M)$ by the Universal Coefficients Formula, see e.g. \cite[\S15.5]{FF89} \cite[Theorem~11.8.1]{Sk20}.
    \end{itemize}
     
    \item\label{en:r-histc-top} 
    In the K\"uhnel conjecture on embeddings the PL and the topological embeddability are equivalent for $k\ge3$ by Remark~\ref{r:alem} (or by the PL approximation theorem \cite[Theorem~1]{Br72}; in fact, weaker `metastable' versions of this result, which are cited in \cite{Br72} are sufficient; recall that we consider topological embeddings into PL manifolds).
    See also the K\"uhnel conjecture \aronly{\ref{c:kuh-simp}}\jonly{\cite[Conjecture~6.3]{DS22}} for simplicial embeddings and \aronly{Remark~\ref{r:helpadi}}\jonly{\cite[Remark~6.5]{DS22}}. 

       \item\label{general} 
        For a short description of references on embeddability of general $k$-complexes into $2k$-manifolds see \cite[Remark 1.1.7.bc]{Sk24}. 
        There are algebraic criteria for such embeddability,  
        %, in terms of the intersection form of $2k$-manifolds. The criteria are 
        due to Harris-Krushkal-Johnson-Pat\'ak-Tancer-Skopenkov, see \cite[\S1.1, \S1.3]{Sk24} and the references therein. 
        Theorems~\ref{t:quadr:complex} and~\ref{t:quadr:join} are non-trivial in spite of the existence of these criteria and the last paragraph of Remark \ref{r:idea}.\ref{en:r-idea-nontriv}. 
        
        The criteria from \cite{FK19}, \cite[\S1.1]{Sk24} show that such embeddability is closely related to the low rank matrix completion problem (and thus to the Netflix problem from machine learning), see the references in \cite[beginning of \S1.1]{Sk24}. 
        This is the problem of minimizing the rank of a matrix, of which entries some are fixed, and the others can be changed (see~\cite{DGN+} for references and an introduction accessible to students).
        Our proof of Theorem~\ref{t:quadr:join} (Joinpower) is also related to this problem. 
        We study a more general problem, in which instead of knowing specific matrix elements, we know linear relations on such elements.
        We estimate the minimal rank of matrices with such relations (Theorem~\ref{t:lowrank}).

        \item\label{en:r-hist-asym}
        (The asymptotic version of the K\"uhnel conjecture on embeddings.)
        \emph{Under the conditions of Conjecture~\ref{c:kuh} for any $k\ge1$ there is $c_k>0$ such that}
        \[
            b_k(M) \gtrsim c_k n^{k+1} \quad\textit{as}\quad n\to\infty. 
        \]
        Note that \(\beta_k(M) \geq b_k(M)\), see~(\ref{en:r-hist-beta}).

       \item\label{en:r-histc-wrong} 
We believe that even the asymptotic version~(\ref{en:r-hist-asym})
% of the K\"uhnel conjecture \ref{c:kuh} on embeddings
is wrong or hard to prove. 
Indeed, the integer version of the obstruction to embeddability constructed in this paper (Theorem~\ref{t:basis-emb}) is presumably complete for $k\ge3$ \cite[Conjecture~1.7.b]{SS23} (the obstruction is complete for $\Z_2$-embeddability, see Remark \ref{r:emb-mod-2}.\ref{en:r-emb-mod-2-def} and \cite[Theorem~1.5]{SS23}).

\aronly{\item For another K\"uhnel conjectures of 1994 see \cite[Conjecture~C]{Ku94}, \cite[Conjecture~B]{Ku95}, \cite{Ku23}. 

\item Shortened versions of this paper were rejected from Intern. Math. Res. Notices upon incompetent reports, see \cite[Examples 6.7 and 6.8]{Sk21d}. 
This is some evidence that the results of this paper are interesting and reliable.} 
\end{remarkenumi}
\end{remark}

%was proved in 2009 \cite[Theorem~4.4 and inequality~(11) before]{NS09}.  
%в Ku95 гипотезы А,В сформулированы где-то после теорем 4.5 и 4.6, а С ещё дальше.

\subsection*{Topological and linear algebraic parts}
 
Our theoretical achievement allowing us to prove Theorems \ref{t:quadr:complex} and \ref{t:quadr:join} is to fit what we can prove in topology to what is sufficient for algebra. 
Thus our main idea is the notion of an $(n,k)$-matrix, whose definition is postponed until after Remark~\ref{rem:inters:2man}. 
Before we introduce the definition, we show how it works.
Theorem~\ref{t:quadr:join} (Joinpower) is implied by the following Theorems~\ref{t:embed} (Embeddability) and~\ref{t:lowrank} (Low Rank).
Thus the proof is split into two independent parts. 

\begin{theorem}[Embeddability; proved in \S\ref{s:intr} below]
\label{t:embed}
    If $[n]^{*k+1}$ embeds into a $2k$-manifold $M$, then there is an $(n,k)$-matrix of rank at most $\beta_k(M)$.
\end{theorem}

\begin{theorem}[Low Rank; proved in \S\ref{s:proof-induct}]\label{t:lowrank}
    For $n\geqslant4$ the rank of any $(n,k)$-matrix is at least $(n-3)^2/2^k$.
\end{theorem}

%For $k=1$ both results were proved in \cite{FK19} (implicitly for Theorem~\ref{t:embed}).

Denote by $\cap_M$ the mod~$2$ algebraic intersection of $k$-cycles 
%(for $k=1$, of closed curves) 
on a $2k$-manifold $M$; for a simple definition accessible to non-specialists in topology, see \cite[\S2]{IF}, \cite[\S10]{Sk20}.

Denote by $\oplus$ the mod~$2$ sum of sets.

\begin{remark}\label{rem:inters:2man}
    Here we motivate by low-dimensional examples the definition of an $(n,1)$-matrix (to be introduced later), and Theorem~\ref{t:embed} (Embeddability).

Denote by $K_{n,n}$ the complete bipartite graph with parts $[n]$ and $[n]':=\{k'\ :\ k\in[n]\}$.  
    Let $M$ be a $2$-manifold, and $f\colon [n]^{*2}=K_{n,n}\to M$ a map.
%    For $a \in [n]$ let $a,a'$ be vertices of $K_{n,n}$ from different parts.
    For $2$-element subsets $P_1 = \{a,b\}$ and $P_2 = \{u,v\}$ of $[n]$ denote by $P = P_1 * P_2 := au'bv'$ the (set of edges of the) cycle of length $4$ in $K_{n,n}$.
    % (To the cycle $P$ corresponds a \emph{rectangle} $P_1 \times P_2 \subset [n]^2$.)
    For such cycles $P,Q$ denote
    \[
        A_{P,Q} = A(f)_{P,Q} := fP \cap_M fQ\in\Z_2.
    \]
    The obtained square matrix $A$ is symmetric.
    The matrix $A$ is the Gram matrix (with respect to $\cap_M$) of some homology classes in $H_1(M;\Z_2)$. 
    Hence $\dim H_1(M;\Z_2)\geqslant\rk A$.
    
    If $f$ is an embedding then the following properties hold for any cycles $P,Q \subset K_{n,n}$ of length~$4$ (for additivity it is not even required that $f$ is an embedding):
     
    (\emph{independence})
    $A_{P,Q} = 0$ if $P$ and $Q$ are vertex-disjoint;
        
    (\emph{additivity})
    $A_{P,Q}=A_{X,Q} + A_{Y,Q}$ if\aronly{\footnote{The condition $P = X \oplus Y$ in additivity means that there is $i \in [2]$ such that $P_i = X_i \oplus Y_i$ and $P_{3-i} = X_{3-i} = Y_{3-i}$. Additivity holds, for example, for the cycles $X = [2]*\{u,v\}$, $Y = [2]*\{u,w\}$ and $P = [2] * \{v,w\}$.}} $X,Y \subset K_{n,n}$ are cycles of length $4$ and $P=X\oplus Y$;
        
    (\emph{non-triviality})
    % if $P,Q,P',Q'$ are cycles of length $4$ in a subgraph $G$ homeomorphic to $K_{3,3} \subset K_{n,n}$, and $P\cap Q=P'\cap Q'$ is an edge $e \in G$, then $S_{G,e}A := A_{P,Q} + A_{P',Q'} = 1$;
    % in other words,
    % \[
    %     S_{K_{3,3}, 11'}A = A_{\{1,2\} * \{1,2\}, \, \{1,3\} * \{1,3\}} + A_{\{1,2\} * \{1,3\}, \, \{1,3\} * \{1,2\}} = 1.
    % \]
    if $\{P,Q\},\{P',Q'\}$ are the two different unordered pairs of cycles of length $4$ in $K_{3,3} \subset K_{n,n}$ such that $P\cap Q=P'\cap Q'$ is the edge $11'$, then $SA := A_{P,Q} + A_{P',Q'} = 1$;
    in other words,
    \[
        SA = A_{\{1,2\} * \{1,2\}, \, \{1,3\} * \{1,3\}} + A_{\{1,2\} * \{1,3\}, \, \{1,3\} * \{1,2\}}
        = A_{11'22', \, 11'33'} + A_{11'23', \, 11'32'}
        = 1.
    \]
    Independence and additivity clearly follow from properties of the mod~$2$ algebraic intersection of $1$-cycles. 
    Non-triviality is a reformulation of \cite[Lemma~17]{FK19}, and is a version of the following result\jonly{ \cite[Satz 5]{vK32}, \cite[Lemma~7.1]{DS22}}: 
    
    \emph{for any general position map of $K_{3,3}$ in the plane there is an odd number of intersection points of images of vertex-disjoint edges} (cf. \cite[Remark~1.3]{KS21}).
    
    \aronly{This result is proved in \cite[Satz 5]{vK32} (for more general case; see an alternative proof as proof of Lemma~\ref{lem:vanKampen}), and is rediscovered in the Kleitman 1976 paper cited in \cite[\S5]{FK19}.}

    A symmetric, independent, additive, non-trivial matrix, whose rows correspond to cycles of length~$4$ in $K_{n,n}$, is called an \emph{$(n,1)$-matrix}.

\end{remark}

Now we move on to the definition of an $(n,k)$-matrix.
        
A \textbf{\emph{$k$-octahedron}} is the set of $k$-faces of a subcomplex (of $[n]^{*k+1}$) isomorphic\aronly{\footnote{\label{f:paral}A $k$-octahedron is uniquely defined by a \emph{parallelepiped} $P_1 \times \ldots \times P_{k+1} \subset [n]^{k+1}$. 
So below one may work with parallelepipeds instead of $k$-octahedra.
% assume that rows of $A$ and columns of $A$ correspond to parallelepipeds. 
This is more convenient for formal statements (because parallelepipeds are simpler than $k$-octahedra), but less convenient for topological motivations.
Everything that is said on the language of $k$-octahedra can be said in the dual language of parallelepipeds.}}
to $[2]^{*k+1} \cong S^k$.
For $2$-element subsets $P_1,\ldots,P_{k+1}\subset[n]$ such a subcomplex
\[
    P = P(P_1, \ldots, P_{k+1}) = P_1 * \ldots * P_{k+1}
\]
is defined by the set $1 \times P_1 \sqcup \ldots \sqcup (k+1) \times P_{k+1}$ of its vertices.  
Its $k$-faces $a_1*\ldots *a_{k+1}$, $a_i \in P_i$, are spanned by vertices $(i,a_i)$.
% are $a_1*\ldots *a_{k+1}:=\{(i,a_i)\}_{i\in[k+1]}$ for $a_i \in P_i$.

We consider only matrices with entries in $\Z_2$.
The matrices are square matrices, symmetric, whose rows and whose columns correspond to all $k$-octahedra\aronly{\footnote{Such matrix is a block matrix of size $\binom{n}{2}$, where each block is a block matrix of size $\binom{n}{2}$, etc.}}, unless otherwise specified.

For a $2k$-manifold $M$, a map $f\colon[n]^{*k+1}\to M$, and $k$-octahedra $P,Q$ denote
\[
    A(f)_{P,Q} := fP \cap_M fQ\in\Z_2.
\]
The obtained matrix $A(f)$ is symmetric.
The definition of an $(n,k)$-matrix spells out the properties of $A(f)$ sufficient for the proofs of Theorems \ref{t:embed} and \ref{t:lowrank}.  
%If $f$ is an embedding then the matrix $A(f)$ has the following three properties (which define an $(n,k)$-matrix).

A matrix $A$ is said to be an \textbf{\emph{$(n,k)$-matrix}} if it has the following 
properties of independence, additivity, and non-triviality.

A matrix $A$ is said to be \textbf{\emph{independent}} if for any $k$-octahedra $P,Q$
\[
    \text{$A_{P,Q} = 0$ if $P$ and $Q$ are vertex-disjoint}.
\]
Clearly, $A(f)$ is independent if $f$ is an embedding.

A matrix $A$ is said to be \textbf{\emph{additive}} if for any $k$-octahedra $P,Q$
\[
    \text{$A_{P,Q} = A_{X,Q} + A_{Y,Q}$ if $P = X \oplus Y$ for some $k$-octahedra $X,Y$}.
\]
The additivity\aronly{\footnote{For additivity it is not even required that $f$ is an embedding. The condition $P = X \oplus Y$ in additivity holds, for example, for $k$-octahedra $X, Y$ such that $X_j = Y_j$ for $j \neq i$, and $\abs{X_i \cap Y_i} = 1$, for some $i \in [k+1]$.
Then $P = X_1 * \ldots * X_{i-1} * (X_i \oplus Y_i) * X_{i+1} * \ldots * X_{k+1}$.
Presumably there are no other octahedra such that $P = X \oplus Y$.}} of $A(f)$ holds since the mod~$2$ intersection $\cap_M$ distributes over the mod~$2$ summation of $k$-cycles on $M$.

We shorten $\{1\}^{*k+1}$ to $1^{*k+1}$.

% \textbf{Two $k$-octahedra in $[3]^{*k+1}$ are said to be \emph{complementary} if their intersection is $1^{*k+1}$.
% %share a common $k$-face $3^{*k+1}$ and their $k$-faces opposite to $3^{*k+1}$ have no common vertices.
% The matrix $A$ is said to be \textbf{\emph{non-trivial}} if $SA=1$, where $SA$ is the sum of $A_{P,Q}$ over all unordered pairs $\{P,Q\}$ of complementary $k$-octahedra.}

A matrix $A$ is said to be \textbf{\emph{non-trivial}} if $SA=1$, where $SA$ is the sum of $A_{P,Q}$ over all unordered pairs $\{P,Q\}$ of $k$-octahedra from $[3]^{*k+1}$ such that $P\cap Q=1^{*k+1}$.
(The sum is meaningful since $A_{P,Q}=A_{Q,P}$.) 

As an example we give explicit formulas for $SA$ (which are not used later).
Denote $\overline x := \{1,x\}$ for $x \in \{2,3\}$. 
Then 
\begin{align*}
    SA &= A_{\{1,2\},\,\{1,3\}} = A_{\overline 2, \overline 3}, \qquad&\text{$k=0$}; \\
    SA &= A_{\overline 2 * \overline 2, \, \overline 3 * \overline 3} + A_{\overline 2 * \overline 3, \, \overline 3 * \overline 2}, \qquad&\text{$k=1$}; \\
    SA &=
    A_{\overline 2 * \overline 2 * \overline 2, \, \overline 3 * \overline 3 * \overline 3} +
    A_{\overline 2 * \overline 3 * \overline 2, \, \overline 3 * \overline 2 * \overline 3} + 
    % {}&\\ &{}+
    A_{\overline 3 * \overline 2 * \overline 2, \, \overline 2 * \overline 3 * \overline 3} +
    A_{\overline 2 * \overline 2 * \overline 3, \, \overline 3 * \overline 3 * \overline 2},\;&\text{$k=2$}.
\end{align*}

Now the reader can read the proof of Theorem~\ref{t:lowrank} (Low Rank) in \S\ref{s:proof-induct}.

\begin{lemma}[Embedding]\label{lem:non-triv}
     For any embedding $f\colon [n]^{*k+1}\to M$ to a $2k$-manifold $M$ the matrix $A(f)$ is an $(n,k)$-matrix.
\end{lemma}

In this lemma the additivity and the independence are obvious and are already proved after their definitions. 
The non-triviality is harder and is proved in \S\ref{s:proof-nontr}.

\begin{proof}[Proof of Theorem~\ref{t:embed}]
    Take any embedding $f\colon [n]^{*k+1}\to M$.
    The matrix $A(f)$ is an $(n,k)$-matrix by Lemma~\ref{lem:non-triv}.
    Also, the matrix $A(f)$ is the Gram matrix (with respect to $\cap_M$) of some homology classes in $H_k(M;\Z_2)$. 
    Hence $\rk A(f)\le\beta_k(M)$ by the following well-known result.  
    %(see a proof e.g. in \cite[Lemma 2.1]{Bi21}).
    
    \emph{Let $v_1, v_2, \ldots, v_r$ be vectors in a linear space $V$ over $\Z_2$ with a bilinear symmetric product.
    Then the rank of the Gram matrix of $v_1,v_2,\ldots, v_r$ does not exceed $\dim V$.}
\end{proof}

The following theorem is a stronger version of Theorem~\ref{t:embed} (Embeddability).
    
\begin{theorem}\label{t:basis-emb}
Let $M$ be a closed $2k$-manifold. 
Let $\Omega_M$ be

\begin{itemize}[nosep, leftmargin=5mm]
    \item the identity matrix of size $\beta_k(M)$ if there is $x\in H_k(M;\Z_2)$ such that $x\cap_Mx=1$, and 

    \item the direct sum of $\beta_k(M)/2$ hyperbolic matrices $\begin{pmatrix} 0 & 1 \\ 1 & 0 \end{pmatrix}$, otherwise (it is known that $\beta_k(M)$ is even in the `otherwise' case for closed manifolds).  
\end{itemize}

If $[n]^{*k+1}$ embeds into $M$, then there is a $\beta_k(M)\times\binom{n}{2}^{k+1}$-matrix $Y$ such that $Y^T\Omega_MY$ is an $(n,k)$-matrix.
\end{theorem}
 
\begin{proof}
    Let $f\colon[n]^{*k+1}\to M$ be an embedding.
    By \cite[Theorems 3, 4 and 6]{AA38} 
    %Theorem 3 = Theorem 4
    (see also \cite[Theorem 1]{MW69} and \cite[Theorem~6.1]{IF}) there is a basis in $H_k(M;\Z_2)$ in which the matrix of $\cap_M$ is $\Omega_M$.
    Let $Y$ be the $\beta_k(M) \times \binom{n}{2}^{k+1}$-matrix whose columns are coordinates of $k$-octahedra in this basis.
    Then $Y^T\Omega_MY = A(f)$ is an $(n,k)$-matrix by Lemma~\ref{lem:non-triv} (Embedding).
\end{proof}

%%%%%%%%%%%%%%%%%%%%%%%%%%%%%%%%%
\section{Improvements, idea of proof and corollaries}\label{s:imprem}

\begin{remark}[On almost embeddings]\label{r:alem}
\begin{remarkenumi}
\item\label{en:r-alem-def} For a complex $K$ and any space $M$ a map $f\colon K\to M$ is called an \emph{almost embedding} if $f\sigma \cap f\tau=\emptyset$ for any vertex-disjoint faces $\sigma,\tau$. 
See some motivations in \cite[Remark~6.7.5]{Sk}. 
Clearly, the property of being an almost embedding is preserved under sufficiently small perturbations of the map (as opposed to the property of being an embedding). 
Thus by approximation of continuous maps with PL maps we observe that for complex in manifolds 

$\bullet$ topological embeddability implies PL almost embeddability;   

$\bullet$ PL almost embeddability is equivalent to topological almost embeddability.    

\item \emph{For $k\ge3$ almost embeddability of a $k$-complex to a simply-connected $2k$-manifold implies PL embeddability.} 

For $\R^{2k}$ this is due to van Kampen-Shapiro-Wu, 
%proved in \cite{vK32, Sh57, Wu58}, 
and the case of general $2k$-manifolds is analogous as explained in \cite[Proposition~7 for $M=M'$, and \S5, step~3 of proof of Theorem~6]{PT19}, \cite[comments after Theorem~1.2.1]{Sk24}.

The analogue of this result for $k=2$ is false \cite{SSS}, and for $k=1$ is unknown, cf.~\cite{FPS} and the references therein. 

Hence for $k\ge3$ the K\"uhnel conjecture~\ref{c:kuh} on embeddings is equivalent to the analogous conjecture on almost embeddings.  

\item\label{en:r-alem-res} Our topological results (Theorems~\ref{t:quadr:complex}, \ref{t:quadr:join}, \ref{t:embed}, \ref{t:basis-emb}, and Lemma~\ref{lem:non-triv}) hold under the weaker assumption of almost embeddability.  
The arguments do not change.
\end{remarkenumi}
\end{remark}

Denote by $h|_X$ the restriction of a map $h$ to a set $X$.

Denote by $\abs{X}_2 \in \Z_2$ the parity of the number of elements in a finite set $X$.

A map $f\colon K\to M$ to a $2k$-manifold is called a \textbf{general position map} if

\begin{itemize}[nosep]
    \item any vertex-disjoint faces the sum of which dimensions is less than $2k$ have disjoint images,

    \item the restriction of $f$ to any $k$-face has a finite set of self-intersection points,

    \item for any vertex-disjoint $k$-faces $\sigma,\tau$
    \begin{itemize}
        \item[$-$] the set $f\sigma \cap f\tau$ is finite and is disjoint with self-intersections of $f|_\sigma$ and $f|_\tau$,

        \item[$-$] for any point from $f\sigma \cap f\tau$, and some small $(2k-1)$-sphere $S$ centered at this point, the intersections $S\cap f\sigma$ and $S\cap f\tau$ are $(k-1)$-spheres having linking number $\pm1$ in $S$.
    \end{itemize}
\end{itemize}

\begin{remark}[On $\Z_2$-embeddings]\label{r:emb-mod-2}
\begin{remarkenumi}

\item\label{en:r-emb-mod-2-def} Let $M$ be a $2k$-manifold, and $K$ be a $k$-complex. 
    A general position map $f\colon K\to M$ is called a \emph{$\Z_2$-embedding} if $\abs{f\sigma \cap f\tau}$ is even for any vertex-disjoint faces $\sigma,\tau$.
    
    Clearly, any almost embedding 
    (defined in Remark~\ref{r:alem}.\ref{en:r-alem-def}) 
    is a $\Z_2$-embedding.
    Observe that $\Z_2$-embeddability of $k$-complexes in $\R^{2k}$ does not imply almost embeddability, even for $k\ge3$ \cite[Example~3.6]{Me06}.

\item\label{en:r-emb-mod-2-res} Our topological results (Theorems~\ref{t:quadr:complex}, \ref{t:quadr:join}, \ref{t:embed}, \ref{t:basis-emb}, and Lemma~\ref{lem:non-triv}) hold under the weaker assumption of $\Z_2$-embeddability.
The arguments change only in the proof of the independence of $A(f)$, which holds since for any vertex-disjoint $k$-octahedra $P,Q$
\[
A(f)_{P,Q} = fP\cap_M fQ = \sum_{(\sigma,\tau) \in P\times Q} \abs{f\sigma\cap f\tau}_2 = 0.  \]
A converse to the version of Theorem~\ref{t:basis-emb} for $\Z_2$-embeddings is proved in \cite{SS23}. 
It allows to reduce the K\"uhnel conjecture for $\Z_2$-embeddings \cite[Conjecture~1.6.a]{SS23} to a purely algebraic problem.

%Theorem~\ref{t:basis-emb} for $k=1$ was implicitly proved under the weaker assumption of $\Z_2$-embeddability in \cite{FK19}.
 
\item For a proof of non-$\Z_2$-embeddability of graphs to $2$-manifolds the Euler inequality does not work, as opposed to non-embeddability; methods of~\cite{FK19} do work.
\end{remarkenumi}
\end{remark}

\begin{remark}[Idea of proof and its relation to known proofs]\label{r:idea}
    % \hspace{0pt}
    \begin{remarkenumi}
        \item\label{en:r-idea-proof} The cases $k=1$ of all our results are proved in~\cite{FK19} (implicitly except 
        for Theorem~\ref{t:quadr:join} (Joinpower); under the weaker assumption of $\Z_2$-embeddability defined in Remark~\ref{r:emb-mod-2}.\ref{en:r-emb-mod-2-def}).
 
However, our proofs are not higher-dimensional generalizations of the proofs from \cite{FK19}.
We did not succeed in generalizing to higher dimensions an important property
% \footnote{See the $k$-independence and $k$-pretournament properties in arXiv:2208.04188v1. We used the wrong property that every block of a $k$-independent matrix is $(k-1)$-independent. We had a mistake in the proof that some matrix is $k$-pretournament.}
proved and used in \cite[\S\S4--5]{FK19}
% [assumptions of Lemma~12]{FK19}
(the conclusion of Lemma~\ref{l:ind}).
So we discover that the additivity and the independence for $k=1$ imply the property (Lemma~\ref{l:ind}).  
We observe that the matrix $A(f)$ constructed from an embedding $f$ is additive and independent, even in higher dimensions. 
We prove Lemma~\ref{lem:non-triv} (Embedding), cf.~Theorems~\ref{t:embed} and~\ref{t:basis-emb}, and~(\ref{en:r-idea-nontriv}). 
%the idea below. 
We invent the inductive step for Theorem~\ref{t:lowrank} (Low Rank): an $(n,k-1)$-matrix constructed from the given $(n,k)$-matrix is not a submatrix of the latter, but is the sum of two submatrices (see Lemma \ref{l:hered}). 
Since the base $k=1$ is proved in~\cite[\S4]{FK19} only implicitly, we present in~\S\ref{s:proof-base} a detailed and well-structured proof of Theorem~\ref{t:lowrank} for $k=1$.

%Neither of these results is explicitly stated in \cite{FK19}, 
%The proof of Theorem~\ref{t:lowrank} (Low Rank) is by induction on $k$. 
%; the base $k=1$ is also implicitly proved in~\cite[\S4]{FK19}.             
%Our explicit statements for $k=1$ illustrate our new ideas which work for higher dimensions.
         
        \item\label{en:r-idea-nontriv} 
Lemma~\ref{lem:non-triv} for $k=1$ is known \cite[Lemma~17]{FK19} (cf.~the last paragraphs of Remark~\ref{rem:inters:2man}).
This case $k=1$ is easily reduced to a result on linking of points on the circle. 
For higher dimensions the corresponding $(2k-1)$-dimensional linking results are cumbersome, see \aronly{Remark~\ref{r:ramsey}}\jonly{\cite[Remark~7.5]{DS22}}, so we use a different approach.

The non-triviality (and the independence) in Lemma~\ref{lem:non-triv} is the analogue of
 \cite[Proposition~16.C2]{PT19}, \cite[Lemma~1.5]{KS21} (and of \cite[Proposition~16.C1]{PT19})
%\cite[Proposition~16.C2 (and~16.C1)]{PT19}, \cite[Lemma~1.5]{KS21} 
for $[n]^{*k+1}$ instead of $\Delta_{2k+2}^k$. 
The difference with the proof from \cite{PT19} is that we give a direct proof instead of reference to a cumbersome cohomological criterion.  
The difference with the proof from \cite{KS21} is that we do not use a reduction to a $(2k-1)$-dimensional linking result.
        
Lemma~\ref{lem:non-triv}, and Theorems~\ref{t:embed},~\ref{t:basis-emb} (and their versions of Remarks~\ref{r:alem}.\ref{en:r-alem-res} and~\ref{r:emb-mod-2}.\ref{en:r-emb-mod-2-res}) could be deduced from \cite[Lemma 2.3.1.a]{Sk24} using Lemma~\ref{l:comb} (Combinatorial), see the deductions in
\aronly{Remark~\ref{r:deduc-emb-ks21e}.\ref{en:r-deduc-emb-ks21e:ded-t}\ref{en:r-deduc-emb-ks21e:ded-lem}}\jonly{\cite[Remark~7.2.bc]{DS22}}.
The independence and the additivity in Lemma~\ref{lem:non-triv} are trivial (and so are easier than the deductions). 
We present in~\S\ref{s:proof-nontr} a direct proof of the non-triviality from Lemma~\ref{lem:non-triv}, because such a proof is not very much longer than \aronly{Remark~\ref{r:deduc-emb-ks21e}.\ref{en:r-deduc-emb-ks21e:ded-lem}}\jonly{\cite[Remark~7.2.c]{DS22}}, and because the paper \cite{Sk24} is unpublished.
\end{remarkenumi}
\end{remark}

\begin{remark}[Corollaries]
    % \hspace{0cm}
    \begin{remarkenumi}
        \item
% Corollaries of Theorem~\ref{t:quadr:complex} (or rather of its stronger form for almost embeddings defined before Remark~\ref{r:alem}) are improved Radon type and Helly type results \cite[Theorem~2 and Corollary~3]{PT19} for set systems in $2k$-manifolds.
In \cite{PT19} the improved Radon type and Helly type results \cite[Theorem~2 and Corollary~3]{PT19} are deduced from the linear estimate \cite[Theorem~1]{PT19}.
The results below can be deduced analogously from the version (stated in Remark~\ref{r:alem}.\ref{en:r-alem-res}) of Theorem~\ref{t:quadr:complex} (Skeleton) for almost embeddings.

\emph{Let $M$ be a $2k$-manifold and $r \geqslant (k+1)2^{k-1}\sqrt{\beta_k(M)} + 4k + 4$.}

Radon type result.
\emph{Let \( \cl\colon2^M \to 2^M \) be a closure operator. Let \( P \subset M\) be an $r$-element subset such that $\cl S$ is (topologically) $k$-connected for every subset $S \subset P$ of size at most $k+1$.
Then there are two disjoint subsets $P_1, P_2 \subset P$ such that \( \cl P_1 \cap \cl P_2 \neq \emptyset \)}.

Helly type result. 
\emph{Take a finite family of subsets of $M$ such that}
        \begin{itemize}
            \item \emph{the intersection of any proper subfamily is either empty or $k$-connected;}
            
            \item \emph{the intersection of any $r$-element subfamily is nonempty.}
        \end{itemize}
\emph{Then the intersection of all members of the family is nonempty.}

% Deduction of this result from Theorem~\ref{t:quadr:complex} (Skeleton) is analogous to deduction of \cite[Theorem~2 and Corollary~3]{PT19} from the linear estimate \cite[Theorem~1]{PT19}.
        
        \item Theorem~\ref{t:quadr:complex} (Skeleton) in a standard way gives lower estimation of \emph{crossing number} of $\Delta^k_n$.
        Given a general position map $\Delta^k_n \to \R^{2k}$ with minimal number of crossings, one eliminates any crossing by adding handle $S^k \times S^k$.
        So the crossing number of $\Delta^k_n$ is equal to the number of added handles, which is at least $\frac{(n-4k-2)^2}{2^k(k+1)^2}$ for $n\geqslant5k+3$ by~Theorem~\ref{t:quadr:complex} (Skeleton). 
        %https://arxiv.org/pdf/1908.04654.pdf
    \end{remarkenumi}
\end{remark}

\section{The inductive step of Theorem~\ref{t:lowrank} (Low Rank)}\label{s:proof-induct}

In order to grasp the main idea the reader may first check the following proofs for $k=2$.
% For any $l \in \{k-1,k\}$ denote by $G_l$ the set of unordered pairs of $l$-octahedra from $[3]^{*l+1}$ whose intersection is $1^{*l+1}$.

Proof of Theorem~\ref{t:lowrank} is by induction.
The base $k=1$ is proved implicitly in \cite[\S4]{FK19} and explicitly in~\S\ref{s:proof-base}
(for the idea of the proof see Remark~\ref{r:idea}.\ref{en:r-idea-proof}).

Denote by
\[
    \Z_2^{\binom{n}{2}^{k+1} \times \binom{n}{2}^{k+1}}
\]
the set of matrices whose rows and columns are numerated by $k$-octahedra in $[n]^{*k+1}$.

Take any $A \in \Z_2^{\binom{n}{2}^{k+1} \times \binom{n}{2}^{k+1}}$.
For each $2$-element subsets $U,V \subset [n]$ define \emph{the $k$-coordinate block}
\[
    A_{U,V} \in \Z_2^{\binom{n}{2}^k \times \binom{n}{2}^k}\quad\text{by}\quad
    (A_{U,V})_{P,Q} := A_{U * P,\,V * Q}\quad\text{for $(k-1)$-octahedra}\quad P,Q.
\]

Denote $\overline x := \{1, x\}$ for $x > 1$.

\begin{lemma}[Heredity; proved below]\label{l:hered}
    Suppose that $n \geqslant 4$, $k \geqslant 1$ and $A$ is an $(n,k)$-matrix.
    Then $A_{\overline 2,\overline 3} + A_{\overline 3,\overline 2}$ is an $(n,k-1)$-matrix.
\end{lemma}

\begin{proof}[Inductive step $k-1 \to k$ in the proof of Theorem~\ref{t:lowrank}]
    Take an $(n,k)$-matrix~$A$,
    and set $Z := A_{\overline 2,\overline 3} + A_{\overline 3,\overline 2}$. Then
    \[
        \rk A \geqslant
        \frac{1}{2} \left(\rk A_{\overline 2,\overline 3} + \rk A_{\overline 3,\overline 2}\right) \geqslant
        \frac{1}{2} \rk Z \geqslant
        \frac{(n-3)^2}{2^k},
        \qquad\text{where}
    \]
    \begin{itemize}[leftmargin=8mm, nosep]
        \item the first inequality holds since $\rk A \geqslant \rk A_{U,V}$ for any $U,V$,
        
        \item the second inequality holds by subadditivity of rank,
        
        \item the third inequality holds by the induction hypothesis applied to $Z$, which can be applied by Lemma~\ref{l:hered}.
    \end{itemize}
\end{proof}

\begin{proposition}[One-coordinate swap]
\label{pr:oneswap}
    Suppose that $A \in \Z_2^{\binom{n}{2}^{k+1} \times \binom{n}{2}^{k+1}}$ is independent and additive.
    Suppose that two $k$-octahedra $P = P_1 * \ldots *P_{k+1}$ and $Q = Q_1 * \ldots * Q_{k+1}$ `have only one common vertex', i.e. for some $i \in [k+1]$ we have $\abs{P_i \cap Q_i} = 1$, and for any $j \neq i$ we have $P_j \cap Q_j = \varnothing$.
    Then $A_{P,Q} = A_{P',Q}$ for any $k$-octahedron\aronly{\footnote{One may say, $P'$ shares 
    with $Q$ the same common vertex $P_i \cap Q_i$, and intersects $P$ by the cone over common $(k-1)$-octahedron.}} 
    $$P' = P_1 * \ldots * P_{i-1} * P_i' * P_{i+1} * \ldots * P_{k+1}\quad\text{such that}\quad P_i' \cap Q_i = P_i \cap Q_i.$$
\end{proposition}

\begin{proof}
    For $P = P'$ this is a tautology. Otherwise the proposition follows since
    \[
        A_{P,Q} = A_{P',Q} + A_{P\oplus P',Q} = A_{P',Q},\qquad\text{where}
    \]
    \begin{itemize}[nosep]
        \item the first equality holds by the additivity of $A$,

        \item the second equality holds by the independence of $A$, since the $k$-octahedra $Q$ and $P \oplus P' = P_1 * \ldots  * P_{i-1} * (P_i \oplus P_i') * P_{i+1} * \ldots * P_{k+1}$ are vertex-disjoint.
    \end{itemize}
\end{proof}

\begin{proof}[Proof of Lemma~\ref{l:hered} (Heredity)]
    The additivity holds for $Z := A_{\overline 2,\overline 3} + A_{\overline 3,\overline 2}$ since it holds for $A_{\overline 2,\overline 3}$ and $A_{\overline 3,\overline 2}$.

    The independence\aronly{\footnote{The independence does not hold for blocks $A_{\overline 2,\overline 3}$ and $A_{\overline 3,\overline 2}$ alone, only for their sum.}} 
%version~3: Thus the proof of independence (called triviality) in the inductive step in the arXiv version~2 is incorrect. 
holds for $Z$ and $n \geqslant 4$, since for vertex-disjoint $(k-1)$-octahedra $P,Q$
    \[
        A_{\overline 2*P,\, \overline 3*Q} = A_{\overline 4*P,\, \overline 3*Q} = A_{\overline 4 * P,\, \overline 2*Q} = A_{\overline 3*P,\, \overline 2*Q},
    \]
    where each equality holds by Proposition~\ref{pr:oneswap}. 
    %(below).
    
    Since $A$ is symmetric, we have $A_{\overline 2 * P,\,\overline 3 * Q} = A_{\overline 3 * Q,\,\overline 2 * P}$, i.e. $A_{\overline 3,\overline 2} = A_{\overline 2,\overline 3}^T$. 
    Hence $Z = A_{\overline 2,\overline 3} + A_{\overline 2,\overline 3}^T$ is symmetric.

    For any $l \in \{k-1,k\}$ denote by $G_l$ the set of unordered pairs of $l$-octahedra from $[3]^{*l+1}$ whose intersection is $1^{*l+1}$.
    The non-triviality holds for $Z$ since
    \[
        S Z = S A_{\overline 2,\overline 3} + S A_{\overline 3,\overline 2} =
        \sum\limits_{\{P,Q\} \in G_{k-1}}
            (A_{\overline 2 * P,\, \overline 3 * Q} + A_{\overline 3 * P,\, \overline 2 * Q}) =
        \sum\limits_{\{P',Q'\} \in G_k}
            A_{P',Q'} = 1,
    \]
    where the last equality is the non-triviality of $A$.
\end{proof}

% A set of points in $\R^{2k}$ is \emph{in general position} if no $2k+1$ of them lie in a $(2k-1)$-dimensional hyperplane, and the convex hulls of any three pairwise disjoint $(k+1)$-element subsets of the set have no common point.
% For a map $g\colon [3]^{*k+1}\to\R^{2k}$ take a subdivision of $[3]^{*k+1}$ such that $g$ is linear on every simplex of this subdivision.
% Then $g$ is a \emph{general position map} if the set of images of vertices of the chosen subdivision is in general position.

\section{Proof of the non-triviality in Lemma~\ref{lem:non-triv} (Embedding)}\label{s:proof-nontr}

In order to grasp the main idea the reader may first check the following proofs for $k=2$.

\setcounter{mcnt}{0}
\renewcommand{\themcnt}{\text{(\arabic{mcnt})}}

\begin{proof}[Proof of the non-triviality in Lemma~\ref{lem:non-triv} (Embedding)]
    Recall that $\Int M$ is the interior of~$M$.
    Denote by $\partial x$ the boundary of $x$, where $x$ is either a $k$-face or a $2k$-ball.
    We may assume that $n=3$. 
    
    The join \cite[\S4.2]{Ma03} of $s$ non-empty complexes is $(s-2)$-connected \cite[Proposition~4.4.3]{Ma03}. 
    So $[3]^{*k+1}$ is $(k-1)$-connected. 
    Let $L:=([3]^{*k+1})^{(k-1)}$ be the union of all those faces of $[3]^{*k+1}$ whose dimension is less than $k$. 
    %[Use \cite[Lemma 12]{PT19}]
    Then $f|_L$ is null-homotopic. 
We shall use the following Borsuk Homotopy Extension Theorem \cite[\S5.5]{FF89}: 
 %\cite[Proposition 0.16]{Ha01} 

\emph{if $(K,L)$ is a polyhedral pair, $Z\subset\R^m$, \ $F\colon L\times [\,0,1\,]\to Z$ is a homotopy, and $g\colon K\to Z$ is a map such that $g|_L=F|_{L\times0}$, then $F$ extends to a homotopy $G\colon K\times [\,0,1\,]\to Z$ such that $g=G|_{K\times0}$.}
 
Hence $f$ is homotopic to a map $f''\colon[3]^{*k+1}\to M$ such that $f''L$ is a point. 
    Take a $2k$-ball $B\subset\Int M$ such that $B\cap f''[3]^{*k+1} = f''L\in\partial B$. 
    %containing $f''L$ in its boundary. 
    Then $f''$ (and so $f$) is homotopic to a general position map $f'\colon[3]^{*k+1}\to M-\Int B$ such that $f'L\subset\partial B$.
    %(An accurate definition of a general position map is given in \cite[the end of \S1.1]{Sk24}.)
    By general position we may assume that $f'|_L$ is an embedding.\aronly{\footnote{Observe that $f'$ is not necessarily an embedding, almost embedding or a $\Z_2$-embedding. 
    This paragraph is analogous to \cite[Lemma~12]{PT19}, \cite[\S2.3, beginnig of proof of Lemma~2.3.1.a]{Sk24}.}}
    
    Define the map $g\colon [3]^{*k+1}\to B$ to be $f'$ on $L$, and to be the cone map over $f'|_{\partial\sigma}$ with a vertex in $\Int B$ on every $k$-face $\sigma$ of $[3]^{*k+1}$.
    By proper choosing these vertices we may assume that $g$ is a general position map. 
    Then  
    \begin{multline*}
        SA(f) \stackrel{\juststep}{=} SA(f')                                               \stackrel{\juststep}{=}
        \sum_{\{P,Q\}\in G_k}
            f'P \cap_M f'Q                               \stackrel{\juststep}{=}  
        \sum_{\{P,Q\}\in G_k}
            % (f'P\cup gP) \cap_M (f'Q\cup gQ)
            f'_gP \cap_M f'_gQ
            \stackrel{\juststep}{=} \\
        \sum_{\{P,Q\}\in G_k}
            \sum_{\{\alpha,\beta\} \in T\{P,Q\}}
                % (f'\alpha\cup g\alpha) \cap_M (f'\beta\cup g\beta)         
                f'_g\alpha \cap_M f'_g\beta
                \stackrel{\juststep}{=}  
        \sum_{\{\alpha, \beta\}\in H}
            % (f'\alpha\cup g\alpha) \cap_M (f'\beta\cup g\beta)
            f'_g\alpha \cap_M f'_g\beta
            \stackrel{\juststep}{=}     
        \sum_{\{\alpha, \beta\}\in H}
            % |(f'\alpha\cup g\alpha) \cap (f'\beta\cup g\beta)|_2
            \abs{f'_g\alpha \cap f'_g\beta}_2
            \stackrel{\juststep}{=}   \\ 
        \sum_{\{\alpha, \beta\}\in H}
            (|f'\alpha\cap f'\beta|_2+|g\alpha\cap g\beta|_2) \stackrel{\juststep}{=}  
        \sum_{\{\alpha, \beta\}\in H}
            |g\alpha \cap g\beta|_2              \stackrel{\juststep}{=} 1.
    \end{multline*} 
\setcounter{mcnt}{0}
    Here
    \begin{itemize}[leftmargin=6mm, nosep]
        \item equality \cnt holds since $SA(f)$ is independent of homotopy of $f$;

        \item $G_k$ is the set of unordered pairs of $k$-octahedra from $[3]^{*k+1}$ whose intersection is~$1^{*k+1}$;
        
        \item equality \cnt is the definition of $SA(f')$;

        \item $f'_g\xi := f'\xi \cup g\xi$, where $\xi$ is either a $k$-octahedron or a $k$-face;

        \item equality \cnt holds since $gP$ and $gQ$ are null-homologous;

        \item $T\{P,Q\}$ is the set\aronly{\footnote{Note that $T\{P,Q\}$ is the image of the torus $P \times Q$ under the projection to the quotient of $[3]^{*k+1}\times[3]^{*k+1}$ under the symmetry exchanging factors.}} of pairs $\{\alpha,\beta\}$ formed by $k$-faces $\alpha,\beta$ of $[3]^{*k+1}$ such that either $\alpha \in P$ and $\beta \in Q$, or vice versa (note that $\alpha = \beta = 1^{*k+1}$ is possible);

        \item
        equality \cnt holds since for any $\{P,Q\} \in G_k$
        \[
            f'_gP \cap_M f'_gQ =   
            \sum_{(\alpha,\beta) \in P \times Q} 
                f'_g\alpha \cap_M f'_g\beta = 
            \sum_{\{\alpha,\beta\} \in T\{P,Q\}}
                f'_g\alpha \cap_M f'_g\beta,
            \quad\text{where}
        \]
        \begin{itemize}[leftmargin=5mm]
            \item[$\circ$] the second (and the first) term is meaningful
            for any \emph{unordered} pair $\{P,Q\}$ since the term
            is symmetric in $P,Q$,
            
            \item[$\circ$] the first equality holds since $f'_gP = \bigoplus\limits_{\alpha\in P} f'_g\alpha$ for $k$-cycles $f'_g\alpha$, and since we have the analogous equality for $f'_gQ$ and $f'_g\beta$,

            \item[$\circ$] we prove the second equality as follows: since $P\cap Q=1^{*k+1}$, for $\{\alpha, \beta\} \in T\{P,Q\}$ exactly one of the pairs $(\alpha, \beta)$ and $(\beta, \alpha)$ lies in $P \times Q$; hence the formula $(\alpha,\beta) \mapsto \{\alpha, \beta\}$ gives a bijection $P \times Q \to T\{P,Q\}$; this implies the second equality;
        \end{itemize}
        
        \item $H$ is the set of all unordered pairs of vertex-disjoint $k$-faces of $[3]^{*k+1}$;

        \item equality \cnt holds since $H=\bigoplus\limits_{\{P,Q\}\in G_k} T\{P,Q\}$, which is a reformulation of Lemma~\ref{l:comb} (Combinatorial) below;

        \item equality \cnt holds by definition of $\cap_M$ since $f',g$ are general position maps;

        \item equality \cnt holds since $f'\alpha \cap g\beta = \varnothing$ for vertex-disjoint $\alpha,\beta$; this holds since 
        $$f'[3]^{*k+1} \subset M - \Int B,\quad g[3]^{*k+1} \subset B,\quad f'|_L = g|_L,$$ 
        and $g$ is a general position map;

        % \item equality \cnt holds since $H$ is a \emph{cycle}, and $f'$ is homotopic to the embedding $f$, so the map (\emph{$2k$-cochain}) $\nu(f')\colon H\to\Z_2$ defined by $\nu(f')(\alpha,\beta):=|f'\alpha\cap f'\beta|_2$ is
        % % cohomologous to the zero map?
        % a \emph{coboundary} by \cite[Lemma~2.3.2]{Sk24} (essentially proved by van~Kampen-Shapiro-Wu-Johnson); 
\addtocounter{mcnt}{1}
        \item equality \cnt is the result of van Kampen \cite[Satz~5]{vK32} (see~\aronly{Lemma~\ref{lem:vanKampen}}\jonly{\cite[Lemma~7.1]{DS22}}).
    \end{itemize}
%Here $g':K\to M$ is a map (whose image is contained in a small neighborhood of $B$ and) such that $$|f'\alpha\cap f'\beta|_2+|g\alpha\cap g\beta|_2 = |g'\alpha \cap g'\beta|_2\quad\text{for every}\quad\{\alpha, \beta\}\in D.$$ 
%Such a map $g'$ exists by \cite[Lemma 2.3.2]{Sk24} of van Kampen-Shapiro-Wu-Johnson (see \cite[the third paragraph of the proof of Lemma 2.3.1.a]{Sk24}).
\addtocounter{mcnt}{-2}
    It remains to prove equality~\juststep.
    For a general position map $h\colon [3]^{*k+1}\to M$ the \emph{van Kampen number}
    \[
        v(h) := \sum_{\{\alpha, \beta\}\in H} |h\alpha \cap h\beta|_2
    \]
    is the parity of the number of all pairs $\{\alpha,\beta\}$ of vertex-disjoint $k$-faces of $[3]^{*k+1}$ such that $\abs{h\alpha \cap h\beta}_2 = 1$.
    Then \themcnt\ holds since
    \[
        v(f') = v(f) = 0.
    \]
    Here the second equality holds since $f$ is an embedding.
    Let us present a fairly standard argument for the first equality. 
    
    For a general position map $h\colon [3]^{*k+1}\to M$ the \emph{intersection cocycle} $\nu(h) \subset H$ is the set of all pairs $\{\sigma,\tau\}$ such that $\abs{h\sigma \cap h\tau}_2 = 1$;
    so $v(h) = \babs{\nu(h)}_2$.
    For vertex-disjoint $(k-1)$-face $e$ and $k$-face $\alpha$ the \emph{elementary coboundary} of $(\alpha,e)$ is the set of all unordered pairs $\{\alpha,\beta\}$ of vertex-disjoint $k$-faces such that $e\subset\beta$.
    Since $f'$ is homotopic to $f$, by a lemma of van Kampen-Shapiro-Wu-Johnson \cite[Lemma~3.5]{Sh57} \cite[Lemma~2.3.2 and Remark 2.3.3]{Sk24}, $\nu(f)$ and $\nu(f')$ are \emph{cohomologous}, i.e. $\nu(f) \oplus \nu(f')$ is the mod~$2$ sum of some elementary coboundaries. 
    For any $(k-1)$-face $e = e_1 * \ldots * e_{k+1}$ of $[3]^{*k+1}$ there is the unique $t(e)\in[k+1]$ such that $e_{t(e)}=\varnothing$. 
    For a $k$-face $\alpha$ and a $(k-1)$-face $e$ the elementary coboundary of $(\alpha, e)$ consists of pairs $\{\alpha,\beta = \beta_1 * \ldots *\beta_{k+1}\}$ such that $\beta_{t(e)} \neq \alpha_{t(e)}$ and $\beta_s=e_s$ for every $s\ne t(e)$. 
    Hence any elementary coboundary consists of two elements. 
    Since the size of any elementary coboundary is even, $\babs{\nu(f)\oplus\nu(f')}_2 = 0$. 
    Hence $\babs{\nu(f)}_2 = \babs{\nu(f')}_2$. 
\end{proof}

\begin{lemma}[Combinatorial]\label{l:comb} 
    The following two sets\aronly{\footnote{
    In the dual language of parallelelepipeds (see footnote~\ref{f:paral}) Lemma~\ref{l:comb} states that the following two sets are equal:
            
    \begin{itemize}[nosep]
        \item the set of all ordered pairs of vectors in $[3]^{k+1}$ such that the vectors have no equal components;

        \item the mod~$2$ sum of products $P\times Q$ over all ordered pairs $(P,Q)$ of parallelepipeds whose intersection is $1^{\times k+1}$.
    \end{itemize}}}
    are equal:

    \begin{itemize}[nosep]
        \item the set of all ordered pairs $(\sigma,\tau)$ of vertex-disjoint $k$-faces of $[3]^{*k+1}$;

        \item the mod $2$ sum of products $P\times Q$ over all ordered pairs $(P,Q)$ of $k$-octahedra from $[3]^{*k+1}$ whose intersection is $1^{*k+1}$.
    \end{itemize}
\end{lemma}

Lemma~\ref{l:comb} can be deduced from \cite[Proposition~2.5.4.d]{KS21e}, but we present a simpler
% [than the deduction]
direct proof below.
The analogue of Lemma~\ref{l:comb} for $\Delta_{2k+2}^k$ instead of $[3]^{*k+1}$
is the main idea of \cite[Lemma~20]{PT19}, and is implicit in \cite[proof of Lemma~20]{PT19}; cf.~\cite[Proposition~2.2]{KS21}.

\begin{proof}[Proof of Lemma~\ref{l:comb}]
    It suffices to prove that

    {
    \renewcommand{\theenumi}{\Alph{enumi}}
    \renewcommand{\labelenumi}{(\Alph{enumi})}
    \begin{enumerate}
        \item\label{enum:sq:non-adj}
        \emph{for any vertex-disjoint $k$-faces $\alpha,\beta \in [3]^{*k+1}$ there is exactly one ordered pair $(P,Q)$ of $k$-octahedra from $[3]^{*k+1}$ such that $P\cap Q = 1^{*k+1}$ and $(\alpha,\beta) \in P \times Q$}; and
        
        \item\label{enum:sq:adj}
        \emph{for any $k$-faces $\alpha, \beta \in [3]^{*k+1}$ sharing a common vertex there is an even number of ordered pairs $(P,Q)$ of $k$-octahedra from $[3]^{*k+1}$ such that $P\cap Q = 1^{*k+1}$ and $(\alpha,\beta) \in P \times Q$}.
    \end{enumerate}
    }

    For a $k$-octahedron $P$ from $[3]^{*k+1}$ containing the $k$-face $1^{*k+1}$ denote by $\sigma^P$ the $k$-face of $P$ that is opposite to $1^{*k+1}$.
    
    \smallskip    
    \emph{Proof of \eqref{enum:sq:non-adj}.}
    Take any $k$-octahedra $P,Q$ from $[3]^{*k+1}$ such that $\alpha \in P$, $\beta \in Q$ and $P\cap Q = 1^{*k+1}$.
    Take any $i \in [k+1]$.
    
    \emph{Suppose that $\alpha_i \neq 1$.}
    Since $\alpha \in P$, it follows that $\sigma^P_i = \alpha_i$.
    Since $P\cap Q = 1^{*k+1}$, we have $\sigma^Q_i = 5 - \sigma^P_i = 5 - \alpha_i$.
    
    \emph{Suppose that $\alpha_i = 1$.}
    Then $\beta_i \neq 1$.
    Hence $\sigma^Q_i = \beta_i$ and $\sigma^P_i = 5 - \beta_i$ analogously to the previous paragraph.
    
    Hence the $i$-th coordinates $\sigma^P_i$ and $\sigma^Q_i$ are uniquely defined for each $i \in [k+1]$.
    Thus there is exactly one pair $(P,Q)$ from the statement of~\eqref{enum:sq:non-adj}.

    \smallskip    
    \emph{Proof of \eqref{enum:sq:adj}.}
    Since $\alpha$ and $\beta$ share a common vertex, we may assume that $\alpha_1 = \beta_1$ (the other cases are analogous).
    
    \emph{Suppose that $\alpha_1\ne1$.}
    Take any $k$-octahedra $P,Q$ from $[3]^{*k+1}$ such that $P\cap Q = 1^{*k+1}$.
    Since $\sigma^P,\sigma^Q$ are vertex-disjoint, we have $\sigma^P_1\ne\sigma^Q_1$. 
    Hence either $\sigma^P_1 \ne \alpha_1$ or $\sigma^Q_1 \ne \beta_1$. 
    Then either $\alpha \not\in P$ or $\beta \not\in Q$. 
    Thus $(\alpha, \beta) \not\in P \times Q$.
        
    \emph{Suppose that $\alpha_1=1$.}
    For every $k$-octahedron $R = \overline{r_1} * \overline{r_2} * \ldots * \overline{r_{k+1}} \subset [3]^{*k+1}$ denote $R':= \overline{5-r_1} * \overline{r_2} * \ldots * \overline{r_{k+1}}$.
    Clearly, $(R')'=R$, and if $P\cap Q = 1^{*k+1}$, so $P'\cap Q' = 1^{*k+1}$.
    Thus the pairs $(P,Q)$ from~\eqref{enum:sq:adj} split into couples corresponding to `opposite' pairs $(P,Q)$ and $(P',Q')$.
    Since $\alpha_1 = 1$, the $k$-face $\alpha$ is contained either in both $P$ and $P'$ or in none.
    Analogously for $\beta$.
    Then for every couple $\{(P,Q),(P',Q')\}$ the pair $(\alpha,\beta)$ is contained either in both $P \times Q$ and $P' \times Q'$ or in none.
    This implies~\eqref{enum:sq:adj}.
\end{proof}

%%%%%%%%%%%%%%%%%%%

\section{Proof of Theorem~\ref{t:lowrank} (Low Rank) for $k=1$}\label{s:proof-base}

Here and below rows and columns of matrices are not necessarily numerated by octahedra (as opposed to \S\ref{s:intr},\ref{s:proof-induct}).

For a (block) matrix $X$ whose rows are numerated by $[\ell] \times [m]$, and any $i,j\in[\ell]$ define the \emph{$m\times m$-block} $X_{i,j}$ by $(X_{i,j})_{a,b} := X_{(i,a)(j,b)}$. 

Denote by $0_m$ the zero $m \times m$-matrix, and by $J_m$ the $m\times m$-matrix consisting of units.

\begin{lemma}
\label{lem:block-units}
    Suppose $N$ is a matrix whose rows are numerated by $[\ell] \times [m]$, such that for every $i,j\in[\ell]$
    \[
        \begin{cases}
            N_{i,j} = 0_m, & i \leqslant j, \\
            N_{i,j}\in\{0_m,J_m\}, & i > j.
        \end{cases}
    \]
    Then $\rk N \leqslant \ell-1$.
\end{lemma}

\begin{proof}
    Define the matrix $F$ of size $\ell$ so that $F_{i,j} = 0$ if and only if $N_{i,j} = 0_m$.
    Clearly, $F_{i,j}=0$ if $i \leqslant j$.
    Then the first row of $F$ consists of zeros.
    Thus $\rk N = \rk F \leqslant \ell-1$.
\end{proof}

A matrix $Y$ with $\Z_2$-entries is said to be \textbf{\emph{tournament}} if $Y_{a,b}+Y_{b,a}=1$ for all $a\ne b$.
In other words, $Y$ is a tournament matrix if $Y + Y^T$ is the inversed identity matrix, i.e. the sum of the identity matrix and $J_m$, where $m$ is the number of rows of $Y$.

\begin{lemma}
[Tournament; {\cite[Theorem~1]{Ca91}}]\label{lem:caen}
    The rank of a tournament $m \times m$-matrix is at least $\dfrac{m-1}{2}$.
\end{lemma}

\begin{proof}
    For a tournament $m\times m$-matrix $Y$
    \[
        \rk Y = \frac{\rk Y + \rk Y^T}{2} \geqslant \frac{\rk (Y + Y^T)}{2} = \frac{\rk (I_m + J_m)}{2}  \geqslant \frac{\rk I_m - \rk J_m}{2} = \frac{m-1}{2},
    \]
    where
    \begin{itemize}[nosep]
        \item $I_m$ is the identity matrix,
        \item the inequalities hold by subadditivity of rank,
        \item the middle equality holds since $Y$ is a tournament matrix, so $Y+Y^T = I_m + J_m$.
    \end{itemize}
\end{proof}

Recall that 
\[
    [m_1] \sqcup [m_2] \sqcup \ldots \sqcup [m_{\ell}] =
    1 \times [m_1] \sqcup 2 \times [m_2] \sqcup \ldots \sqcup \ell \times [m_{\ell}],
\]
so that the disjoint union of $\ell$ copies of $[m]$ is $[\ell] \times [m]$.

Take a (block) matrix $X$ whose rows are numerated by $[m_1] \sqcup \ldots \sqcup [m_{\ell}]$.

For $i,j\in[\ell]$ define the $m_i\times m_j$-\emph{block} $X_{i,j}$ by $(X_{i,j})_{a,b} := X_{(i,a)(j,b)}$.

The matrix $X$ is said to be \textbf{\emph{tournament-like}} if for every $i \in [\ell]$ the diagonal block $X_{i,i}$ is a tournament matrix.

The matrix $X$ is said to be \textbf{\emph{diagonal-like}} if it is obtained by removing some rows and columns symmetric to the rows, from a block matrix $\t{X}$ with the following properties:
\begin{itemize}[nosep]
    \item its rows are numerated by $[\ell] \times [m]$,
    
    \item $m \geqslant m_i$ for every $i \in [\ell]$,
    
    \item the under-diagonal block $\t{X}_{i,j}$ is a diagonal matrix for every $i,j\in[\ell]$, $i > j$.
\end{itemize}

\begin{lemma}
[Diagonal-Tournament]\label{lem:tour-diag}
    The rank of any tournament-like diagonal-like matrix, whose rows are numerated by $[m_1] \sqcup \ldots \sqcup [m_{\ell}]$, is at least
    %the footnote is superfluous outside version 3
    %\footnote{Proof of the corresponding Lemma 3.4 in arXiv versions 1 and 2 of this paper contains a minor gap. The gap is easily covered here. The gap is not present in \cite{FK19}, it appeared by our not enough careful exposition of that argument.}
    $\sum\limits_{i=1}^{\ell} \dfrac{m_i - 1}{2}$.
\end{lemma}

\begin{proof}
    The simple case when there are no units in the under-diagonal blocks follows by Lemma~\ref{lem:caen} because all under-diagonal blocks are zero matrices.

    The proof is by induction on $m_1+\ldots+m_{\ell}$. 
    The base follows by the above simple case.
    Let us prove the inductive step in the case when there is a unit in the union of under-diagonal blocks.
    Denote by $D$ the given matrix.
    Arrange the rows and the columns of $D$ lexicographically\aronly{\footnote{
    The pair $(x,a)$ is said to be \emph{lexicographically smaller} than $(y,b)$ if either $x = y$ and $a < b$, or $x < y$.}}.
    Let $(i,a)$ be the lexicographically maximal (i.e. the lowest) row whose intersection with the union of under-diagonal blocks of $D$ is non-zero.
    Let $(j,b)$ be the lexicographically minimal (i.e. the leftmost) column whose intersection with the row $(i,a)$ is non-zero.
    Formally, $(i,a)$ is the lexicographically maximal row such that there is 
    \[
        (j,b)\quad\text{for which}\quad i > j,\quad D_{(i,a)(j,b)}=1,\quad\text{and}\quad D_{(i,a)(j',b')} = 0\quad\text{for all}\quad
        % j' < j,\ b' \in [m_{j'}].
        (j', b') < (j, b).
    \]
    (Note that by the choice of the row, $D_{(i',a')(j,b)} = 0$ for all
    % $i' > i$ and $a' \in [m_{i'}]$.)
    $(i',a') > (i,a)$.)
    
    Let $D'$ be the matrix obtained from $D$ by adding the row $(i,a)$ to all other rows whose intersection with the column $(j,b)$ is non-zero.
    Let $D''$ be the matrix obtained from $D'$ by adding the column $(j,b)$ to all other columns whose intersection with the row $(i,a)$ is non-zero.
    In $D''$ the union of the row $(i,a)$ and the column $(j,b)$ contains only one unit, located at the intersection of these row and column.
    
    Let $D'''$ be the matrix obtained from $D''$ by removing the rows $(i,a)$, $(j,b)$, and the columns $(i,a)$, $(j,b)$.
    Then $D''' \in \Z_2^{([n_1] \sqcup \ldots \sqcup [n_{\ell}])^2}$ for $n_i = m_i - 1$, $n_j = m_j - 1$, and $n_s = m_s$ for $s \not\in \{i,j\}$, and $D'''$ is a tournament-like diagonal-like matrix.
    Then by induction hypothesis
    \[
        \rk D = \rk D' = \rk D'' \geqslant \rk D''' + 1 \geqslant 
        \sum_{s=1}^{\ell} \frac{n_s-1}{2} + 1 = \sum_{s=1}^{\ell} \frac{m_s - 1}{2}.
    \]
\end{proof}

Recall that
\[
    \Z_2^{\binom{n}{2}^2 \times \binom{n}{2}^2}
\]
is the set of matrices whose rows and columns are numerated by $2$-octahedra (i.e. by cycles of length~$4$) in \([n]^{*2}=K_{n,n}\).
Recall that 
\[
    \overline a = \{1,a\}\quad\text{for}\quad a>1.
\]
For $A \in \Z_2^{\binom{n}{2}^2 \times \binom{n}{2}^2}$ define the matrix 
\[
    B = B(A) \in \Z_2^{[n-1]^2 \times [n-1]^2}\quad\text{by}\quad
    B_{(i,a)(j,b)} := A_{\overline{i+1} * \overline{a+1},\,\overline{j+1} * \overline{b+1}}
\]
(this notation helps to make a transition from all cycles of length~$4$ in $K_{n,n}$ to cycles containing the edge $(1,1')$, and from matrices whose rows and columns are numerated by such cycles to block matrices).

Recall that an \emph{inversed diagonal matrix} is the sum of some diagonal matrix and $J_m$.

\begin{lemma}
\label{l:ind}
    Suppose that $n \geqslant 4$ and $A \in \Z_2^{\binom{n}{2}^2 \times \binom{n}{2}^2}$ is independent and additive.
    Take $B = B(A)$.
    Then for any pairwise distinct $i,j,s \in [n-1]$ the residue $B_{(i,a)(j,b)} + B_{(s,a)(j,b)}$ does not depend on distinct $a,b \in [n-1]$.
    In other words, the sum $B_{i,j} + B_{s,j}$ of blocks is either a diagonal matrix or an inversed diagonal matrix.
\end{lemma}

\begin{proof}
    For fixed $i,j,s$ denote
    \[
        P(a) := \overline{i+1} * \overline{a+1} \oplus \overline{s+1} * \overline{a+1} = \{i+1,s+1\} * \overline{a+1} \quad\text{and}\quad
    %\]
    %\[
        Q(b) := \overline{j+1} * \overline{b+1}.
    \]
    By the additivity,
    \[
        B_{(i,a)(j,b)} + B_{(s,a)(j,b)} = A_{P(a),Q(b)}.
    \]
    
    The residue $B_{(i,a)(j,b)} + B_{(s,a)(j,b)}$ does not depend on $a$ since for any $a' \in [n-1]$ distinct from $a$ and $b$
    \[
        A_{P(a),Q(b)} = A_{P(a'),Q(b)}
    \]
    by Proposition~\ref{pr:oneswap} (One-coordinate swap).
    Analogously the residue does not depend on $b$.
    Now, the residue does not depend on both $a$ and $b$ since $n-1 \geqslant 3$.
\end{proof}

\begin{lemma}
\label{l:pre-tour-like}
    Suppose that $n \geqslant 4$ and $A$ is an $(n,1)$-matrix.
    Then for any $j > 1$ the off-diagonal block $B(A)_{1,j}$ is a tournament matrix.
\end{lemma}

\begin{proof}
    Denote $B := B(A)$.
    First, let us show that $B_{1,2}$ is a tournament matrix.
    Take the matrix $Z := A_{\overline 2,\overline 3} + A_{\overline 3, \overline 2}$, which is an $(n,0)$-matrix by Lemma~\ref{l:hered} (Heredity).
    By the symmetry of $A$ and $Z$ it suffices to check that $Z_{\overline x, \overline y} = 1$ for any numbers $1 < x < y$.
    This follows since
    \[
        Z_{\overline x, \overline y} = Z_{\overline{2\mathstrut}, \overline{y\mathstrut}} = Z_{\overline 2, \overline 3} = 1,
        \qquad\text{where}
    \]
    \begin{itemize}[nosep]
        \item each of the first and the second equalities is either a tautology or holds by Proposition~\ref{pr:oneswap} (One-coordinate swap),

        \item the last equality is the non-triviality of $Z$.
    \end{itemize}
    
    Now for any $j > 2$ the matrix $B_{1,2} + B_{1,j}$ is either a diagonal matrix or an inversed diagonal matrix, by the symmetry of~$B$ and Lemma~\ref{l:ind}. Then $B_{1,j}$ is a tournament matrix.
\end{proof}

\begin{proof}[Proof of Theorem~\ref{t:lowrank} for $k=1$]
    Take $B = B(A)$.
    Define the matrix $C$ whose rows are numerated by $[n-2] \times [n-1]$, by $C_{i,j} := B_{i+1,j+1} + B_{1,j+1}$, i.e.~$C$ is obtained from $B$ by row addition and taking submatrix.
    By Lemma~\ref{l:ind}, for every $i \neq j$ the block $C_{i,j}$ is either a diagonal matrix or an inversed diagonal matrix.
    Thus the following formula defines the matrix $D$ of the same block structure as $C$:
    for $i,j \in [n-2]$
    \[
        D_{i,j} :=
        \begin{cases}
            C_{i,j}, & \text{if either $i \leqslant j$ or $C_{i,j}$ is a diagonal matrix;} \\
            C_{i,j} + J_{n-1}, & \text{if both $i > j$ and $C_{i,j}$ is an inversed diagonal matrix.}
        \end{cases}
    \]
    Now the theorem follows since
    \[
        \rk A \geqslant
        \rk B \geqslant
        \rk C \geqslant
        \rk D - \rk (C+D) \geqslant
        \frac{(n-2)^2}{2} - (n-3) \geqslant
        \frac{(n-3)^2}{2}.
    \]
    Here
    \begin{itemize}[nosep]
        \item the first and second inequalities follow by definition of $B$ and $C$, respectively;
        
        \item the third inequality holds by subadditivity of rank;
        
        \item the last inequality is obvious.
    \end{itemize}
    The fourth inequality holds by Lemma~\ref{lem:tour-diag} (Diagonal-Tournament) applied to $D$, $\ell = n - 2$ and $m_1=\ldots=m_\ell = n - 1$, and Lemma~\ref{lem:block-units} applied to $D+C$, $\ell = n - 2$ and $m = n - 1$.
    It is obvious that the hypotheses of Lemma~\ref{lem:block-units} are fulfilled for $D+C$.
    It remains to prove that the hypotheses of Lemma~\ref{lem:tour-diag} (Diagonal-Tournament) are fulfilled for~$D$.
    
    Clearly, $D$ is diagonal-like.
    We prove that $D$ is tournament-like as follows.
    Take any $s \in [n-2]$.
    By Lemma~\ref{l:pre-tour-like} for $j=s+1$, the block $B_{1, s+1}$ is a tournament matrix.
    Since $A$ is symmetric, $B_{s+1,s+1}$ is symmetric.
    Then $D_{s,s} = C_{s,s} = B_{s+1,s+1}+B_{1,s+1}$ is a tournament matrix.
\end{proof}

\begin{remark}[On generalization of linear algebraic properties]
    During the work with similar linear algebraic properties of $(n,k)$-matrices
    (e.g.~Proposition~\ref{pr:oneswap}, Lemma~\ref{l:ind}) we got the impression that probably there may be another simple property, which can help with the estimate $c_k n^{k+1}$.
    However, we did not succeed in obtaining it.
\end{remark}

\aronly{

\section{An approach via generalized Gr\"unbaum-Kalai-Sarkaria conjecture}\label{s:adipa}

%Appendix: 
 
Here we present an interesting alternative approach \cite{Ad18, AP24} to the asymptotic version~\ref{r:hist:complex}.\ref{en:r-hist-asym} of the K\"uhnel conjecture on embeddings. 
(This approach does not work for almost- or $\Z_2$-embeddability defined in Remarks~\ref{r:alem} and~\ref{r:emb-mod-2}.)
This version follows from Conjectures~\ref{c:kuh-simp}.\ref{en:c-kuh-simp-asym} and~\ref{c:adi} below.
Conjecture~\ref{c:kuh-simp}.\ref{en:c-kuh-simp-asym} in its turn follows from Conjecture~\ref{c:ggks} as explained in Remark~\ref{r:simp-asym-deduc}.

We write `conjecture' not `theorem' in~\ref{c:ggks}, \ref{c:kuh-simp} and~\ref{c:adi} because there is a publicly available but so far unanswered criticism of their proofs (see Remarks~\ref{r:ggks}, \ref{r:helpadi} and~\ref{r:histc-adpa}).  
These remarks show that the argument in \cite{Ad18, AP24} for Conjectures~\ref{c:ggks}, \ref{c:kuh-simp} and~\ref{c:adi} is not reliable up to the standards of refereed journals, hence these papers do not present legitimate claims for the conjectures. 
So we do not 
%there is no need for us to 
present other critical remarks; we do not certify that the other parts of these papers are reliable.
Besides, Remark~\ref{r:hist:complex}.\ref{en:r-histc-wrong} suggests that at least one of Conjectures~\ref{c:ggks} and~\ref{c:adi} is wrong or hard to prove, so that a (dis)proof of the K\"uhnel conjecture~\ref{c:kuh} on embeddings would require technique different from the one used for Conjectures~\ref{c:ggks} and~\ref{c:adi}\footnote{The paper \cite{Ad18} is unpublished. 

\smallskip
\emph{Historical note.}
The paper \cite{AP24} appeared on arXiv in April 2024, after arXiv version 2 of the current paper (where the proof is complete, or at least not criticized) appeared in September, 2022. 
So the paper \cite{AP24} was apparently submitted to a journal after the submission (of the shortened arXiv version 3) of the current paper to a journal in February, 2024. 
See also footnotes \ref{f:refe} and \ref{f:letadi}.}. 

\smallskip
\emph{Suppression of criticism.}
We sent to K.~Adiprasito the criticism of \cite{Ad18} from Remarks~\ref{r:ggks} and~\ref{r:helpadi} in April 2024, before putting to arXiv version~4 of the current paper (containing the remarks). 
So far neither we receive any reply nor an update of \cite{Ad18} corrected according to the criticism, or referring to the criticism, appeared on arXiv. 
We sent to K.~Adiprasito and Z.~Pat\' akov\' a the criticism of (arXiv version~2 of) \cite{AP24} from Remarks~\ref{r:histc-adpa}.(b--h) in February 2025 before putting to arXiv version~6 of the current paper (containing the remark). 
%; [here is their reply (for which they perpetually released copyright): ...] 
After the appearance of this version~6 on arXiv, we received letters 
%from J.D. Paik and K. Adiprasito 
expressing disagreement with the criticism of \cite{AP24}, but not justifying (and even not mentioning) disagreement with any of the specific Remarks~\ref{r:histc-adpa}.(b--h); see our replies in Remark~\ref{r:ap-let}.
In (arXiv version~3 and journal version of) \cite{AP24} all the specific remarks (except a part of minor Remark~\ref{r:histc-adpa}.f from arXiv version 6 of the current paper) are ignored, and no reference to the criticism is presented.
We sent to the Editors of Bull. LMS the criticism of (published version of) \cite{AP24} from Remarks~\ref{r:histc-adpa}.(b--i) in March 2025. 
The Editors' 
%kindly allowed to published their 
reply (Remark \ref{r:aplms}), its criticism (Remark \ref{e:ap-blms}), lack of the Editors' reply to the criticism, and no information on the criticism on the journal's site, could be interesting to math community, because they show that the reliability standards of Bull. LMS are surprisingly low. 

\smallskip
Recall that for a $2k$-manifold $M$ the \emph{rational Betti number} \(b_k(M) = \dim H_k(M; \Q)\) is defined in Remark~\ref{r:hist:complex}.\ref{en:r-hist-beta}.
For a complex $K$ denote by $f_j(K)$ the number of its $j$-faces, and denote 
% \linebreak
$\gamma(K) := f_k(K)-(k+2)f_{k-1}(K)$.

\begin{conjecture}[generalized Gr\"unbaum-Kalai-Sarkaria conjecture]\label{c:ggks}
For any subcomplex~$K$ of any triangulation of any $2k$-manifold~$M$ we have
\[\binom{2k + 1}{k+1} b_k(M) \ge \gamma(K).\]
\end{conjecture}

\begin{remark}\label{r:ggks} 
Conjecture \ref{c:ggks} is stated as a result in \cite[Remark~4.9]{Ad18}. 
In \cite{Ad18} the only comments on the proof of Conjecture \ref{c:ggks} are given in \cite[Remark~4.9]{Ad18}; it is only stated that Conjecture \ref{c:ggks} is proved analogously to \cite[Corollary~4.8]{Ad18}, i.e. to the particular case of Conjecture \ref{c:ggks} for $M$ a $2k$-dimensional rational sphere in $\R^{2k+1}$ satisfying the hard Lefschetz property.  
It is not explained there how to get rid of these conditions, which presumably are used in the proof of \cite[Corollary~4.8]{Ad18} (at least it is not stated in \cite[Remark~4.9]{Ad18} that they are not used).   
%condition that $M$ embeds into $\R^{2k+1}$. 
%This condition is present in \cite[Corollary~4.8]{Ad18}, so presumably is used in the proof, so presumably has to be used in the proof of Conjecture \ref{c:ggks}.
\end{remark}

\begin{conjecture}[The K\"uhnel conjecture for simplicial embeddings]\label{c:kuh-simp}
\begin{remarkenumi}
\item\label{en:c-kuh-simp-est}    
If some triangulation of a $2k$-manifold $M$ has a subcomplex isomorphic to $\Delta_n^k$, then
\[
    \binom{2k + 1}{k + 1} b_k(M) \ge \binom{n-k-1}{k+1}. 
\]

\item\label{en:c-kuh-simp-asym}
\emph{(The asymptotic version.)}
Under the conditions of~(\ref{en:c-kuh-simp-est}) for any $k\ge1$ there is $c_k>0$ such that
\[
    b_k(M) \gtrsim c_k n^{k+1} \quad\textit{as}\quad n\to\infty. 
\]
\end{remarkenumi}
\end{conjecture}

\begin{remark}[Deduction of Conjecture~\ref{c:kuh-simp}.\ref{en:c-kuh-simp-asym} from Conjecture~\ref{c:ggks}]
\label{r:simp-asym-deduc}
    By Conjecture~\ref{c:ggks}
    \[
        \binom{2k+1}{k+1}b_k(M)  \ge    \gamma(\Delta_n^k) = \binom {n+1}{k+1}-(k+2)\binom {n+1}k \sim \frac{n^{k+1}}{(k+1)!}.
    \]
\end{remark}

\begin{remark}\label{r:helpadi}
\begin{remarkenumi}
\item
Conjecture~\ref{c:kuh-simp}.\ref{en:c-kuh-simp-est} is stated as a result in \cite[Remark~4.9]{Ad18}: \emph{`...
This [Conjecture~\ref{c:ggks}] implies at once another conjecture of Kühnel [Küh94]: 
if a complete $k$-dimensional complex on $n$ vertices embeds into $M$ sufficiently tamely (so that it extends to a triangulation of $M$), then 
$\binom{n-k-1}{k+1} \le \binom{2k + 1}{k + 1} b_k(M)$'}. 
% thereby bounding the number of vertices such a complex can have.'

Here `$n$ vertices' should be changed to `$n+1$ vertices', cf.~Conjecture \ref{c:kuh}.

The \emph{`implies at once'} in the above quotation from \cite[Remark~4.9]{Ad18} is unclear and is presumably wrong. 
Conjecture~\ref{c:ggks} implies not Conjecture~\ref{c:kuh-simp}.\ref{en:c-kuh-simp-est} but only its asymptotic version~\ref{c:kuh-simp}.\ref{en:c-kuh-simp-asym}. 
(More precisely, the natural `at once' deduction presented in Remark~\ref{r:simp-asym-deduc} gives only the asymptotic version as explained in Remark~\ref{r:helpadi}.\ref{en:r-helpadi-wr}, and the paper \cite{Ad18} does not contain any `at once' deduction.)
The asymptotic version~\ref{c:kuh-simp}.\ref{en:c-kuh-simp-asym} is strong enough a conjecture. 
Still, this situation illustrates negligence of \cite{Ad18} in claiming famous conjectures. 
We do not question K.~Adiprasito's priority for the deduction presented in Remark~\ref{r:simp-asym-deduc}.

\item\label{en:r-helpadi-wr}
Conjecture~\ref{c:kuh-simp}.\ref{en:c-kuh-simp-est} (even with a typo $n \to n+1$ from \cite[Remark~4.9]{Ad18}) does not follow from Conjecture~\ref{c:ggks} by the argument of~Remark~\ref{r:simp-asym-deduc}.
Indeed, 
\[
    \binom n{k+1}-(k+2)\binom nk < \binom {n+1}{k+1}-(k+2)\binom {n+1}k < \binom{n-k-1}{k+1}
\]
for $n$ large compared to $k$.
Here the first inequality is obvious; let us prove the second one.  
Let $N:=n+1$. 
Both parts of the equivalent inequality 
\[
    N(N-1)\ldots(N-k+1)(N-k-(k+1)(k+2)) < (N-k-2)(N-k-2)\ldots(N-2k-2)
\]
are unitary polynomials in $N$ of degree $k+1$. 
For the coefficients of $N^k$ we have
\begin{multline*}
    1+\ldots+(k-1)+k+(k+1)(k+2) = \frac{k(k+1)}{2} +(k+1)(k+2) =
    \\ = (k+1)\frac{(k+2)+(2k+2)}2 = (k+2)+(k+3)+\ldots+(2k+2). 
\end{multline*}
For the coefficients of $N^{k-1}$ we have
\begin{multline*}
    \sum_{i,j=1}^k ij + \sum_{i=1}^{k-1} i(k+1)(k+2) =
    \frac{k^2(k+1)^2}{4} + \frac{k(k-1)(k+1)(k+2)}{2} = \\ =
    % \frac34 k^4+\ldots \leq \\
    \frac{k+1}{4}(3k^3- k^2 - 2k) <
    \frac{k+1}{4}9k^3 <
   %\frac{k+1}{4}(9k^3 + 33k^2 + 40k + 16) = 
   \frac{(k+1)^2(k+2+2k+2)^2}{4} = \sum_{i,j=k+2}^{2k+2} ij.
   \qed
\end{multline*}

\item\label{en:r-helpadi-comp}
The asymptotic version~\ref{r:hist:complex}.\ref{en:r-hist-asym} of the K\"uhnel conjecture on embeddings follows (analogously to the argument of~Remark~\ref{r:simp-asym-deduc}) from Conjecture~\ref{c:ggks} and the inequality $\gamma(K) \geq \gamma(\Delta_n^k)$ for any subdivision $K$ of $\Delta_n^k$.
However, this inequality is not clear.\footnote{The $k$- and $(k-1)$-skeleta of $K$ are larger than those of $\Delta_n^k$, so we only have $f_k(K)\ge f_k(\Delta_n^k)$ and $f_{k-1}(K)\ge f_{k-1}(\Delta_n^k)$, which does not imply the inequality from~\eqref{en:r-helpadi-comp}. 
%that $\gamma(K) \geq \gamma(\Delta_n^k)$.
The inequality from~\eqref{en:r-helpadi-comp} is clear when $K$ is obtained from $\Delta_n^k$ by subdivision of an edge, but is not clear for subsequent subdivisions.}
%\newline
%$\bullet$ An equivalent inequality is $f_k(\Delta)-f_k(\Delta_n^k) \geq (k+2)(f_{k-1}(\Delta)-f_{k-1}(\Delta_n^k))$. 
%One can try to prove the equivalent inequality as follows.  
%If a complex $A$ is obtained from a complex $B$ by subdivision of an edge, then each of the $f_k(A)-f_k(B)$ `added' $k$-faces of $A-B$ contains $k$ `added' $(k-1)$-faces. 
%But there are `deleted' k- and (k-1)-faces 
%Then it would suffice to show that every `added' $(k-1)$-face is contained in at least $k(k+2)$ `added' $k$-faces. 
%The latter is not clear (if at all true).
\end{remarkenumi}
\end{remark}
 
\begin{conjecture}\label{c:adi}
If a simplicial complex embeds into a manifold $M$, then
%the triangulation of the image under the embedding extends to a triangulation of $M$ (i.e. 
some triangulation of $M$ has a subcomplex isomorphic to the complex.
        % ; in other words, the complex simplicially embeds into $M$).
\end{conjecture}

Let us illustrate Conjecture~\ref{c:adi} by showing that its analogue for a graph $M$  (which is not a manifold) is wrong. 
Define the graph $M$ to be the union of the cycle on $4$~vertices, and leaf edges added to each vertex of the cycle. 
Then $K_3$ embeds into $M$ but no triangulation of $M$ has a subgraph isomorphic to $K_3$. 

\begin{remark}\label{r:histc-adpa} 
(a) Conjecture \ref{c:adi} is stated as a result in \cite[Theorem 2]{AP24}. 
For criticism of \cite{AP24} see important specific remarks (b,c,i) and minor specific remarks (d--h) below. 
In all these remarks (b--h) we refer to abstract, introduction, lemmas, etc. from arXiv versions~2 and~3 of \cite{AP24} (all the remarks are relevant to the published version).

We are grateful to K.~Adiprasito, S.~Melikhov, P.~Patak, B.~Sanderson, and M.~Tancer, e-mail exchange with whom helped us to conclude that Conjecture~\ref{c:adi} was open in April~2024 (and so was open in~2022 when the first versions of the current paper appeared on arXiv).\footnote{\label{f:refe} This contradicts a referee report of March 2024 (on the shortened arXiv version~3 of the current paper). 
The report wrongly treated Conjecture \ref{c:adi} as trivial, and so wrongly claimed that the K\"uhnel conjecture~\ref{c:kuh} for PL embeddings is solved by \cite{Ad18}.  
Based only upon this wrong claim, the report recommended rejection.} 
So Conjecture~\ref{c:adi} remains open until the indicated gaps of \cite{AP24} are filled (or until a proof following another approach is written). 

The fact that the paper \cite{AP24} is published does not change the situation. 
The paper was not properly refereed, because the referee did not give even those specific critical remarks which are obvious (even to a non-specialist in the area), see e.g. (d).  
%Remark 6.7.d of arXiv:2208.04188v6. 
Now that the responsibility for the claim is shared by the journal, we sent our critical remarks to the Editors (see Remark \ref{r:ed-let}). 
Having their answer (or having none) would expose the reliability standards of the journal. 

(b) In p. 3, proof of Lemma 6, it is not proved that, and it is not clear why\footnote{\label{f:clever} (added in this arXiv version 8 comparatively to version 6) We are grateful to a clever mathematician (who expressed his wish to remain anonymous) for his efforts to prove the statements highlighted in (b), and to the Editors of Bull. LMS for their public reply to (b) (see Remarks \ref{r:aplms}.b and \ref{e:ap-blms}.b). 
These efforts and reply helped us to conclude that leaving the statements highlighted in (b) without proof is not up to the publicly asserted reliability standards of refereed journals, cf. \cite{Sk21d}.} 

$\bullet$ $\text{st}_\sigma \Delta'$ intersects $\Gamma$ exactly by $\sigma_1\cap\Gamma$. 

$\bullet$ $\sigma_1\cap\Gamma$ is a face of $\Delta$, which is required in the phrase `Since $\Gamma$ is induced in $\Delta$, the latter [$\sigma_1\cap\Gamma$] is a face [of $\Gamma$]'.

%attempt for recovery $\sigma_1\cap\Gamma=\widehat\sigma$ [wrong], but then why `is a face' instead of `is $\widehat\sigma$' 
%A rigorous proof of a higher-dimensional result should not be replaced by `See Figure 1.2 for illustration.'
%In p. 2, Lemma 4. Bing not only proved this in the case of $M$ being a 3-manifold (or in fact for $M=\R^3$)
%это скорее случай "специалисты могут доделать рассуждения и верят, что результат верный", чем "доказательство в статье [AP] корректно". 

(c) The introduction hides the claim for (the asymptotic version~\ref{r:hist:complex}.\ref{en:r-hist-asym} of) the K\"uhnel conjecture on embeddings.
(Recall that this version follows from Conjectures~\ref{c:kuh-simp}.\ref{en:c-kuh-simp-asym} and~\ref{c:adi}.)
This claim was put forward at the beginning of April~2024 by K.~Adiprasito\footnote{\label{f:letadi} In order to avoid confusion, here we present our April 14, 2024 letter to K. Adiprasito. 
Conjectures~1.3.f and~4.1.a of this letter are Conjectures~\ref{c:adi} and~\ref{c:ggks} of the current paper. 
\newline
\emph{Dear Karim, 
\newline
We wish you all the best for proving Conjecture~4.1.a (for $2k$-manifolds $M$ non-embeddable into $\R^{2k+1}$) and Conjecture~1.3.f from our paper attached. 
This would be an outstanding result of yours, because this would imply the K\"uhnel conjecture on embeddings (except topological embeddings for $k=2$). 
We would be glad to refer in our paper to arXiv update of [Ad18], or to a new arXiv paper.
We encourage you to put your paper on arXiv whenever you feel your text is ready for praise and for criticism. 
\newline
In our opinion, making a claim for Conjecture~1.3.f upon the text you sent us on April 6, 2024 will jeopardize your reputation. 
You will presumably realize this by critical reading of your text, so there is no need to send you our specific critical remarks (also, your letter does not ask for them). 
However, we would be glad to present critical remarks to (or praise) any text publicly available on arXiv, and relevant to our paper. 
ArXiv publication (which could never be completely removed) allows one to bear responsibility for a claim, which is necessary for development of mathematics. 
So in order to avoid confusion, unfortunately we would have to delete without reading your letters making a claim for Conjecture~1.3.f. 
But we would feel obliged to publicly react to an arXiv update of [Ad18], or to a new arXiv paper making such a claim. 
\newline
Best wishes, Arkadiy, Slava.}}, cf.~footnote~\ref{f:refe}. 
Instead of presenting this claim, the introduction 
reads: 
\emph{`Some people may consider this theorem [Theorem~2] as a folklore'}. 
This is misleading because this creates the impression that the proof of Conjecture~\ref{c:adi} in \cite{AP24} is a minor folklore result, not a claim for a famous conjecture, so the proof need not be carefully checked, and potential critical remarks on the proof only concern minor technicalities. 
The proof is quite technical, so the critical remarks on the proof are necessarily technical.  
(For an intuitive remark see Remark~\ref{r:hist:complex}.\ref{en:r-histc-wrong}.)

%Excludes? 
 
(d) In the abstract `a generalization of Istv\'an F\'ary's celebrated theorem' is misleading because F\'ary's theorem does not follow from the main result (Theorem 2). 
In the title `A higher-dimensional version' is misleading because the main result (Theorem 2) 
is different from F\'ary's theorem even in low dimensions. 
Moreover, the main result is a `new triangulation version' of F\'ary's theorem (even in low dimensions) because the affine-on-every-simplex embedding of F\'ary's theorem is replaced by an embedding \emph{simplicial in some new triangulation of the range}.   

(e) In p. 3, proof of Lemma 7, `the obvious fact' is not proved and needs to be proved.
%(the more so because the assertion mentioned in (b) is treated matter-of-factly ONLY ONE, without even calling it obvious, but is non-trivial). 
%If not all vertices of $\sigma$ are in $\Gamma$, then this is false
%see bip-comments

(f) (updated in this arXiv version 7 comparatively to version 6)
% In p. 1, Example 3, both pairs $(\sigma,\partial\sigma)$ and $(n*\gamma,\gamma)$ are homeomorphic to $(D^2,\partial D^2)$. 
% So the homeomorphism $\varphi$ between these pairs cannot map some interior point of $\sigma$ to $\gamma$.
In p. 1, Example 3, `A flat neighborhood with respect to' is undefined and is unclear; the explanation `the embedding is homeomorphic to the standard embedding' is meaningless because a homeomorphism a relation between spaces not maps.  

%It is not clear, and it is not explained, why Example 3 is relevant to the paper. 
%Local flatness is not mentioned in Theorem 2 in p. 1, or elsewhere. 
%`We note that it is necessary to modify the embedding map $\phi:X\to M$ to achieve this'.
%Here it is unclear what `this' refers to, because the previous sentence reads `Some people may consider this theorem as a folklore, but as knowledge of PL topology is gradually lost, we feel it is useful to record it here'. 
%In the proof of Theorem 2 the embedding map is not modified?. 

(g) In p.~1, the last line, `the problem' is unclear because no problem was posed. 

%In p. 2, Lemma 5, `simplexwise linear' is undefined and is unclear (as opposed to the statement in [Bing1983] in which $M=\R^3$). 
%(but M is in some R^d)

(h) In p.~3, statement of Lemma~6, $\Delta$ is not defined, and no agreement `in this paper $\Delta$ is...' is given before. 
Presumably `subcomplex of $\Delta$' should be replaced by `subcomplex of a simplicial complex $\Delta$'.  

\begin{figure}[ht]
    \centering
    \begin{multicols}{2}
        \resizebox{40mm}{!}
        {
        \begin{tikzpicture}
        
            \def \dx {1}
            \def \dy {1}
            
            \coordinate (zero) at (\dx,\dy);
            \coordinate (one) at (0,0);
            \coordinate (two) at (\dx,2*\dy);
            \coordinate (three) at (2*\dx,0);
            \coordinate (four) at (\dx,3*\dy);
            \coordinate (five) at (0,3*\dy);
            \coordinate (six) at (2*\dx,3*\dy);
        
            \draw (zero) node [below] {$0$};
            \draw (one) node [left] {$1$};
            \draw (two) node [left] {$2$};
            \draw (three) node [right] {$3$};
            \draw (four) node [above] {$4$};
            \draw (five) node [left] {$5$};
            \draw (six) node [right] {$6$};
                    
            \fill[black] (zero) circle [radius=1pt];
            \fill[black] (one) circle [radius=1pt];
            \fill[black] (two) circle [radius=1pt];
            \fill[black] (three) circle [radius=1pt];
            \fill[black] (four) circle [radius=1pt];
            \fill[black] (five) circle [radius=1pt];
            \fill[black] (six) circle [radius=1pt];

            \draw[black,-,thin] (zero) -- (one);
            \draw[red,-,thin] (zero) -- (two);
            \draw[black,-,thin] (zero) -- (three);
            \draw[black,-,thin] (two) -- (one);
            \draw[black,-,thin] (three) -- (two);
            \draw[black,-,thin] (one) -- (three);
            \draw[black,-,thin] (one) -- (five);
            \draw[black,-,thin] (six) -- (three);
            \draw[black,-,thin] (four) -- (five);
            \draw[black,-,thin] (four) -- (six);
            \draw[red,-,thin] (four) -- (two);
            \draw[black,-,thin] (two) -- (five);
            \draw[black,-,thin] (two) -- (six);
        \end{tikzpicture}
        }

        \resizebox{40mm}{!}
        {
        \begin{tikzpicture}
        
            \def \dx {1}
            \def \dy {1}
            
            \coordinate (zero-two) at (\dx,1.5*\dy);
            \coordinate (one) at (0,0);
            \coordinate (three) at (2*\dx,0);
            \coordinate (four) at (\dx,3*\dy);
            \coordinate (five) at (0,3*\dy);
            \coordinate (six) at (2*\dx,3*\dy);
        
            \draw (zero-two) node [left] {$02$};
            \draw (one) node [left] {$1$};
            \draw (three) node [right] {$3$};
            \draw (four) node [above] {$4$};
            \draw (five) node [left] {$5$};
            \draw (six) node [right] {$6$};
                    
            \fill[black] (zero-two) circle [radius=1pt];
            \fill[black] (one) circle [radius=1pt];
            \fill[black] (three) circle [radius=1pt];
            \fill[black] (four) circle [radius=1pt];
            \fill[black] (five) circle [radius=1pt];
            \fill[black] (six) circle [radius=1pt];

            \draw[black,-,thin] (zero-two) -- (one);
            \draw[black,-,thin] (zero-two) -- (three);
            \draw[black,-,thin] (one) -- (three);
            \draw[black,-,thin] (one) -- (five);
            \draw[black,-,thin] (six) -- (three);
            \draw[black,-,thin] (four) -- (five);
            \draw[black,-,thin] (four) -- (six);
            \draw[red,-,thin] (four) -- (zero-two);
            \draw[black,-,thin] (zero-two) -- (five);
            \draw[black,-,thin] (zero-two) -- (six);
        \end{tikzpicture}
        }
    \end{multicols}
    \caption{The (red) complex $K = (0-2-4)$ in the triangulation $M$ (left). 
    %of a disk. 
The (red) complex $K'=(02-4)$ in the triangulation $M'$ (right). 
Both $K',M'$ are obtained from $K,M$ by valid contraction of the edge $(0-2)$. 
The complex $K$ is obtained from $K'$ by edge subdivision, but $M$ cannot be obtained from $M'$ by edge subdivision.} 
    \label{fig:valid-edge-cont}
\end{figure}

(i) (added in this arXiv version 7 comparatively to version 6) 
P.~4, proof of Theorem~2. 
The proof does not explicitly use the assumption that $M$ is a PL manifold, although the analogue of Theorem~2 for complexes that are not manifolds is incorrect.
Presumably, the usage of this assumption is hidden 
%implicitly used 
in the phrase `It follows that the resulting triangulation $M'$ of $M$ is the desired one'. 
In this phrase it is not proved that the obtained complex $M'$ is PL homeomorphic to $M$, i.e. is indeed a triangulation of $M$.
This could be proved using the following conjecture (whose analogue for complexes $K$ is incorrect): \emph{the complex obtained from a triangulation $K$ of a PL manifold by valid edge contraction is PL homeomorphic to $K$}. 
The paper gives no proof or reference to this conjecture\footnote{Even if this is only lack of an explicit statement and a reference, not a non-trivial conjecture used without an explicit statement and a proof, this contributes to gradual loss of knowledge of PL topology, of which the authors complain before Example~3. 
\newline
For a reader's convenience recall the used result of Alexander [2, Corollary 10:2d and page~299] and Newman [6, pages~187--188] (we use the citation as in \cite{AP24}): \emph{any two of PL homeomorphic complexes can be obtained one from the other by a sequence of edge subdivisions and inverse operations}, cf.~\cite[page~2]{Go13}.} 
(and no other proof that $M'$ is PL homeomorphic to $M$). 

The conjecture is non-trivial because there is a valid edge contraction $\Psi$ in a triangulation of a PL manifold such that $\Psi$ is not an inverse to edge subdivision, although the restriction of $\Psi$ to some subcomplex is an inverse to edge subdivision in the subcomplex (Figure~\ref{fig:valid-edge-cont}). 
  
%It is not correct that any valid edge contractions is inverse to some edge subdivision, see e.g. their example in Fig.~3 where $\Delta$ could not be obtained by any edge subdivision from \(C_e\Delta\).
%виноват, написано херово, лучше не читать: 
%https://arxiv.org/pdf/math/9911256
\end{remark}

There are complexes PL embeddable into $\R^d$ but for which there are no embeddings into $\R^d$ affine on any simplex \cite{vK41, PW}. 
See stronger results in \cite[Theorems 1.4, 2.1, 2.4]{FK13}. 
Although these results do not refute Conjecture \ref{c:adi} (cf. Remark \ref{r:histc-adpa}.d), it is interesting if the complexes can be used to construct counterexamples to Conjecture~\ref{c:adi}. 
%used for these results 

\begin{remark}\label{r:aplms} 
Below we present the Editors' reply to the criticism of Remark \ref{r:histc-adpa} (technically speaking, to the letter of Remark \ref{r:ed-let}.a). 
The Editors kindly allowed to publish their reply (cf. Remark \ref{r:ed-let}.b). 
This cannot but deserve respect.    
However, we are sorry to justify in Remark \ref{e:ap-blms} that the reply is not up to the reliability and clarity standards expected from a refereed journal (see Example \ref{e:our-blms}). 
References in the reply are to \cite{AP24}, except the place where we added 
`[DS: of arXiv:2208.04188v7]'. 

\textbf{Apr 12, 2025} \quad \emph{Dear Professors Skopenkov and Dzhenzher,}

Many thanks for your patience. Following receipt of your message dated 31 March concerning the paper 'A higher dimensional version of Fáry's theorem’ [Bull. London Math. Soc., February 2025], we consulted the Section Editor for Geometry and Topology, the Handling Editor of the paper, and the expert referee. 
They do not concur with your assessment that there are important gaps in the proof of Theorem 2 of the aforementioned article.

Specifically, regarding the mathematical portions of Remark 6.7 
%[SA: \ref{r:histc-adpa}]
in arXiv:2208.04188v7 (note that it is this version we have examined), we have agreed upon the following responses:

(b) The first bullet point follows from the description of biased barycentric subdivision (sometimes called partial barycentric subdivision) in terms of chains given in the proof of the lemma, and illustrated in Figure 1.2, while the second bullet point follows from the definition of induced subcomplex (usually called full subcomplex). 

(c) This paper itself is making no direct claims about the Kuehnel conjecture. 
Separately, the question that is addressed in the main result of the paper does not appear to be a `famous conjecture'. 

(d) It is indeed a simplicial rather than an affine version, but nevertheless very much a Fáry-type theorem. 
As such, the title (and the other phrases referred to) are acceptable (as a certain level of critical engagement can be expected of the reader).

(e) The ‘obvious fact' follows from the definition of 'strongly induced subcomplex' (though some further details may indeed have helped the reader).

(i) "collapsing a valid edge in a PL manifold is a PL homeomorphism'' is an elementary fact in PL topology, not a conjecture. 
(The fact that "not every edge contraction is the inverse of an edge subdivision'' does not change this.) 

Regarding the paragraph preceding Remark 6.8 [DS: of arXiv:2208.04188v7], there is no reason to believe that the examples of complexes that embed PL but not affinely in $\R^d$ would lead to counterexamples to Theorem 2. 

Overall, we have found the exposition of the paper  to be within the bounds of clarity and correctness that we normally expect of publications in the journal. 

In this instance, it is our collective view that the authors of [AP25] do not have a case to answer. 
We regard the matter as closed.

Sincerely,

Minhyong Kim and Julia Wolf
\end{remark}

%copy to \cite{Sk21d}?

\begin{example}[on standards of reply to a public criticism, cf. \cite{Sk21d}]\label{e:our-blms} 
%In our opinion, the reliability standards we share are the publicly asserted reliability standards of refereed journals, cf. \cite{Sk21d}. 
The reliability standards expected from refereed journals 
%These standards 
require the following. 

A reply to a public requirement (from a scientist having some decent papers in the area, e.g. in PL topology) of a reference to a statement should give either a reference to (a specific result in) a textbook, or a several-line deduction from (specific results in) a textbook. 

A reply to a public criticism of a proof by a scientist (having some decent papers in the area) should assume that the scientist have read the proof. 
%[if  publicly criticizes  missing details of the proof, then]
So the scientist's critial remark `\emph{it is not proved that, and it is not clear why [a statement]}' means `\emph{it is not proved that, and it is not clear why [a statement], even taking into account the definitions of the objects involved in the statement, and the previous sentences of the proof}'.\footnote{The burden of proof lies with those who assert the statement.  
\newline
By public requirement of details of the proof (in particular, of a reference) a scientist takes the reputation risk of missing a trivial proof (in particular, a reference in a textbook). 
So it is only natural that those who assert the statement take the risk of serving scientific community, either by providing necessary details of the proof, or by seeing that the missing argument they have in mind is fallible (in particular, that the necessary result is not present in textbooks). 
\newline 
See e.g. post-publication arxiv updates of \cite{Sk05, Sk08, GS06} (prepared after private not public criticism). 
}  

In particular, nothing of the following can contribute to a reliable justification of a statement.   

(a) The information that the statement follows from 

$\bullet$ the definitions of the objects involved in the statement, or

$\bullet$ the previous argument, even if the relevant two of the four sentences of the argument are specified.   

%(because the scientist who published public criticism has read the definitions, the previous argument but still published the criticism). 
See e.g. Remark \ref{r:aplms}.b.  
%a repetition (in different wording) of the very sentence to which the question was asked; 

(b) A figure, especially a low-dimensional figure in the proof of a higher-dimensional statement. 
See e.g. Remark \ref{r:aplms}.b.  

(c) An information on different terminology. See e.g. Remark \ref{r:aplms}.b.   

(d) Naming the statement elementary, without giving either a reference or a deduction. 
(If this comes from a person or from an organization of authority, then this a logical fallacy `argument from authority' \cite{AA}.) See e.g. Remark \ref{r:aplms}.i.   

(e) A speculation that the non-triviality of the statement is not sufficiently illustrated by some given fact. 
See e.g. Remark \ref{r:aplms}.i, the sentence in parenthesis.  
%\footnote{Such a speculation cannot contribute to the justification of the statement being elementary, and does not change the necessity of a reference or a proof for the statement.} 

(f) Lack of a counterexample to the statement (or lack of reasons to believe that some ideas to construct a counterexample work). 
See e.g. Remark \ref{r:aplms}, after (i).    

(g) A `straw man' logical fallacy  \cite{SM}. 
See e.g. Remark \ref{e:ap-blms}.cj. 

\smallskip
If a reply to the above public requirement / critisism states that the authors of a criticized text do not have a case to answer, then the replying authors / organization should publish a reference to the criticism. 
This would show that the statement `do not have a case to answer' stands the test of being read together with the critisism. 
Lack of publication of a reference to the criticism is \emph{suppression of criticism}. 
Such lack of publication shows that the authors of a criticized text do have a case to answer, but attempt to hide the fact that their reliability standards are lower than those expected from refereed journals. 

%\footnote{This can contribute to informal discussions, together with lack of reasons to believe some ideas to construct a counterexample do not work.}
%(It is to be assumed that the scientist have read the most influential basic book(s) in the area.)   
%An argument from authority[a] is a form of argument in which the opinion of an authority figure (or figures) is used as evidence to support an argument.[1]
%The argument from authority is a logical fallacy,[2] and obtaining knowledge in this way is fallible.[3][4]
%A common cognitive bias is that it is a practical and sound way of obtaining knowledge that some presume to be correct when the authority is universally accepted, though some consider this to be an obvious circular reasoning and repetition of an argument from authority[1][5][6][7][8][9][10] and others consider to be a very weak defeasible argument or an outright fallacy.[2][11][12][13][14] The mere fact that an authority can be wrong or decide to lie means any appeal to authority must be further proved through valid logical deduction of the evidence anyway.[15][16][17] 
\end{example}

\begin{remark}\label{e:ap-blms} 
(a) In (b,c,i,j) we justify that the reply from Remarks \ref{r:aplms}.bci (and from the sentence after (i)) to the critical Remark \ref{r:histc-adpa} is not up to the reliability and clarity standards described in Example \ref{e:our-blms}. 
Thus the (important) critical Remarks \ref{r:histc-adpa}.bi remain unanswered.   

In (c,d) we show that the clarity standards (see \cite[footnote 1]{Sk21d}) of Bull. LMS leave it to the readers to cope with as misleading title and introduction as 

$\bullet$ hiding the claim for a famous conjecture;\footnote{The result of such a hiding is distraction of people's attention from reliability of the proof, and from unanswered public criticism of the proof, cf. Remark \ref{r:ggks}.} 

$\bullet$ giving name `a \emph{higher-dimensional} version of theorem A' to a result which is different from theorem A in \emph{low dimensions}. 

%(and which does not imply F\'ary's theorem)
%, the claim for K\"uhnel conjecture on embeddings would be unjustified even if reliable proof of \cite[Theorem 2]{AP24} would appear.   
%described in Remark \ref{r:histc-adpa}.cd. 
%In particular, 
%$\bullet$ the reply from Remark \ref{r:aplms}.c (and from the sentence after (i)) involves logical fallacies.  
%In (k) we justufy that in that letter `your assessment that there are important gaps in the proof of Theorem 2' is a definist logical fallacy (as descried in  \url{https://en.wikipedia.org/wiki/List_of_fallacies#Informal_fallacies}; see also `a straw man argument' of \url{https://www.grammarly.com/blog/rhetorical-devices/logical-fallacies/}).  
%Definist fallacy – defining a term used in an argument in a biased manner (e.g., using "loaded terms"). The person making the argument expects that the listener will accept the provided definition, making the argument difficult to refute.

In the items below, the necessary material from Remark \ref{r:histc-adpa} is shortly recalled (so the justification can be read without reading Remark \ref{r:histc-adpa}, unless a reader wants to check that our references to Remark \ref{r:histc-adpa} are faithful). 

(b) Remark \ref{r:histc-adpa}.b asked for a proof of two statements (bullet points) in the proof of \cite[Lemma 6]{AP24}. 
The reply (Remark \ref{r:aplms}.b) consists only of 

$\bullet$ the information that the first statement follows from the previous two sentences (of the previous four sentences) in the proof of \cite[Lemma 6]{AP24}: `\emph{The first bullet point follows from the description of biased barycentric subdivision in terms of chains given in the proof of the lemma...}';  

$\bullet$ the information that `\emph{the second bullet point follows from the definition of induced subcomplex}' (that is, from the definition of the objects involved in the second bullet point);

%NO a repetition (in different wording) of the very sentence \cite[Lemma 6]{AP24} (`\emph{Since $\Gamma$ is induced in $\Delta$, the latter is a face}') to which the question was asked: 

$\bullet$ an information on different terminology: `\emph{(sometimes called partial barycentric subdivision)}',  `\emph{(usually called full subcomplex)}'; 

$\bullet$ a reference to a low-dimensional figure: `\emph{and illustrated in Figure 1.2}'.  

See Example \ref{e:our-blms}.abc.\footnote{Not only we have carefully read the proof, but we discussed possibility of recovery of the unproved statements with mathematicians of different nationalities and backgrounds, see footnote \ref{f:clever}.} 

(c) Remark \ref{r:histc-adpa}.c states and justifies that `\emph{The introduction hides the claim for ... the K\"uhnel conjecture on embeddings}'.
The reply (Remark \ref{r:aplms}.c) ignores the raised problem of `hiding'.  
In particular, the phrase `\emph{the question that is addressed in the main result of the paper does not appear to be a `famous conjecture'}' is the `straw man' logical fallacy \cite{SM} (indeed, the critical Remark~\ref{r:histc-adpa}.c does not assert the opposite, i.e that `the question that is addressed in the main result of the paper is a `famous conjecture''). 

(d) The reply (Remark \ref{r:aplms}.d) agrees with the critical Remark~\ref{r:histc-adpa}.d (although the reply uses different wording). 

(i) Remark~\ref{r:histc-adpa}.i asked for a reference or a proof of a statement used (without being explicitly stated) in the proof of \cite[Theorem~2]{AP24}. 
The reply (Remark \ref{r:aplms}.i) does not provide a reference or a proof, but only states that the statement `\emph{is an elementary fact in PL topology}'. 
(The parenthetical remark in the reply uses this unjustified statement as correct but does not contribute to its justification.). 
See Example \ref{e:our-blms}.de.\footnote{See   \url{https://old.mccme.ru//circles//oim/home/LIBRARY.pdf}. 
We certainly read \cite{RS72}, and before asking for the reference consulted it once again.}  
%and have asked some specialists (which we certainly did).

(j) The paragraph preceding Remark 6.8 is not a part of critical Remark \ref{r:histc-adpa}. 
So its discussion does not contribute to answering the criticism, and its inclusion into a reply to critical Remark~\ref{r:histc-adpa} is the `straw man' logical fallacy \cite{SM}.
See Example \ref{e:our-blms}.fg. 
\end{remark}

\small

\emph{Conventions for the following remarks.}
In the letters the references are updated.
In spite of that, the references are given to the very version of a paper cited in the letter.
Otherwise the letters are not changed (the grammar is not corrected).

\begin{remark}\label{r:ap-let} 
In order to avoid confusion, we present here our letter to the authors of \cite{AP24}, and our replies to the letters from J.~D.~Paik and K.~Adiprasito on our criticism of (arXiv version~2 of) \cite{AP24}.  
(They did not give us their permission to publish their letters.)

\vspace{2pt}
(February 25, 2025) \quad \emph{Dear Zuzana, Dear Karim,} 

Attached please find a planned update of arXiv:2208.04188 [added later: this update is arXiv:2208.04188v6]. 
We would be grateful for any comments or remarks, but please do not feel obliged. 

Remark 6.7 contains some criticism of your interesting paper arXiv:2404.12265v2. 
It would be nice if you could update your paper on arXiv according to this criticism. 

In order to make this discussion responsible, let us make it public. 
We perpetually release copyright for this letter to the public domain. 
If you kindly choose to reply to this letter, not just update your paper on arXiv, could you please perpetually release copyright for your reply to the public domain.

Sincerely Yours, Slava and Arkadiy. 

% \smallskip
\vspace{2pt}
(March 8, 2025) \quad \emph{Dear Joshua David,}

Thank you for your letter disagreeing with our (arXiv:2208.04188v6) criticism 
of an unjustified claim to a `new triangulation version' of F\'ary's theorem (Conjecture 6.6 from arXiv:2208.04188v6), 
and mentioning Dima Burago, Yuri Burago, and Anton Petrunin. 
Our criticism is justified by specific critical remarks of Remark 6.7 from arXiv:2208.04188v6. 
Your letter does not disagree with any of those specific critical remarks. 
So in our opinion, your letter jeopardizes your reputation as a scientist. 

If your opinion is different, we welcome its responsible pronunciation. 
In order to make this discussion responsible, we have to make it public.
We encourage you to put your disagreement to arXiv, or to perpetually release copyright for your letter to the public domain.  
We would be glad to present critical remarks to (or praise) any text publicly available on arXiv (or in the public domain by author's perpetually releasing copyright), and relevant to our paper.

Sincerely Yours, Slava and Arkadiy.

% \smallskip
\vspace{2pt}
(March 8, 2025) \quad
\emph{Karim,}
 
This is libel, so please make it public so that you would assume responsibility for the libel. 

A. 

\vspace{2pt}
(March 9, 2025) \quad
\emph{Dear Karim,}

Thank you for forwarding us your letter to Josh naming our criticism (arXiv:2208.04188v6, of an unjustified claim to a `new triangulation version' of Fary's theorem) `incompetence' . 
Our criticism is based on specific critical remarks of Remark 6.7 of arXiv:2208.04188v6. 
Your letter contains no criticism of those specific critical remarks. 
So, in our opinion, your letter jeopardizes your reputation as a scientist. 

If your opinion is different, we welcome its responsible pronunciation. 
In order to make this discussion responsible, we have to make it public.
We encourage you to put your disagreement to arXiv, or to perpetually release copyright for your letter to the public domain.  
We would be glad to present critical remarks to (or praise) any text publicly available on arXiv (or in public domain by author's perpetually releasing copyright for the text), and relevant to our paper.

Sincerely Yours, Slava and Arkadiy.
\end{remark}

\begin{remark}\label{r:ed-let} 
In order to avoid confusion, we present here our letters to the Editors of the journal where the paper \cite{AP24} was published. 

% lmsjournals@lms.ac.uk, minhyong.kim@icms.org.uk, julia.wolf@dpmms.cam.ac.uk
\vspace{2pt}
\textbf{(a) March 31, 2025}

Dear Managing Editors Minhyong Kim and Julia Wolf, Dear Editors,  

Hope you are fine and healthy. 

We write this letter on the understanding that LMS strives to ensure that its journals' contribution to the published record is reliable 
(\url{https://www.lms.ac.uk/publications/policies/ethicalpolicy}). 
We feel obliged to inform you that Theorem 2 claimed in 

[AP24] := \url{https://londmathsoc.onlinelibrary.wiley.com/doi/10.1112/blms.70036}  

is not reliable, and the introduction is misleading. 
See justification by specific critical Remarks 6.7 (b,c,d,e,g,h) of 
[DS] := arXiv:2208.04188v6.   
In these remarks we refer to the arXiv:2404.12265v3 version of [AP24]; all the remarks are relevant to the published version.
See also Remark 6.7.i from the attached update of [DS] (added later: this update is arXiv:2208.04188v7). 

Now that the responsibility for the claim is shared by the journal, it would be nice if the Editors could ask for the authors' and the referees' answers to these specific remarks. 
Moderation of such a discussion by the Editors (or just giving reference to arXiv published criticism of the paper published by LMS), would confirm high reliability and clarity standards of the journal.
The result of such a discussion could be either rejection of the paper, or reliable revision of the paper, or arXiv publication of answers to our (arXiv published) specific critical remarks. 
In order not to mislead the readership, either the publication could be suspended, or a reference to the ongoing discussion of reliability of the proof (and clarity of the introduction) could be added.  
We would only be glad if the revision taking into account [DS, Remarks 6.7 (b,c,d,e,g,h)] (and Remark 6.7.i from the attached update of [DS]) could be published.

Sincerely Yours, Arkadiy Skopenkov and Slava Dzhenzher.
 
PS We sent the criticism to the authors of [AP24] even before the v6 update of [DS]. 
After the appearance of the v6 update, we received letters expressing disagreement with our criticism. 
Although our criticism is justified by specific critical remarks, disagreement with our criticism did not consider any of those specific critical remarks. 
In order to avoid confusion, we present our replies to the disagreement in Remark 6.8 from the attached update of [DS].
Remark 6.9 of that update is a copy of this letter. 
We plan to publish the information on the Editors' reply (or on the lack of it).

PSS We do understand that the paper [AP24] was refereed. 
However, the referee did not give even those specific critical remarks which are obvious (even to a non-specialist in the area), see e.g. [DS, Remark 6.7.d]. 
So it is not surprising that a referee missed a gap in the (quite technical) proof. 
%(E.g. we do see some natural ideas of filling the gap mentioned in [DS, Remark 6.7.b], but we were unable to transform them into a reliable proof.)
%In any case it is incumbent upon authors (and the referees now that the paper is published) to provide convincing proofs, and not upon the readers to justify that the gap is essential. 
%, cf.  \url{https://www.lms.ac.uk/publications/blms/peerreview}.  (in particular, `Solutions to famous problems'; this should be compared to [DS, Remark 6.7.c]). 
%Footnotes 16 and 17 from [DS] show that the appearance of [AP] was preceded by our April 2024 discussion (with Karim Adiprasito) of a referee report wrongly attributing to the paper  arXiv:1812.10454v4 solution of  the Kühnel conjecture for PL embeddings. 

\vspace{2pt}
\textbf{(b) April 13, 2025}

Dear Minhyong Kim and Julia Wolf, 

Many thanks for your reply to our critical remarks on the paper

[AP25] := [Adiprasito-Patakova, Bull. London Math. Soc., February 2025]. 

The reply is very interesting to the math community, as a revelation of the bounds of clarity and correctness that the Editors of Bull. LMS journals normally expect of publications in the journal. 
So would you kindly allow us to publish the reply? 
Technically speaking, would you release the copyright for the reply to the public domain? 
Please feel free to make any changes in your reply before making it public. 

In an update of  arXiv:2208.04188 we shall either add your reply, or this letter and the information that you did not allow us to make the reply public. 
The first option would be a sound confirmation that our critical remarks on [AP25] are so far answered in a responsible way. We are afraid that the second option would be a sound confirmation that our (openly published in arXiv:2208.04188) critical remarks on [AP25] are justified, and cannot be answered in a responsible way. 

Sincerely Yours, Arkadiy Skopenkov and Slava Dzhenzher.

\vspace{2pt}
\textbf{(c) June 19, 2025}

Dear Managing Editors Minhyong Kim and Julia Wolf,

Hope you are fine and healthy.

Thank you for allowing us to publish your reply to a criticism of a paper published in Bull. LMS,  

[AP24] := \url{https://londmathsoc.onlinelibrary.wiley.com/doi/10.1112/blms.70036} 
\linebreak
(=arXiv:2404.12265v3). 

This cannot but deserve respect. 
However, we are sorry to justify (in Remark 6.10) that the reply (Remark 6.8) is not up to the reliability and clarity standards expected from a refereed journal (see Example 6.9).
The numbering is from the attached update of arXiv:2208.04188. 
Unless we receive your public reply, we plan to make the update publicly available in a week. 
Your public reply to Example 6.9 and Remark 6.10 is very interesting to the math community, as a revelation of the bounds of clarity and correctness that the Editors of Bull. LMS journals normally expect of publications in the journal. 

We value our collaboration with the London Mathematical Society very much (the more so because Arkadiy is grateful to LMS for the 2002 invitation to deliver research lectures in Great Britain). 
More specifically, we would be pleased 

$\bullet$ to postpone arXiv update of arXiv:2208.04188, and 

$\bullet$ to publish in a later revised update, instead of your reply (Remark 6.8) and its criticism (Remark 6.10), a revised version of your reply (which hopefully would not require such a harsh criticism). 

For this one needs LMS to publish on the journal's website the information on the ongoing discussion whether the reliability (and clarity) standards of [AP24] are up to the standards of LMS, 
%and what are the reliability (and clarity) standards of LMS, 
with a link to the published criticism of [AP24] in \S6 of arXiv:2208.04188v7.

This would be a revelation of the high transparency standard of LMS (see arXiv:2101.03745v3, Remark 4.2.a), and its respect to responsible professional criticism.  
 
Sincerely, Arkadiy Skopenkov and Slava Dzhenzher.
\end{remark}

\normalsize

}

\aronly{
 
\section{Appendix: on the property of  non-triviality}\label{s:appnotr}
 
Recall that for a general position map $g\colon [3]^{*k+1}\to\R^{2k}$ the \emph{van Kampen number} $v(g) \in \Z_2$ is the parity of the number of all unordered pairs $\{\sigma,\tau\}$ of vertex-disjoint $k$-faces of $[3]^{*k+1}$ such that $\abs{g\sigma \cap g\tau}_2 = 1$.
% \emph{intersection cocycle} $\nu(g)$ is the set of all unordered pairs $\{\sigma,\tau\}$ of vertex-disjoint $k$-faces of $[3]^{*k+1}$ such that $\abs{g\sigma \cap g\tau}_2 = 1$.
% The residue $v(g) := \babs{\nu(g)}_2 \in \Z_2$ is known as the \emph{van Kampen number}.

 \begin{lemma}[van Kampen; {\cite[Satz 5]{vK32}}]\label{lem:vanKampen}
     For any general position map $g\colon [3]^{*k+1}\to\R^{2k}$ we have $v(g) = 1$.
\end{lemma}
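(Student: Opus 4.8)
The statement is the higher-dimensional van Kampen–type obstruction computation for $[3]^{*k+1}$: every general position PL map $g\colon [3]^{*k+1}\to\R^{2k}$ has an odd number of unordered pairs of non-adjacent $k$-faces meeting in an odd number of points. The plan is to reduce this to the classical fact that the van Kampen obstruction of the $k$-skeleton $\Delta^{2k+2}_k$ (equivalently, of a suitable subcomplex) is nonzero mod $2$, i.e. that $K$ is not $\Z_2$-embeddable in $\R^{2k}$. Concretely, $[3]^{*k+1}$ is the $(k+1)$-fold join of three points, which is the boundary of the $(k+1)$-fold join of a segment; its $k$-faces are exactly the $k$-octahedra $P_1*\cdots*P_{k+1}$ with each $P_i$ a $2$-element subset of $[3]$. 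I would first recall that $\babs{\nu(g)}_2$ is a well-defined invariant of the regular homotopy class of $g$ — it does not depend on $g$, only on the complex $[3]^{*k+1}$ — because a single Reidemeister-type finger move (pushing a $k$-face across a $k$-face across which it is non-adjacent) changes the count by an even number, and a move across an adjacent face changes it by $0$ since such pairs are excluded; this is the standard deleted-product / Haefliger–Weber bookkeeping. Hence it suffices to exhibit one general position map $g_0$ for which $\babs{\nu(g_0)}_2 = 1$.

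For the explicit model I would use the join structure: choose in $\R^{2k}=\R^2*\cdots*\R^2$ ($k+1$ factors, using $\R^{2k+1}$-coordinates and projecting, or more cleanly $\R^{2k}\supset$ the join of $k+1$ copies of $\R^1$ placed in general position) the map $g_0 = g_1 * \cdots * g_{k+1}$ where each $g_i\colon [3]\to\R^1\subset\R^{2k}$ places the three points of the $i$-th join factor on a line in general position. Then a non-adjacent pair of $k$-faces $P=P_1*\cdots*P_{k+1}$ and $Q=Q_1*\cdots*Q_{k+1}$ (non-adjacency means $P_i\cap Q_i=\emptyset$ for all $i$, hence $\{P_i,Q_i\}$ is a partition-type configuration inside $[3]$, so $|P_i|=|Q_i|=2$ forces $|P_i\cap Q_i|\geq 1$ — wait: in $[3]$, two $2$-element subsets always intersect, so in fact there are no fully non-adjacent octahedra unless we allow faces of lower dimension in some factors). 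This is the key subtlety: the relevant $k$-faces of $[3]^{*k+1}$ are joins $\nu_1*\cdots*\nu_{k+1}$ where each $\nu_i\subset[3]$ has $|\nu_i|\in\{1,2,3\}$ with $\sum(|\nu_i|-1)=k$, and non-adjacency of $\nu*\mu$ means $\nu_i\cap\mu_i=\emptyset$ in every factor. So the combinatorics is exactly that of the $k$-skeleton of the simplex on $3(k+1)$ vertices restricted to a "block" structure — equivalently $[3]^{*k+1}$ is the join of $k+1$ copies of the three-point set, the $(k+1)$-fold join of $S^0$-triples, and I would invoke that the van Kampen $\Z_2$-obstruction of an $(k+1)$-fold join is the product (under the join/smash formula for deleted products) of the obstructions of the factors, each factor $[3]=$ three points having van Kampen number $1$ in $\R^0$ (three points cannot be "$\Z_2$-embedded" — a deleted-product Borsuk–Ulam count giving $1\bmod 2$). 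The join multiplicativity of the deleted-product obstruction then yields $1^{k+1}=1\bmod 2$.

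So the concrete steps are: (1) set up the deleted product $\widetilde{([3]^{*k+1})}$ and identify $\babs{\nu(g)}_2$ with the value of the mod-$2$ van Kampen cohomology class evaluated on the fundamental class, hence a $g$-independent invariant; (2) prove invariance directly via finger moves if one wants to avoid citing Haefliger–Weber machinery (only the non-adjacent $k/k$ intersections matter, and each elementary move flips an even number of them); (3) compute the invariant for the join model $g_0=g_1*\cdots*g_{k+1}$ using the join formula for deleted products, $\widetilde{(A*B)}\simeq \widetilde A * \widetilde B$ (with the appropriate $S^0$-action), reducing to the base case $k=0$; (4) verify the base case $k=0$ by hand: a general position map $[3]\to\R^0$ is just an assignment of the three points to the point $\R^0$, all three "coincide," and the single non-adjacent pair of $0$-faces (two of the three points, versus the third — no, $0$-faces are single points, non-adjacent pairs are pairs of distinct points, there are $\binom32=3$ of them, each pair "intersects" at the origin in one point, giving $3\equiv 1\bmod 2$); more carefully for $S_{[2]^{*k+1}}$ the relevant count is the specific octahedral sum $\sum_{x}$ appearing in the definition of $S_{[2]^{*k+1}}A$, which for $g_0$ evaluates to a product of $k+1$ parities each equal to $1$. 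The main obstacle I expect is step (3)/(4): making the join-multiplicativity of the mod-$2$ intersection count precise enough to conclude — one must check that crossings of $g_0=g_1*\cdots*g_{k+1}$ factor as crossings in the individual factors (so that the parity multiplies), which requires choosing the $\R^2$ (or $\R^1$) slots in sufficiently general position and carefully tracking which pairs $(\nu*\mu)$ are non-adjacent and of complementary dimension; equivalently, one can avoid joins entirely and just exhibit $g_0$ combinatorially and count $\nu(g_0)$ by an explicit but routine sign/parity computation, which is presumably the "purely combinatorial" Proposition \ref{p:sumSq} alluded to in Remark \ref{r:idea}.b. I would lean on that combinatorial lemma for the final parity evaluation rather than re-deriving join-multiplicativity of the van Kampen obstruction from scratch.
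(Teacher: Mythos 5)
Your plan is the same in outline as the paper's (show $\babs{\nu(g)}_2$ is independent of the general position map $g$, then compute it for one explicit $g$), but both steps as you describe them have real gaps.

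\textbf{Invariance.} You assert that $\babs{\nu(g)}_2$ is an invariant of the complex ``because a finger move changes the count by an even number; this is standard deleted-product bookkeeping.'' This is not standard bookkeeping: for a generic simplicial complex $K$, the parity $\babs{\nu(g)}_2$ is \emph{not} independent of $g$ --- only the cohomology class of $\nu(g)$ is. What a generic finger move does is add to $\nu(g)$ an elementary coboundary: for a $(k-1)$-face $e$ non-adjacent to a $k$-face $\alpha$, the set of pairs $\{\alpha,\beta\}$ with $e\subset\beta$. The parity $\babs{\nu(g)}_2$ is a $g$-invariant precisely when every such coboundary has even size, which is a special property of the complex that must be checked. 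The paper does this: for $[3]^{*k+1}$ and $e$ with $e_i=\varnothing$, the faces $\beta\supset e$ non-adjacent to $\alpha$ are those with $\beta_j=e_j$ for $j\neq i$ and $\beta_i\in[3]\setminus\{\alpha_i\}$, giving exactly $2$ choices, so each coboundary has size $2$. (The paper cites \cite{Sh57} for the cohomologous statement and then does this count.) Without this verification your invariance step is not justified, and your description of the finger move as ``pushing a $k$-face across a $k$-face'' is not the correct move (one pushes across a $(k-1)$-face).

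\textbf{Explicit computation.} Here you propose two routes, neither of which you carry out. (i) The join model $g_0=g_1*\cdots*g_{k+1}$ has a dimension mismatch: the join of $k+1$ copies of a map to $\R^1$ naturally lands in $\R^{2k+1}$, not $\R^{2k}$, so you would have to project and re-establish general position, and you would still need to prove a join-multiplicativity formula for the van Kampen number, which you yourself flag as the hard step. (ii) You then propose to ``lean on Proposition \ref{p:sumSq}'' for the parity evaluation, but that proposition is the wrong tool: \ref{p:sumSq} is a purely set-theoretic identity relating $D[2]^{*k+1}$ and $D[3]^{*k+1}$, and it is used in the proof of Lemma~\ref{lem:capToNu}, not Lemma~\ref{lem:vanKampen}. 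It does not evaluate $\babs{\nu(g)}_2$ for any map. The paper's actual computation is different: it takes $g$ linear on the moment curve in $\R^{2k}$, invokes the alternation criterion of \cite[Lemma~D.2]{MTW} to characterize which pairs of non-adjacent $k$-faces intersect, reduces this to ``either $\sigma_i<\tau_i$ for all $i$ or vice versa,'' and counts $3^{k+1}\equiv 1\pmod 2$ such ordered tuples. So the key concrete step --- a clean combinatorial count of intersecting pairs for a specific $g$ --- is missing from your proposal.
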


The following proof, except the last paragraph, is alternative to known proofs.\footnote{The first part of the proof (namely, everything except the last paragraph) is construction of {\it some} map $g\colon [3]^{*k+1}\to\R^{2k}$ such that $v(g) = 1$.
%    So in that proof Melikhov did not use the moment curve.
    In the original proof van Kampen constructed another such $g$, splitting vertices on groups of three and placing the groups in different hyperplanes.
    Another construction of such a map $g$ is given in \cite[the second paragraph of Example~3.5]{Me06}: take $n=k$ and $n_i=0$ for $i\in[k+1]$; take $*\Delta^{n_i} = 1^{*k+1}$ and $*\partial\Delta^{n_i+1} = [2]^{*k+1}$.}
 
\begin{proof}[Proof of Lemma~\ref{lem:vanKampen}]
    Let $\gamma(t) = (t,\ldots,t^{2k})$ be the moment curve in $\R^{2k}$.
    Let $g\colon [3]^{*k+1}\to \R^{2k}$ be the linear map such that
    \[
        g(\varnothing^{*i}*a*\varnothing^{*k-i}) = \gamma(a+3i)\quad\text{for every}\quad i\in\{0,1,\ldots,k\}\quad\text{and}\quad a\in[3].
    \]
    It is well known that every at most $2k+1$ points on $\gamma$ are affine independent (for proof see e.g.~\cite[Lemma~5]{St24}).
    Then $g$ is a general position map.
    In the following paragraph we prove that $v(g)=1$.
        
%NO In \cite[the first paragraph in the proof of Lemma~D.2]{MTW} it is essentially proved 
%Gale’s evenness condition Theorem 0.7 of G. M. Ziegler. Lectures on Polytopes. Springer-Verlag, Berlin, 1994.

    It is known that for vertex-disjoint $k$-faces $\sigma=\sigma_1*\ldots*\sigma_{k+1}$ and $\tau=\tau_1*\ldots*\tau_{k+1}$ their images $g\sigma$ and $g\tau$ intersect (at a single point) if and only if the vertices of the images alternate along the moment curve\footnote{For clear exposition see~\cite[Lemma~6]{St24}; for an earlier reference see \cite[Theorem]{Br73} (in \cite{Br73} the statement of the Theorem has undefined $A$ and $B$, and uses the term `the primitive Radon partition' defined elsewhere).}.
    The alternation means that either
     \begin{align*}
        &\sigma_1 < \tau_1 < 3+\sigma_2 < 3+\tau_2 < \ldots < 3k+\sigma_{k+1} < 3k+\tau_{k+1}\quad\text{or} \\
        &\tau_1 < \sigma_1 < 3+\tau_2 < 3+\sigma_2 < \ldots < 3k+\tau_{k+1} < 3k+\sigma_{k+1}.    
    \end{align*}
    The alternation is equivalent to `either $\sigma_i < \tau_i$ for every $i \in [k+1]$ or $\sigma_i > \tau_i$ for every $i \in [k+1]$'.
    Then 
    \[
        v(g) =
        \abs{\set{(\sigma_1, \ldots, \sigma_{k+1}, \tau_1, \ldots, \tau_{k+1})\in[3]^{2k+2}\ :\ \text{$\sigma_i < \tau_i$ for every $i\in [k+1]$}}}_2 = 1. 
    \]
    Here the last equality is proved as follows. 
    For such $(2k+2)$-tuples $(\sigma_1,\ldots,\sigma_{k+1}, \tau_1,\ldots,\tau_{k+1})$ every pair $(\sigma_i,\tau_i)$ is either $(1,2)$, or $(1,3)$, or $(2,3)$.
    Then by the Cartesian product rule the number of such $(2k+2)$-tuples is $3^{k+1}\equiv 1 \pmod{2}$.

    Proof that $v(g) = v(g')$ for any general position maps $g,g'\colon [3]^{*k+1}\to\R^{2k}$, repeats the fairly standard part in the last paragraph of the proof of the non-triviality in Lemma~\ref{lem:non-triv} (Embedding).
\end{proof}

%The analogue of van Kampen Lemma \ref{lem:vanKampen} for $\Delta_{2k+2}^k$ is well-known, see e.g. \cite[Lemmas 1.4.3 and 2.2.3]{Sk18}, \cite[first paragraph of Example 3.5]{Me06}. 
%Van Kampen Lemma \ref{lem:vanKampen} implies  non-embeddability of $[3]^{*k+1}$ in $\R^{2k}$. 
 
%Van Kampen Lemma~\ref{lem:vanKampen} also follows from known results on deleted join and deleted product, as well as on the relations between their Yang index, the van Kampen obstruction, and the van Kampen number $\abs{\nu(g)}_2$. 
%In our opinion, the direct proof of van Kampen Lemma \ref{lem:vanKampen} presented below is simpler, and is shorter than recalling and using the statements of those results.

Recall that

\begin{itemize}[nosep]
    \item $H$ is the set of all unordered pairs of vertex-disjoint $k$-faces of $[3]^{*k+1}$;

    \item $G_k$ is the set of unordered pairs of $k$-octahedra from $[3]^{*k+1}$ whose intersection is $1^{*k+1}$;

    \item for $\{P,Q\}\in G_k$ we denote by $T\{P,Q\}$ the set of pairs $\{\alpha,\beta\}$ formed by (not necessary distinct) $k$-faces $\alpha,\beta$ of $[3]^{*k+1}$ such that either $\alpha \in P$ and $\beta \in Q$, or vice versa.
\end{itemize}

\begin{remark}\label{r:deduc-emb-ks21e} 
% \hspace{0pt}
\begin{remarkenumi}
    \item \cite[Lemma~2.3.1.a and footnote~8]{Sk24}
    \label{en:r-deduc-emb-ks21e:real}
    \emph{If a $(k-1)$-connected $k$-complex $K$ is $\Z_2$ embeddable to a $2k$-manifold $M$, then there are a collection of $k$-cycles $y_\sigma$ in $M$, parametrized by $k$-faces of $K$, and a general position map $g\colon K\to\R^{2k}$ such that}
    \[
        y_\sigma \cap_M y_\tau = |g\sigma\cap g\tau|_2\quad\textit{for any vertex-disjoint $k$-faces $\sigma,\tau$ of $K$}.
    \]

    \item\label{en:r-deduc-emb-ks21e:ded-t}
    \emph{Deduction from~\eqref{en:r-deduc-emb-ks21e:real} of the version (stated in Remark~\ref{r:emb-mod-2}.\ref{en:r-emb-mod-2-res}) of Theorem~\ref{t:basis-emb} for $\Z_2$-embeddings.}
    Take $y_\sigma$ and $g$ from~\eqref{en:r-deduc-emb-ks21e:real}.
    For $k$-octahedra $P,Q$ define
    %\linebreak
    $A_{P,Q}:=\sum\limits_{(\sigma,\tau)\in P \times Q}y_\sigma \cap_M y_\tau$.
    Take a basis in $H_k(M;\Z_2)$ in which the matrix of $\cap_M$ is $\Omega_M$.
    Let $Y$ be the $\beta_k(M) \times \binom{n}{2}^{k+1}$-matrix whose columns are coordinates of $y_\sigma$ in this basis.
    Then $A = Y^T\Omega_MY$.
    It remains to prove that $A$ is an $(n,k)$-matrix.
     
    The additivity and the symmetry are obvious.
    
    The independence holds since for vertex-disjoint $k$-octahedra $P,Q$ we have
    \[
        A_{P,Q} = \sum\limits_{(\sigma,\tau)\in P\times Q}y_\sigma\cap_M y_\tau = |gP\cap gQ|_2 = 0,
    \]
    where the last equality holds by the Parity Lemma (asserting that every two general position $k$-cycles in $\R^{2k}$ intersect at an even number of points, cf. \cite[\S1.3, \S4.8 `Algebraic intersection number', and Lemma~5.3.4]{Sk}). 
    
    The non-triviality holds since
    \begin{multline*}
        SA=
        \sum_{\{P,Q\}\in G_k}
            A_{P,Q} =
        \sum_{\{P,Q\}\in G_k}
            \sum_{\{\alpha,\beta\} \in T\{P,Q\}}
                y_\alpha \cap_M y_\beta = \\
        \sum_{\{\alpha, \beta\}\in H}
            y_\alpha \cap_M y_\beta =                  
        \sum_{\{\alpha, \beta\}\in H}
            |g\alpha \cap g\beta|_2 =                   1,
    \end{multline*}
    where the third equality holds by Lemma~\ref{l:comb} (Combinatorial), and the last equality is the result of van Kampen \cite[Satz~5]{vK32}, see Lemma~\ref{lem:vanKampen} (van Kampen).
    
    \item\label{en:r-deduc-emb-ks21e:ded-lem} Lemma~\ref{lem:non-triv} (for $\Z_2$-embeddings) is analogously deduced from~(\ref{en:r-deduc-emb-ks21e:real}) with the following addendum\footnote{The addendum is essentially obtained in \cite[\S2.3, Proof of Lemma 2.3.1.a]{Sk24}. We use the notation and the equalities (2), (3) from that proof. 
We obtain 
$$\sum\limits_{(\sigma,\tau)\in P \times Q} y_\sigma\cap_M y_\tau  = \sum\limits_{(\sigma,\tau)\in P \times Q} \left(|f'\sigma\cap f'\tau|_2+|h'\sigma\cap h'\tau|_2\right) = |f'P\cap f'Q|_2+A(h')_{P,Q} = A(h')_{P,Q}.$$ 
Here the last equality follows by the Parity Lemma.}: \emph{if $P$ and $Q$ are $k$-cycles in $K$ and $f\colon K\to M$ is an almost embedding, then $A(f)_{P,Q} = \sum\limits_{(\sigma,\tau)\in P \times Q}y_\sigma\cap_M y_\tau$.}
    %= fP\cap_M fQ
\end{remarkenumi}
\end{remark}

\begin{remark}[On alternative definition of non-triviality]
    Under the assumptions that $A$ is symmetric, independent and additive, the following is an equivalent definition of the non-triviality (the equvalence is clear from the proof of Lemma~\ref{l:pre-tour-like}).
    The matrix $A$ is said to be \emph{non-trivial} if for any complex $K \subset [n]^{*k+1}$ isomorphic to $[3]^{*k+1}$ and any $k$-face $\alpha\subset K$ the sum $S_{\alpha, K}A = 1$, where $S_{\alpha, K}A$ is the sum of $A_{P,Q}$ over all unordered pairs $\{P,Q\}$ of $k$-octahedra in $[3]^{*k+1}$ such that $P\cap Q=\alpha$.
    
    The analogue of Lemma~\ref{lem:non-triv} (Embedding) for the new definition is correct because every proof of non-triviality presented in the paper works for any subcomplex $K\subset [n]^{*k+1}$ isomorphic to $[3]^{*k+1}$ and any $k$-face $\alpha\subset K$.
\end{remark}

Below we give an alternative proof of non-triviality from Lemma~\ref{lem:non-triv}, which we did not succeed to generalize to $\Z_2$-embeddings.

\begin{proposition}[Intersection formula]
\label{p:inter}
    For any embedding $f\colon [3]^{*k+1}\to M$ into a $2k$-manifold $M$ there is a general position map $g\colon [3]^{*k+1}\to\R^{2k}$ such that for any $k$-octahedra $P,Q$ whose intersection is $1^{*k+1}$
    \[
        fP \cap_M fQ =
        \sum_{\{\alpha,\beta\} \in T\{P,Q\}}
                \babs{g(\alpha-\beta) \cap g(\beta-\alpha)}_2.
    \]
\end{proposition}

\setcounter{mcnt}{0}
\renewcommand{\themcnt}{\text{(\ref{p:inter}.\alph{mcnt})}}

\begin{proof}
    Denote $I := [\,0,1\,] \subset \R$.
    
    By ambient isotopy of $M$ we may assume that the image of $f$ is in the interior of $M$.
    Since $f$ is an embedding and since $1^{*k+1}\cong I^k$ is collapsible, by \cite[Corollary~3.27]{RS72} there is an embedding
    %not Z2
    $i\colon I^{2k}\to M$ in general position to $f$, and such that
    \[   
        iI^{2k}\supset f(1^{*k+1})
        \quad\text{and}\quad iI^{2k}\cap f\bigl(\{2,3\}^{*k+1}\bigr)=\emptyset.
    \]
    
    Take any general position map 
    \[
        g\colon [3]^{*k+1}\to\R^{2k}\quad\text{such that}\quad f^{-1}(iI^{2k}) = g^{-1}(I^{2k})=:Z\quad\text{and}\quad f|_Z=ig|_Z.
    \]
    Here the property $f|_Z=ig|_Z$ means that  $f(x)=ig(x)$ when $f(x)\in iI^{2k}$ (or, equivalently, when $g(x)\in I^{2k}$).
    
    Take any general position map $g' \colon [3]^{*k+1} \to \R^{2k}$ such that
    \[
        g|_{g^{-1}(\R^{2k} - I^{2k})} = g'|_{g'^{-1}(\R^{2k} - I^{2k})}
    \]
    (which means that $g^{-1}(\R^{2k} - I^{2k}) = g'^{-1}(\R^{2k} - I^{2k})$, and the restrictions of $g$ and $g'$ to $g^{-1}(\R^{2k} - I^{2k})$ coincide),
    and
    {
    \renewcommand{\theenumi}{(CGP)}
    \renewcommand{\labelenumi}{(CGP)}
    \begin{enumerate}[leftmargin=3cm]
        \item
        \label{en:gp}
        \emph{$g$ and $g'$ are close, and $(g \sqcup g')|_{g^{-1}(I^{2k})}$ is a general position map.}
    \end{enumerate}
    }
    
    Then for any $k$-octahedra $P,Q$ whose intersection is $1^{*k+1}$
    
    \begin{multline*}
        fP \cap_M fQ                                                              \stackrel{\juststep}{=}
        \babs{\bigl(igP \cap iI^{2k}\bigr) \cap \bigl(ig'Q \cap iI^{2k}\bigr)}_2  \stackrel{\juststep}{=} \\
        % \stackrel{\themcnt}{=}
        \babs{\bigl(gP \cap I^{2k}\bigr) \cap \bigl(g'Q \cap I^{2k}\bigr)}_2      \stackrel{\juststep}{=}
        \babs{\bigl(gP - I^{2k}\bigr) \cap \bigl(g'Q - I^{2k}\bigr)}_2            \stackrel{\juststep}{=} \\
        % \stackrel{\themcnt}{=}
        \babs{\bigl(gP - I^{2k}\bigr) \cap \bigl(gQ - I^{2k}\bigr)}_2             \stackrel{\juststep}{=}
        \sum_{\{\alpha,\beta\} \in T\{P,Q\}}
            \babs{g(\alpha-\beta) \cap g(\beta-\alpha)}_2.
    \end{multline*}
\setcounter{mcnt}{0}    
    Let us prove the equalities.
    \begin{itemize}[leftmargin=6mm, nosep]
        \item Equality \cnt is proved as follows.
        Since $P\cap Q=1^{*k+1}$, we have $fP\cap fQ \subset iI^{2k}$.
        % not Z2
        This, $f|_Z = ig|_Z$, and~\ref{en:gp}, imply \themcnt.
        
        \item Equality \cnt holds since $i$ is an embedding.
        
        \item
        Equality \cnt holds by the Parity Lemma (see the first usage in Remark~\ref{r:deduc-emb-ks21e}.\ref{en:r-deduc-emb-ks21e:ded-t}) and since $g|_P \sqcup g'|_Q$ is a general position map.
        On $g^{-1}(I^{2k})$ this map is in general position by~\ref{en:gp}.
        On $g^{-1}(\R^{2k} - I^{2k})$ this map is in general position since
        \begin{enumerate}
            \item[$\circ$] $g|_{g^{-1}(\R^{2k} - I^{2k})} = g'|_{g'^{-1}(\R^{2k} - I^{2k})}$,
            
            \item[$\circ$] $g$ is a general position map,
            
            \item[$\circ$] $P \cap Q = 1^{*k+1}$,

            \item[$\circ$] $iI^{2k}\supset f(1^{*k+1})$ and $iI^{2k}\cap f\bigl(\{2,3\}^{*k+1}\bigr)=\emptyset$, and

            \item[$\circ$] $f|_Z=ig|_Z$.
        \end{enumerate}
        
        \item Equality \cnt holds since $g|_{g^{-1}(\R^{2k} - I^{2k})} = g'|_{g'^{-1}(\R^{2k} - I^{2k})}$.
        
        \item Equality \cnt is proved as follows.
        For $\{\alpha,\beta\} \in T\{P,Q\}$ we have $\alpha \cap \beta \subset 1^{*k+1}$.
        This, $iI^{2k}\supset f(1^{*k+1})$, $iI^{2k}\cap f\bigl(\{2,3\}^{*k+1}\bigr)=\emptyset$ and $f|_Z=ig|_Z$ imply
        \[
            \bigl(gP-I^{2k}\bigr) \cap \bigl(gQ-I^{2k}\bigr) =
            \bigsqcup\limits_{\{\alpha,\beta\} \in T\{P,Q\}} g(\alpha-\beta) \cap g(\beta-\alpha).
        \]
    \end{itemize}
\end{proof}

\setcounter{mcnt}{0}
\renewcommand{\themcnt}{\text{(\ref{lem:non-triv}.\alph{mcnt})}}

\begin{proof}[Proof of the non-triviality in Lemma~\ref{lem:non-triv} (Embedding)]
    Take a map $g\colon[3]^{*k+1} \to \R^{2k}$ from Proposition~\ref{p:inter}.
    Then
    \begin{multline*}
        SA(f)                                               \stackrel{\juststep}{=}
        \sum_{\{P,Q\}\in G_k}
            fP \cap_M fQ                               \stackrel{\juststep}{=}
        \sum_{\{P,Q\}\in G_k}
            \sum_{\{\alpha,\beta\} \in T\{P,Q\}}
                \babs{g(\alpha-\beta) \cap g(\beta-\alpha)}_2          \stackrel{\juststep}{=} \\
                % \stackrel{\themcnt}{=}
        \sum_{\{\alpha, \beta\}\in H}
            \babs{g(\alpha-\beta) \cap g(\beta-\alpha)}_2              \stackrel{\juststep}{=}
        \sum_{\{\alpha, \beta\}\in H}
            \abs{g\alpha \cap g\beta}_2              \stackrel{\juststep}{=} 1.
    \end{multline*}

\setcounter{mcnt}{0}
    Here
    \begin{itemize}[leftmargin=6mm, nosep]

        % \item $G_k$ is the set of all unordered pairs of $k$-octahedra from $[3]^{*k+1}$ whose intersection is $1^{*k+1}$;
        
        \item equality \cnt is the definition of $SA(f)$;
        
        \item equality \cnt holds by Proposition~\ref{p:inter};

        % \item $H$ is the set of all unordered pairs of vertex-disjoint $k$-faces of $[3]^{*k+1}$;
        
        \item equality \cnt holds by Lemma~\ref{l:comb} (Combinatorial);

        \item equality \cnt holds since $\alpha,\beta$ are vertex-disjoint;

        \item equality \cnt is the result of van Kampen \cite[Satz~5]{vK32} (see Lemma~\ref{lem:vanKampen}).
    \end{itemize}
\end{proof}

\begin{remark}[Relation to intrinsic linking results]\label{r:ramsey}
    (a) Non-embeddability of $\Delta_{2k+2}^k$ into $\R^{2k}$ is related to a congruence analogous to $SA(f)=1$ \cite[Proposition~16.ii]{PT19}, \cite[\S1, non-triviality]{KS21}, and to an intrinsic linking result for $(k-1)$-complexes in $\R^{2k-1}$ \cite[Theorem~1.6.odd]{KS21} (see also \cite[proof of Lemma~1.5]{KS21}).
    Non-embeddability of $K_5^k$ into $\R^{2k}$ is related to an intrinsic linking result for $(k-1)$-complexes in $\R^{2k-1}$ \cite{Sk03}. 
    Analogously, the proof of (well-known) non-embeddability of $[3]^{*k+1}$ in $\R^{2k}$ given by Lemma~\ref{lem:vanKampen} (van Kampen) is related to the congruence $SA(f)=1$, and to certain intrinsic linking result for a $(k-1)$-complex in $\R^{2k-1}$.
    For $k=2$ this is a result on a graph in $\R^3$; see (b,c,d) below.
                 
    (b) Let $G$ be the graph with the vertex set $\Z_4\times\Z_3$ and edges joining the following pairs of vertices: 
    $$(i,j)(i+1,j)\quad\text{and, \quad for }i=0,2, \quad(i+1,0)(i+1,1),\quad(i,1)(i,2),\quad (i+1,2)(i,0).$$
    Let $S_j$ be the induced subgraph on vertices $(i,j)$, $i\in\Z_4$. 
    Clearly, $S_j$ is a cycle of length~$4$.
    
    An \emph{octahedral cycle of length $6$} in $G$ is any of the following $8$ cycles for $i=0,2$ and  $\varepsilon_1,\varepsilon_2=\pm1$: 
    $$(i,0)\ (i+\varepsilon_1,0)\ (i+\varepsilon_1,1)\ (i+\varepsilon_1+\varepsilon_2,1)\ (i+\varepsilon_1+\varepsilon_2,2)\ (i+1,2).$$ 
    The two cycles for fixed $\varepsilon_1,\varepsilon_2$ and different $i$ are called {\it involutive}. 
    Clearly, involutive cycles are disjoint. 
    
    Take the (octahedral) cycles
    $$T_1 := (0,0)(1,2)(2,2)(3,2)(2,0)(3,0),$$  
    $$T_2 := (3,1)(2,1)(1,1)(1,0)(2,0)(3,0),$$ 
    $$T_3 := (2,2)(3,2)(0,2)(0,1)(3,1)(2,1).$$
    Clearly, $T_j\cap S_j=\emptyset$. 
     
    (c) \emph{Assertion.} \cite{Ni} Suppose that $f\colon G\to\R^3$ is an embedding. Then the sum of the three pairwise linking numbers of $f(S_j)$ and $f(T_j)$, $j=1,2,3$, and the four linking numbers of involutive octahedral cycles, is odd.  
    
    This follows from \cite{Sa81}. 
    Indeed, in the graph $G$, by contracting three edges $(0,0)(1,2)$, $(0,2)(0,1)$ and $(1,1)(1,0)$, we obtain a proper minor of $G$ isomorphic to the graph $G_9$ in the Petersen family \cite{Sa81}. 
    (Further, certain $\Delta Y$-move yields the Petersen graph $P_{10}$.)
    All disjoint cycle pairs of $G_9$ consist of six $(4,5)$-cycle pairs and exactly one $(3,6)$-cycle pair. 
    It is known that for every embedding  $G_9\to\R^3$ the sum of the linking numbers over all of the constituent $2$-component links is odd \cite{Sa81}.
    Three of the six $(4,5)$-cycle pairs and exactly one $(3,6)$-cycle pair of $G_9$ correspond to the four involutive octahedral cycle pairs of $G$. 
    The three remaining $(4,5)$-cycle pairs of $G_9$ correspond to pairs $S_j,T_j$ of $G$.  
    Thus the assertion follows. 
    
    (d) The following holds both by (c) (see (e)) and by our proof of non-triviality in Lemma~\ref{lem:non-triv} (see (f)). 
    Suppose that $f\colon G\to\R^3$ is an embedding such that the images $f(S_1),f(S_2),f(S_3)$ lie in pairwise disjoint $3$-balls, and the image of any edge outside $S_1\sqcup S_2\sqcup S_3$ is disjoint with one of the three balls.
    Then the sum of the four linking coefficients of involutive octahedral cycles is odd. 
    
    (e) \cite{Ni}  
    Denote by $B_j\supset f(S_j)$ the mutually disjoint 3-balls.  
    Then for $f(S_1\cup T_1)$, two edges $f((3,2)(2,2))$ and $f((0,0)(1,2))$ miss $B_1$, and the other edges of
    $f(T_1)$ also miss $B_1$ because they are contained in $B_2$ and $B_3$. 
    Thus the linking number of $f(S_1)$ and $f(T_1)$ is zero. 
    Analogously the linking number of $f(S_j)$ and $f(T_j)$ is zero for each $j=2,3$.
    This implies (d).  
    
    (f) Take the following representation of $G$. 
    Vertices of $G$ correspond to edges $a*b*c$ of $[3]^{*3}$, where among $a,b,c$ there is exactly one `3', and there is exactly one $\emptyset$. 
    We denote such a vertex by $abc$.  
    %(?These correspond to triples $(x,y,z)\in\t{[3]}\times[2]$, where $x$ is the place of the `3', $y\ne x$ is the place of $\emptyset$, and $z$ is the element at the remaining place. 
    %So for every $x$ there are four such `vertices' $(x,y,z)$.) 
    Edges of $G$ correspond to faces $3*b*c$ or $3*3*c$ of $[3]^{*3}$, and three times as many symmetric faces (i.e. faces obtained by changing the place of `3's).  
    So edges are $3b\emptyset,3\emptyset c$ ({\it short} edges),  $3\emptyset c,\emptyset3c$ ({\it long} edges), and symmetric edges.
    %(?So edges are $(1,2,c)(1,3,b)$, $(1,2,c)(2,1,c)$, and symmetric edges.) 
    %There are 12 short edges and 6 long edges.  
    
    There are three cycles $S_1,S_2,S_3$ of length four obtained by changing the place of `3' from the cycle $32\emptyset,3\emptyset2,31\emptyset,3\emptyset1$ of short edges.  
    For each $c\in[2]$ there are three edges obtained by changing the place of $c$ in the long edge $3\emptyset c,\emptyset3c$.  
    
    An \emph{octahedral cycle of length $6$} is the cycle 
    $3b\emptyset,3\emptyset c,\emptyset3c,a3\emptyset,a\emptyset3,\emptyset b3,$ where $a,b,c\in[2]$.     
    This is $*(a*b*c)\cap g^{-1}(\partial [\,0,1\,]^{2k})$.  
    Take the involution on $G$ defined by interchanging $1$ and $2$. Then the eight octahedral cycles split into pairs of involutive cycles, and involutive octahedral cycles are disjoint. 
    
    Now (d) follows by transforming the left-hand side of (\ref{p:inter}.e) analogously to \cite[(2) in the proof of Lemma 1.5]{KS21}.   
\end{remark}

}

%uncomment for arxiv
%\UseRawInputEncoding

\newcommand{\aate}{\bibitem[AA38]{AA38} \emph{A. Adrian Albert}. Symmetric and alternate matrices in an arbitrary field, I. Trans. Amer. Math. Soc., (1938) 43(3):386--436.}

\newcommand{\abc}{\bibitem[ABC+]{ABC+} * \emph{M. Atiyah, A. Borel, G. J. Chaitin, D. Friedan, J. Glimm, J. J. Gray, M. W. Hirsch, S. MacLane, B. B. Mandelbrot, D. Ruelle, A. Schwarz, K. Uhlenbeck, R. Thom, E. Witten, C.  Zeeman.} Responses to ``Theoretical Mathematics: Toward a cultural synthesis of mathematics and theoretical physics'', by A. Jaffe and F. Quinn. Bull. Am. Math. Soc. 30 (1994) 178--207. arXiv:math/9404229.}

\newcommand{\abgmns}{\bibitem[ABM+]{ABM+} * \emph{E. Alkin, E. Bordacheva, A. Miroshnikov, O. Nikitenko, A. Skopenkov,} Invariants of almost embeddings of graphs in the plane: results and problems, arXiv:2408.06392.}

\newcommand{\abms}{\bibitem[ABM+]{ABM+} * \emph{Э. Алкин, Е. Бордачева, А. Мирошников, А. Скопенков,} Инварианты почти вложений графов в плоскость, arXiv:2410.09860.}

\newcommand{\adnt}{\bibitem[Ad93]{Ad93} * \emph{M. Adachi}. Embeddings and Immersions. Amer. Math.
Soc., 1993. (Transl. of Math. Monographs; V.~124).}

\newcommand{\adoe}{\bibitem[Ad18]{Ad18} {\it K. Adiprasito,} Combinatorial Lefschetz theorems beyond positivity, arXiv:1812.10454v4.}

\newcommand{\adnsv}{\bibitem[ADN+]{ADN+} * \emph{E. Alkin, S. Dzhenzher, O. Nikitenko, A. Skopenkov, A. Voropaev.} Cycles in graphs and in hypergraphs: results and problems, arXiv:2308.05175.}

\newcommand{\agles}{\bibitem[AGL]{AGL86} Mathematical Economics,  ed. by A. Ambrosetti, F. Gori, R. Lucchetti,
Lect. Notes Math. 1330, Springer, 1986.}

%Lectures given at the 2nd 1986 Session of the Centro ...

\newcommand{\akzz}{\bibitem[Ak00]{Ak00} * \emph{П. М. Ахметьев.} Вложения компактов, стабильные
гомотопические группы сфер и теория особенностей, Успехи Мат. Наук.  2000. 55:3. C.~3-62.}

\newcommand{\akoe}{\bibitem[AK19]{AK19} \emph{S. Avvakumov, R. Karasev.} Envy-free division using mapping degree,
Mathematika, 67:1 (2020), 36--53. arXiv:1907.11183.}

\newcommand{\akto}{\bibitem[AK21]{AK21} \emph{G. Arone and V. Krushkal.}
Embedding obstructions in $\R^d$ from the Goodwillie-Weiss calculus and Whitney disks. arXiv:2101.10995. }

\newcommand{\akm}{\bibitem[AKM]{AKM} \emph{M. Abrahamsen, L. Kleist and T. Miltzow.}
Geometric Embeddability of Complexes is $\exists\mathbb R$-complete, arXiv:2108.02585.}

\newcommand{\aksoe}{\bibitem[AKS]{AKS} \emph{S. Avvakumov, R. Karasev and A. Skopenkov.} Stronger counterexamples to the topological Tverberg conjecture, Combinatorica, 43 (2023), 717--727. arXiv:1908.08731.}

%https://link.springer.com/article/10.1007/s00493-023-00031-w

\newcommand{\akuoe}{\bibitem[AKu19]{AKu19} \emph{S. Avvakumov, S. Kudrya.}
Vanishing of all equivariant obstructions and the mapping degree.
Discr. Comp. Geom., 66:3 (2021) 1202--1216. arXiv:1910.12628.}

\newcommand{\alto}{\bibitem[Al22]{Al22} \emph{E. Alkin,}
Hardness of almost embedding simplicial complexes in $\R^d$, II. arXiv:2206.13486}

\newcommand{\amtf}{\bibitem[AM25]{AM25} \emph{E. Alkin, A. Miroshnikov,} On winding numbers of almost embeddings of $K_4$ in the plane, 	arXiv:2501.15642.}

\newcommand{\amsw}{\bibitem[AMS+]{AMSW} \emph{S. Avvakumov, I. Mabillard, A. Skopenkov and U. Wagner.}
Eliminating Higher-Multiplicity Intersections, III. Codimension 2, Israel J. Math. 245 (2021) 501--534.  arxiv:1511.03501.}

%\url{http://arxiv.org/abs/1511.03501}
%DOI: https://doi.org/10.1007/s11856-021-2216-z.

\newcommand{\anzt}{\bibitem[An03]{An03} * \emph{Д. В. Аносов.} Отображения окружности, векторные поля и их применения. М: МЦНМО, 2003.}

\newcommand{\arnf}{\bibitem[Ar95]{Ar95} * \emph{V. I. Arnold,}  Topological invariants of plane curves and caustics, University Lecture Series, Vol. 5, Amer. Math. Soc., Providence, RI, 1995.}

\newcommand{\arszo}{\bibitem[ARS01]{ARS01} \emph{P. Akhmetiev, D. Repov\v s and A. Skopenkov},
Embedding products of low-dimensional manifolds in $\R^m$, Topol. Appl. 113 (2001), 7--12.}

\newcommand{\arszt}{\bibitem[ARS02]{ARS02} \emph{P. Akhmetiev, D. Repovs and A. Skopenkov.} Obstructions to approximating maps of $n$-manifolds into $R^{2n}$ by embeddings, Topol. Appl., 123 (2002), 3--14.}

\newcommand{\asoed}{\bibitem[As]{As} \emph{A. Asanau,} \lowercase{A SIMPLE PROOF THAT CONNECTED SUM OF ORDERED
ORIENTED LINKS IS NOT WELL-DEFINED,} Math. Notes, to appear.}

\newcommand{\asoe}{\bibitem[As]{As} \emph{A. Asanau,} On the \lowercase{TRIPLE SELF-INTERSECTION NUMBER FOR GRAPHS IN THE PLANE,} unpublished, 2018.}

\newcommand{\avos}{\bibitem[Av14]{Av14} \emph{S. Avvakumov,} The classification of certain linked 3-manifolds in 6-space, Moscow Math. J., 16:1 (2016), 1--25. arXiv:1408.3918.}

\newcommand{\avose}{\bibitem[Av17]{Av17} \emph{S. Avvakumov,} The classification of linked 3-manifolds in 6-space, Algebraic \& Geometric Topology, 22:6 (2022) 2587--2630. arXiv:1704.06501.}

\newcommand{\bant}{\bibitem[Ba93]{Ba93} * \emph{T. Bartsch.} Topological methods for variational problems with
symmetries, Lecture Notes in Mathematics, 1560, Springer-Verlag, Berlin, 1993.}

\newcommand{\batt}{\bibitem[Ba23]{Ba23} * \emph{I. Barany.} Tverberg's theorem, a new proof. arXiv:2308.10105.}

\newcommand{\bbsn}{\bibitem[BB79]{BB} \emph{E.~G. Bajm{{\'o}}czy and I.~B{{\'a}}r{{\'a}}ny,}
\newblock On a common generalization of {B}orsuk's and {R}adon's theorem,
\newblock Acta Math.\ Acad.\ Sci.\ Hungar.\ 34:3 (1979), 347-350.}

\newcommand{\bbzos}{\bibitem[BBZ]{BBZ} * \emph{I.~B{{\'a}}r{{\'a}}ny, P.~V.~M. Blagojevi{{\'c}} and G.~M. Ziegler.} Tverberg's Theorem at 50: Extensions and Counterexamples, Notices of the Amer. Math. Soc., 63:7 (2016), 732--739.}

%\url{http://www.ams.org/journals/notices/201607}}

\newcommand{\bcm}{\bibitem[BCM]{BCM} * 13th Hilbert Problem on superpositions of functions, presented by A. Belov, A. Chilikov, I. Mitrofanov, S. Shaposhnikov and A. Skopenkov,
%\linebreak
\url{http://www.turgor.ru/lktg/2016/5/index.htm}.}

\newcommand{\beet}{\bibitem[BE82]{BE82} * \emph{V.G. Boltyansky and V.A. Efremovich.} Intuitive Combinatorial Topology. Springer.}

\newcommand{\beetr}{\bibitem[BE82]{BE82} * \emph{В. Г. Болтянский и В. А. Ефремович.} Наглядная топология. М.:  Наука, 1982.}

%\url{http://ilib.mccme.ru/djvu/geometry/boltiansky-nagl-topo.htm}.}

\newcommand{\bfzn}{\bibitem[BF09]{BF09} \emph{K. Barnett, M. Farber}. Topology of Configuration Space of Two Particles on a Graph, I.  Algebr. Geom. Topol. 9 (2009) 593--624.	arXiv:0903.2180.}

\newcommand{\bfzof}{\bibitem[BFZ14]{BFZ14} \emph{P. V. M. Blagojevi{\'c}, F. Frick, and G. M. Ziegler,}
Tverberg plus constraints, Bull. Lond. Math. Soc. 46:5 (2014), 953-967, arXiv:1401.0690.}

%\url{http://arxiv.org/abs/1401.0690}

\newcommand{\bfzos}{\bibitem[BFZ]{BFZ} \emph{P. V. M. Blagojevi{\'c}, F. Frick and G. M. Ziegler,}
Barycenters of Polytope Skeleta and Counterexamples to the Topological Tverberg Conjecture, via Constraints,
J. Eur. Math. Soc., 21:7 (2019) 2107-2116. arXiv:1510.07984.}

%MR3959859
%\linebreak\url{http://arxiv.org/abs/1510.07984}

\newcommand{\bgso}{\bibitem[BG71]{BG71} J.C. Becker and H. H. Glover, {\it Note on the Embedding of Manifolds in Euclidean Space,} Proc. of the Amer. Math. Soc., 27:2 (1971) 405-410.}

%doi:10.2307/2036329

\newcommand{\bgos}{\bibitem[BG16]{BG16} \emph{A. Bj\"orner and A. Goodarzi}, On Codimension one Embedding of Simplicial Complexes, in book: A Journey Through Discrete Mathematics, arXiv:1605.01240.}

\newcommand{\biet}{\bibitem[Bi83]{Bi83} * \emph{R. H. Bing.} The Geometric Topology of 3-Manifolds. Providence, R.~I. 1983. (Amer. Math. Soc. Colloq. Publ., 40).}

\newcommand{\bitz}{\bibitem[Bi20]{Bi20} * \emph{A. Bikeev.} Realizability of discs with ribbons on the M\"obius strip. Mat. Prosveschenie, 28 (2021), 150-158;
erratum to appear. arXiv:2010.15833.}

\newcommand{\bitzr}{\bibitem[Bi20]{Bi20} * \emph{А. Бикеев.} Реализуемость дисков с ленточками на ленте Мебиуса.
Мат. просвещение. Сер. 3. 28 (2021), 150--158.}

\newcommand{\bito}{\bibitem[Bi21]{Bi21} {\it A. I. Bikeev,}
Criteria for integer and modulo 2 embeddability of graphs to surfaces, arXiv:2012.12070v2.}

%Towards a short proof of the Fulek--Kyn\v cl criterion for modulo 2 embeddability of graphs to surfaces,

\newcommand{\bagos}{\bibitem[BG17]{BG17} \emph{S. Basu and S. Ghosh.} Equivariant maps related to the topological Tverberg conjecture, Homology, Homotopy and Applications 19:1 (2017) 155--170.}

\newcommand{\bkkmzof}{\bibitem[BKK]{BKK} \emph{M. Bestvina, M. Kapovich and B. Kleiner,}
Van Kampen's embedding obstruction for discrete groups, Invent. Math. 150 (2002) 219--235. arXiv:math/0010141.}

\newcommand{\bl}{\bibitem[BL]{BL} \url{https://en.wikipedia.org/wiki/Brunnian_link}}

\newcommand{\blf}{\bibitem[BL4]{BL4} Students form a 4-component Brunnian link,  \url{http://www.mccme.ru/circles/oim/foto2014/brunn4.png} (5Mb)}

\newcommand{\bmzf}{\bibitem[BM04]{BM04} \emph{Boyer, J. M. and Myrvold, W. J.} On the cutting edge: simplified $O(n)$ planarity by edge addition,  Journal of Graph Algorithms and Applications, 8:3 (2004) 241--273.}

\newcommand{\bm}{\bibitem[BM15]{BM15} \emph{I. Bogdanov and A. Matushkin.} Algebraic proofs of linear versions of the Conway--Gordon--Sachs theorem and the van Kampen--Flores theorem, arXiv:1508.03185.}

%\emph{И. Богданов и А. Матушкин,} Алгебраическое доказательство линейных теорем Конвея-Гордона-Закса
%и ван Кампена-Флореса.
%\linebreak \url{http://arxiv.org/abs/1508.03185}

\newcommand{\bmzzn}{\bibitem[BMZ09]{BMZ09} \emph{P. V. M. Blagojevi{\'c}, B. Matschke, G. M. Ziegler,}
Optimal bounds for a colorful Tverberg-Vre\'cica type problem, Advances in Math., 226 (2011), 5198-5215, arXiv:0911.2692.}

\newcommand{\bmzof}{\bibitem[BMZ15]{BMZ15} \emph{P. V. M. Blagojevi{\'c}, B. Matschke, G. M. Ziegler,}
Optimal bounds for the colored Tverberg problem, J. Eur. Math. Soc.,  17:4 (2015) 739--754,
arXiv:0910.4987.}

\newcommand{\bpns}{\bibitem[BP97]{BP97} * \emph{R. Benedetti and C. Petronio.} Branched standard spines of 3-manifolds, Lecture Notes in Math. 1653, Springer-Verlag, Berlin-Heidelberg-New York, 1997.}

\newcommand{\brst}{\bibitem[Br72]{Br72} \emph{J. L. Bryant.} Approximating embeddings of polyhedra in codimension 3, Trans. Amer. Math. Soc., 170 (1972) 85--95.}

\newcommand{\brts}{\bibitem[Br68]{Br68} \emph{P. Bruegel,} 1568,
%\linebreak
\url{https://en.wikipedia.org/wiki/The_Magpie_on_the_Gallows}.}

%The Magpie on the Gallows,

\newcommand{\bren}{\bibitem[Br82]{brown1982} * \emph{K.~S. Brown.} \newblock Cohomology of Groups. \newblock Springer-Verlag New York, 1982.}

%\bibitem[Bre]{Bre}G. Bredon \newblock {\em Cohomology Theory}

\newcommand{\bssos}{\bibitem[BS17]{BS17} * \emph{I.~B\'{a}r\'{a}ny and P. Sober\'{o}n,} Tverberg's theorem is 50 years old: a survey, Bull. Amer. Math. Soc. (N.S.) 55:4 (2018), 459--492. arXiv:1712.06119.}

\newcommand{\bsto}{\bibitem[BS21]{BS21} * \emph{A. Buchaev and A. Skopenkov,} Simple proofs of estimations of Ramsey numbers and of discrepancy, Mat. Prosveschenie, to appear, arXiv:2107.13831.}

\newcommand{\brsnn}{\bibitem[BRS99]{BRS99} \emph{D. Repov\v s, N. Brodsky and A. B. Skopenkov.}
A classification of 3-thickenings of 2-polyhedra, Topol. Appl. 1999. 94. P.~307-314.}

\newcommand{\bsseo}{\bibitem[BSS]{BSS} \emph{I.~B\'{a}r\'{a}ny, S.~B. Shlosman, and A.~Sz{\H{u}}cs,}
\newblock On a topological generalization of a theorem of {T}verberg,
\newblock J.\ London Math.\ Soc.\ (II. Ser.) 23 (1981), 158--164.}

\newcommand{\btzs}{\bibitem[BT07]{BT07} \emph{A. Bj\"orner, M. Tancer}, Combinatorial Alexander Duality --- a Short and Elementary Proof, Discr. and Comp. Geom., 42 (2009) 586. arXiv:0710.1172.}

\newcommand{\buse}{\bibitem[Bu68]{Bu68} \emph{A. R. Butz,} Space filling curves and mathematical programming, Information and Control, 12:4 (1968) 314--330.}

%\url{http://www.sciencedirect.com/science/article/pii/S0019995868903677}

\newcommand{\bz}{\bibitem[BZ16]{BZ16} * \emph{P. V. M. Blagojevi\'c and G. M. Ziegler,} Beyond the Borsuk-Ulam theorem: The topological Tverberg story, in: A Journey Through Discrete Mathematics, Eds. M. Loebl,
J. Ne\v set\v ril, R. Thomas, Springer, 2017, 273--341. arXiv:1605.07321v3.}

%\url{http://arxiv.org/abs/1605.07321}

\newcommand{\cano}{\bibitem[Ca91]{Ca91} * \emph{D. de Caen}, The ranks of tournament matrices, Amer. Math. Monthly, 98:9 (1991) 829--831.}

\newcommand{\ca}{\bibitem[Ca]{Ca} \emph{J. Carmesin.} Embedding simply connected 2-complexes in 3-space, I-V, arXiv:1709.04642, arXiv:1709.04643, arXiv:1709.04645, arXiv:1709.04652, arXiv:1709.04659.}

\newcommand{\cfsz}{\bibitem[CF60]{CF60} \emph{P. E. Conner and E. E. Floyd}, Fixed points free involutions and equivariant maps, Bull. Amer. Math. Soc., 66 (1960) 416--441.}

\newcommand{\cfs}{\bibitem[CFS]{CFS} \emph{D. Crowley, S.C. Ferry, M. Skopenkov,} The rational classification of links of codimension $>2$, Forum Math. 26 (2014), 239--269. arXiv:1106.1455.}

\newcommand{\cget}{\bibitem[CG83]{CG83} \emph{J. H. Conway and C. M. A. Gordon},
Knots and links in spatial graphs, J. Graph Theory  7 (1983), 445--453.}

\newcommand{\cten}{\bibitem[Ch]{Ch} \emph{Chuang Tzu,} translated by H. A. Giles, Bernard Quaritch, London, 1889.}

\newcommand{\ctruku}{\bibitem[Ch]{Ch} \emph{Chuang Tzu,} translated to Russian by S. Kuchera, in: Ancient Chinese Philosophy, v. I, Mysl, Moscow, 1972.}

%Древнекитайская философия. Собрание текстов в двух томах. Т. I. М., <<Мысль>>, 1972.

\newcommand{\chnn}{\bibitem[Ch99]{Ch99} * \emph{А. В. Чернавский,} Теорема Жордана.  Мат. Просвещение, 3 (1999), 142--157.}

\newcommand{\hcon}{\bibitem[HC19]{HC19} * \emph{C. Herbert Clemens.} Two-Dimensional Geometries. A Problem-Solving Approach, Amer. Math. Soc., 2019.}

\newcommand{\ckmoo}{\bibitem[CKMS]{CKMS} \emph{M. \v Cadek, M. Kr\v c\'al. J. Matou\v sek, F. Sergeraert,
L. Vok\v r\'inek, U. Wagner.} Computing all maps into a sphere, J. of the ACM, 61:3 (2014). arXiv:1105.6257.}

%http://arxiv.org/abs/1105.6257.

\newcommand{\ckmvwot}{\bibitem[CKM12+]{CKM12+} \emph{M. \v Cadek, M. Kr\v c\'al. J. Matou\v sek, L. Vok\v r\'inek, U. Wagner.} Polynomial-time computation of homotopy groups and Postnikov systems in fixed dimension, SIAM J. Comput., 43:5 (2014), 1728--1780. arXiv:1211.3093.}

\newcommand{\ckmvw}{\bibitem[CKM+]{CKM+} \emph{M. \v Cadek, M. Kr\v c\'al. J. Matou\v sek, L. Vok\v r\'inek, U. Wagner.} Extendability of continuous maps is undecidable, Discr. and Comp. Geom. 51 (2014) 24--66.
arXiv:1302.2370.}

\newcommand{\ckppt}{\bibitem[CKP+]{CKP+} \emph{E. Colin de Verdi\'ere, V. Kalu\v za, P. Pat\'ak, Z. Pat\'akov\'a and M. Tancer.} A direct proof of the strong Hanani-Tutte theorem on the projective plane. Journal of Graph Algorithms and Applications, 21:5 (2017) 939--981.}

\newcommand{\cksof}{\bibitem[CKS+]{CKS+} * New ways of weaving baskets, presented by G. Chelnokov, Yu. Kudryashov, A.Skopenkov and A. Sossinsky, \url{http://www.turgor.ru/lktg/2004/lines.en/index.htm}.}

\newcommand{\ckv}{\bibitem[CKV]{CKV} \emph{M.~{\v{C}}adek, M.~Kr\v{c}\'{a}l, and L.~Vok\v{r}\'{\i}nek.}
Algorithmic solvability of the lifting-extension problem, Discr. Comp. Geom. 57 (2017), 915--965. arXiv:1307.6444.}

%DOI 10.1007/s00454-016-9855-6

\newcommand{\clr}{\bibitem[CLR]{CLR} * \emph{Т. Кормен, Ч. Лейзерсон, Р. Ривест.} Алгоритмы:
построение и анализ, МЦНМО, Москва, 1999.}

\newcommand{\clreng}{\bibitem[CLR]{CLR} * \emph{T. H. Cormen, C. E.Leiserson, R. L.Rivest, C. Stein.} Introduction to Algorithms, MIT Press, 2009.}

\newcommand{\crzfru}{\bibitem[CR]{CR} * \emph{Р. Курант, Дж. Роббинс,} Что такое математика. М.: МЦНМО, 2004.}

\newcommand{\crzfen}{\bibitem[CR]{CR} * \emph{R. Courant and H. Robbins,} What is Mathematics, Oxford Univ. Press.}

\newcommand{\crsne}{\bibitem[CRS98]{CRS98} * \emph{A. Cavicchioli, D. Repov\v s and A. B. Skopenkov.}
Open problems on graphs, arising from geometric topology, Topol. Appl. 1998. 84. P.~207-226.}

\newcommand{\crsot}{\bibitem[CRS]{CRS} \emph{M. Cencelj, D. Repov\v s and M. Skopenkov,}
Classification of knotted tori in the 2-metastable dimension, Mat. Sbornik, 203:11 (2012), 1654--1681.
arxiv:math/0811.2745.}

\newcommand{\csoo}{\bibitem[CS08]{CS08} \emph{D. Crowley and A. Skopenkov.} A classification of smooth embeddings of 4-manifolds in 7-space, II, Intern. J. Math., 22:6 (2011) 731-757, arxiv:math/0808.1795.}

\newcommand{\csos}{\bibitem[CS16]{CS16} \emph{D. Crowley and A. Skopenkov,} Embeddings of non-simply-connected 4-manifolds in 7-space. I. Classification modulo knots, Moscow Math. J., 21 (2021), 43--98. arXiv:1611.04738.}

%DOI:10.17323/1609-4514-2021-21-1-43-98

\newcommand{\csoso}{\bibitem[CS16o]{CS16o} \emph{D. Crowley and A. Skopenkov,} Embeddings of non-simply-connected 4-manifolds in 7-space. II. On the smooth classification, Proc. A of the Royal Soc. of Edinburgh 152:1 (2022), 163--181. arXiv:1612.04776.}

%https://doi.org/10.1017/prm.2020.103

\newcommand{\crsk}{\bibitem[CS]{CS} \emph{D. Crowley and A. Skopenkov,} Embeddings of non-simply-connected 4-manifolds in 7-space. III. Piecewise-linear classification. draft.}

\newcommand{\cutz}{\bibitem[Cu20]{Cu20} \emph{C. Culter,} Cantor sets are not tangent homogeneous,
Topol. Appl. 271 (2020) 1--9.}

\newcommand{\dies}{\bibitem[Di87]{Di} * \emph{T. tom Dieck,} Transformation groups, Studies in Mathematics, vol. 8, Walter de Gruyter, Berlin, 1987.}

\newcommand{\dize}{\bibitem[Di08]{Di08} * \emph{T. tom Dieck,} Algebraic topology, EMS Textbooks in Mathematics, %European Mathematical Society
EMS, Z\"urich, 2008.}

\newcommand{\dent}{\bibitem[De93]{De93}  \emph{T.K. Dey.} On counting triangulations in $d$-dimensions. Comput. Geom.  3:6 (1993) 315--325.}

\newcommand{\denf}{\bibitem[DE94]{DE94}  \emph{T.K. Dey and H. Edelsbrunner.} Counting triangle crossings and halving planes, Discrete Comput. Geom. 12 (1994), 281--289.}

\newcommand{\dgn}{\bibitem[DGN+]{DGN+} * S. Dzhenzher, T. Garaev, O. Nikitenko, A. Petukhov, A. Skopenkov, A. Voropaev, Low rank matrix completion and realization of graphs: results and problems, arXiv:2501.13935.}

\newcommand{\dgnr}{\bibitem[DGN+]{DGN+} * Минимизация ранга восполнением матриц, представляли А. Воропаев, Т. Гараев, С. Дженжер, О. Никитенко, А. Петухов и А. Скопенков, \url{https://www.mccme.ru/circles/oim/netflix_rus.pdf}.}
 
\newcommand{\dstt}{\bibitem[DS22]{DS22}  \emph{S. Dzhenzher and A. Skopenkov,} A quadratic estimation for the K\"uhnel conjecture on embeddings, arXiv:2208.04188.}

\newcommand{\botf}{\bibitem[Dz25]{Dz25} \emph{E. Dzhenzher,} Symmetric 1-cycles in the deleted product of a graph, Topol. Appl. (2025) 109277.}

\newcommand{\embo}{\bibitem[Eb]{Eb} * \url{http://www.map.mpim-bonn.mpg.de/Embeddings_of_manifolds_with_boundary:_classification}}

\newcommand{\embe}{\bibitem[Em]{Em} * \url{http://www.map.mpim-bonn.mpg.de/Embedding_(simple_definition)}}

\newcommand{\ers}{\bibitem[ERS]{ERS} * Invariants of graph drawings in the plane, presented by A. Enne, A. Ryabichev, A. Skopenkov and T. Zaitsev, \url{http://www.turgor.ru/lktg/2017/6/index.htm}}

\newcommand{\feto}{\bibitem[Fe21]{Fe21} \emph{M. Fedorov.} A description of values of Seifert form for punctured $n$-manifolds in $(2n-1)$-space, arXiv:2107.02541.}

\newcommand{\ffen}{\bibitem[FF89]{FF89} * \emph{А. Т. Фоменко и Д. Б. Фукс.} Курс гомотопической топологии. М.: Наука, 1989.}

\newcommand{\ffene}{\bibitem[FF89]{FF89} * \emph{A.T. Fomenko and D.B. Fuchs.} Homotopical Topology, Springer, 2016.}

%A course in homotopy theory (in Russian). Moscow: Nauka, 1989.
%English translation of a preliminary edition: Budapest, AK, 1986.}

\newcommand{\fhzo}{\bibitem[FH10]{FH10}  \emph{M. Farber, E. Hanbury}. Topology of Configuration Space of Two Particles on a Graph, II. Algebr. Geom. Topol. 10 (2010) 2203--2227. arXiv:1005.2300.}

%arXiv:0903.2180

\newcommand{\fkosc}{\bibitem[FK17]{FK17} \emph{R. Fulek, J. Kyn{\v{c}}l,} Counterexample to an Extension of the Hanani-Tutte Theorem on the Surface of Genus 4, Combinatorica, 39 (2019) 1267--1279, arXiv:1709.00508.}

\newcommand{\fkos}{\bibitem[FK17]{FK17} \emph{R. Fulek, J. Kyn{\v{c}}l,} Hanani-Tutte for approximating maps of graphs, arXiv:1705.05243.}

\newcommand{\fkon}{\bibitem[FK19]{FK19} \emph{R. Fulek, J. Kyn{\v{c}}l,}
$\Z_2$-genus of graphs and minimum rank of partial symmetric matrices,
35th Intern. Symp. on Comp. Geom. (SoCG 2019), Article No. 39; pp. 39:1--39:16, \linebreak
\url{https://drops.dagstuhl.de/opus/volltexte/2019/10443/pdf/LIPIcs-SoCG-2019-39.pdf}.
We refer to numbering in arXiv version: arXiv:1903.08637.}

\newcommand{\fktnf}{\bibitem[FKT]{FKT} \emph{M. H. Freedman, V. S. Krushkal and P. Teichner.} Van Kampen's
embedding obstruction is incomplete for 2-complexes in~$\R^4$, Math. Res. Letters. 1994. 1. P.~167-176.}

\newcommand{\fltf}{\bibitem[Fl34]{Fl34} \emph{A. Flores}, \"Uber $n$-dimensionale Komplexe die im $E^{2n+1}$ absolut selbstverschlungen sind, Ergeb. Math. Koll. 6 (1934) 4--7.}

\newcommand{\fo}{\bibitem[Fo]{Fo} * \emph{L. Fortnow.} Time for Computer Science to Grow Up,  \url{https://people.cs.uchicago.edu/~fortnow/papers/growup.pdf}.}

\newcommand{\fozf}{\bibitem[Fo04]{Fo04} * \emph{R. Fokkink.} A forgotten mathematician, Eur. Math. Soc. Newsletter 52 (2004) 9--14.}

%\url{http://www.ems-ph.org/journals/newsletter/pdf/2004-06-52.pdf}.}

\newcommand{\fpstz}{\bibitem[FPS]{FPS} \emph{R. Fulek, M.J. Pelsmajer and M. Schaefer.}
Strong Hanani-Tutte for the Torus, arXiv:2009.01683.}

\newcommand{\frse}{\bibitem[Fr78]{Fr78} \emph{M. Freedman,} Quadruple points of 3-manifolds in $S^4$, Comment. Math. Helv. 53 (1978), 385-394.}

\newcommand{\fres}{\bibitem[FR86]{FR86} \emph{R. Fenn, D. Rolfsen.}
Spheres may link homotopically in 4-space, J. London Math. Soc. 34 (1986) 177-184.}

\newcommand{\frofea}{\bibitem[Fr15']{Fr15'} \emph{F. Frick}, Counterexamples to the topological Tverberg conjecture, arXiv:1502.00947v1.}

%http://arxiv.org/abs/1502.00947

\newcommand{\frof}{\bibitem[Fr15]{Fr15} \emph{F. Frick}, Counterexamples to the topological Tverberg conjecture,
Oberwolfach reports, 12:1 (2015), 318--321. arXiv:1502.00947.}

\newcommand{\fros}{\bibitem[Fr17]{Fr17} \emph{F. Frick}, O\lowercase{N AFFINE TVERBERG-TYPE RESULTS WITHOUT CONTINUOUS GENERALIZATION}, arXiv:1702.05466}

%https://arxiv.org/pdf/1702.05466.pdf,

\newcommand{\fstz}{\bibitem[FS20]{FS20} \emph{F. Frick and P. Sober\'on}, The topological Tverberg problem beyond prime powers, arXiv:2005.05251.}

\newcommand{\ftss}{\bibitem[FT77]{FT77} \emph{R. Fenn, P. Taylor,} Introducing doodles, pp. 37-43
in: Topology of Low-Dimensional Manifolds, Proceedings of the Second Sussex Conference, 1977,
Ed. R. Fenn, V. 722 of Lecture Notes in Math.}

\newcommand{\fvto}{\bibitem[FV21]{FV21} \emph{M. Filakovsk\'y, L. Vok\v r\'inek.} Computing homotopy classes for diagrams, Discr. Comp. Geom. 70 (2023), 866--920. arXiv:2104.10152.}

\newcommand{\fwz}{\bibitem[FWZ]{FWZ} \emph{M. Filakovsk\'y, U. Wagner, S. Zhechev.} Embeddability of simplicial complexes is undecidable. Oberwolfach reports, to appear.}

\newcommand{\fwztz}{\bibitem[FWZ]{FWZ} \emph{M. Filakovsk\'y, U. Wagner, S. Zhechev.} Embeddability of simplicial complexes is undecidable. Proceedings of the 2020 ACM-SIAM Symposium on Discrete Algorithms.}

%\url{https://epubs.siam.org/doi/pdf/10.1137/1.9781611975994.47}

\newcommand{\ga}{\bibitem[GA]{GA} * \url{https://en.wikipedia.org/wiki/Galactic_algorithm}}

\newcommand{\gatt}{\bibitem[Ga23]{Ga23} \emph{T. Garaev}, On drawing $K_5$ minus an edge in the plane, arXiv:2303.14503.}

\newcommand{\gdikrse}{\bibitem[GDI]{GDI} * {\it A. Chernov, A. Daynyak, A. Glibichuk, M. Ilyinskiy, A. Kupavskiy, A. Raigorodskiy and A. Skopenkov,} Elements of Discrete Mathematics As a Sequence of Problems (in Russian),
MCCME, Moscow, 2016. Update of a part: \url{http://www.mccme.ru/circles/oim/discrbook.pdf}}

\newcommand{\gdikrs}{\bibitem[GDI]{GDI} * {\it А.А. Глибичук, А.Б. Дайняк, Д.Г. Ильинский, А.Б. Купавский, А.М. Райгородский, А.Б. Скопенков, А.А. Чернов,} Элементы дискретной математики в задачах, М, МЦНМО, 2016. Обновляемая версия части книги: 
\url{http://www.mccme.ru/circles/oim/discrbook.pdf}}

\newcommand{\giso}{\bibitem[Gi71]{Gi71} * {\it S. Gitler,} Immersion and Embedding of Manifolds, Proc. Symp. Pure Math. 22, 87-96 (1971).}

\newcommand{\gkp}{\bibitem[GKP]{GKP} * {\it R. Graham, D. Knuth, and O. Patashnik,} Concrete Mathematics: A Foundation for Computer Science, Addison–Wesley, first published in 1989, \url{https://www.csie.ntu.edu.tw/~r97002/temp/Concrete\%20Mathematics\%202e.pdf}.}

\newcommand{\gmpptw}{\bibitem[GMP+]{GMP+} \emph{X. Goaoc, I. Mabillard, P. Pat\'ak, Z. Pat\'akov\'a, M. Tancer, U. Wagner}, On Generalized Heawood Inequalities for Manifolds: a van Kampen--Flores-type Nonembeddability Result,
Israel J. Math., 222(2) (2017) 841-866. arXiv:1610.09063.}

%https://core.ac.uk/download/pdf/62919875.pdf

\newcommand{\gppot}{\bibitem[GPP+]{GPP+} \emph{X. Goaoc, P. Pat\'ak, Z. Pat\'akov\'a, M. Tancer, and U. Wagner.} Bounding Helly numbers via Betti numbers. In 31st International Symposium on Computational Geometry, volume 34
of LIPIcs. Leibniz Int. Proc. Inform., pp. 507-521. Schloss Dagstuhl. Leibniz-Zent. Inform., Wadern, 2015. Full version: arXiv:1310.4613.}

\newcommand{\group}{\bibitem[Gr]{Gr} * \url{https://en.wikipedia.org/wiki/Groupthink}}

\newcommand{\grsz}{\bibitem[Gr69]{Gr69} \emph{B. Gr\"unbaum.} Imbeddings of simplicial complexes. Comment. Math. Helv., 44:1, 502--513, 1969.}

%https://link.springer.com/article/10.1007/BF02564551

\newcommand{\gres}{\bibitem[Gr86]{Gr86} * \emph{M. Gromov}, Partial Differential Relations,
Ergebnisse der Mathematik und ihrer Grenzgebiete (3), Springer Verlag, Berlin-New York, 1986.}

\newcommand{\groz}{\bibitem[Gr10]{Gr10} \emph{M. Gromov,}
\newblock Singularities, expanders and topology of maps. Part 2: From combinatorics to topology via algebraic isoperimetry, \newblock Geometric and Functional Analysis 20 (2010), no.~2, 416--526.}

\newcommand{\grsn}{\bibitem[GR79]{GR79} \emph{J. L. Gross	and R. H. Rosen}, A linear time planarity algorithm for 2-complexes, Journal of the ACM, 26:4 (1979), 611--617.}

\newcommand{\gs}{\bibitem[GS]{GS} \emph{М. Гортинский и О. Скрябин.} Критерий вложимости графов в плоскость вдоль прямой, препринт.}

\newcommand{\gssn}{\bibitem[GS79]{GS} \emph{P.~M. Gruber and R.~Schneider,} Problems in geometric convexity. In {\em Contributions to geometry (Proc. Geom. Sympos., Siegen, 1978)}, 255--278. Birkh{\"a}user, Basel-Boston, Mass., 1979.}

\newcommand{\gsnn}{\bibitem[GS99]{GS99} \emph{R. Gompf and A. Stipsicz,}
4-manifolds and Kirby calculus, GSM20, AMS, Providence, RI, 1999.}

%Graduate Studies in Mathematics 20, American Mathematical Society, MR1707327 (2000h:57038)

\newcommand{\gszs}{\bibitem[GS06]{GS06} \emph{D. Goncalves and A. Skopenkov,} Embeddings of homology equivalent manifolds with boundary, Topol. Appl., 153:12 (2006) 2026-2034. arxiv:1207.1326.}

\newcommand{\gssoe}{\bibitem[GSS+]{GSS+} * Projections of skew lines, presented by A. Gaifullin, A. Shapovalov, A. Skopenkov and M. Skopenkov, \url{http://www.turgor.ru/lktg/2001/index.php}.}

\newcommand{\gtes}{\bibitem[GT87]{GT87} * \emph{J. L. Gross and T. W. Tucker.}
Topological graph theory. New York: Wiley-Interscience, 1987.}

\newcommand{\guzn}{\bibitem[Gu09]{Gu09} \emph{A. Gundert.} On the complexity of embeddable simplicial complexes. Diplomarbeit, Freie Universit\"at Berlin, 2009. 	arXiv:1812.08447.}

\newcommand{\ha}{\bibitem[Ha]{Ha} * \emph{F. Harary.} Graph theory.
Рус. пер.: Ф. Харари. Теория графов. М., Мир, 1973.}

\newcommand{\hats}{\bibitem[Ha37]{Ha37} \emph{W. Hantzsche,} Einlagerung von Mannigfaltigkeiten in euklidische R\" aume, Math. Zeitschrift, 43:1 (1937) 38--58.}

\newcommand{\hastk}{\bibitem[Ha62k]{Ha62k} {\em A.~Haefliger,}  Knotted $(4k-1)$-spheres in $6k$-space, Ann. of Math. 75 (1962) 452--466.}

\newcommand{\hastl}{\bibitem[Ha62l]{Ha62l} \emph{A. Haefliger,} Differentiable links, Topology, 1 (1962) 241--244.}

\newcommand{\hast}{\bibitem[Ha63]{Ha63} \emph{A.~Haefliger,} Plongements differentiables dans le domain stable, Comment. Math. Helv. 36 (1962-63) 155--176.}

\newcommand{\hassa}{\bibitem[Ha66A]{Ha66A} \textit{A. Haefliger}. Differential embeddings of~$S^n$ in $S^{n+q}$ for $q>2$. Ann. Math. (2), 83 (1966), 402--~436.}

\newcommand{\hass}{\bibitem[Ha66C]{Ha66C} \emph{A.~Haefliger,}  Enlacements de spheres en codimension superiure \`a 2, Comment. Math. Helv. 41 (1966-67) 51--72.}

\newcommand{\hase}{\bibitem[Ha68]{Ha68} \emph{A. Haefliger,} Knotted Spheres and Related Geometric Topic,
in Proc. Int. Congr. Math., Moscow, 1966 (Mir, Moscow, 1968), 437--445.}

\newcommand{\hasn}{\bibitem[Ha69]{Ha69} \emph{L.~S.~Harris,} Intersections and embeddings of polyhedra, Topology 8 (1969) 1--26.}

\newcommand{\hasf}{\bibitem[Ha74]{Ha74} * \emph{P. Halmos,} How to talk mathematics. Notices of the Amer. Math. Soc., 21 (1974) 155--158.}

\newcommand{\haef}{\bibitem[Ha84]{Ha84} \emph{N. Habegger,} Obstruction to embedding disks II: a proof of a conjecture by Hudson, Topol. Appl. 17 (1984).}

\newcommand{\haes}{\bibitem[Ha86]{Ha86} \emph{N. Habegger,} Knots and links in codimension greater than 2, Topology, 25:3 (1986) 253--260.}

\newcommand{\hogr}{\bibitem[HG]{HG} * \url{http://www.map.mpim-bonn.mpg.de/Homology_groups_(simplicial;_simple_definition)}}

\newcommand{\hifn}{\bibitem[Hi59]{Hi59} \emph{M. W. Hirsch.} Immersions of manifolds, Trans. Amer. Math. Soc. 93 (1959) 242--276.}

\newcommand{\hjsf}{\bibitem[HJ64]{HJ64} \emph{R. Halin and H. A. Jung.}
Karakterisierung der Komplexe der Ebene und der 2-Sph\"are, Arch. Math. 1964. 15. P.~466-469.}

\newcommand{\hkne}{\bibitem[HK98]{HK98} \emph{N. Habegger and U. Kaiser,} Link homotopy in 2--metastable range, Topology 37:1 (1998) 75--94.}

\newcommand{\hmsnt}{\bibitem[HMS]{HMS93} * \emph{C. Hog-Angeloni, W. Metzler and A. J. Sieradski.}
Two-dimensional homotopy and combinatorial group theory. Cambridge: Cambridge Univ. Press, 1993. (London Math. Soc. Lecture Notes, 197).}

\newcommand{\ho}{\bibitem[Ho]{Ho} * The Hopf fibration, \url{https://www.youtube.com/watch?v=AKotMPGFJYk}}

\newcommand{\hozs}{\bibitem[Ho06]{Ho06} \emph{H. van der Holst,} Graphs and obstructions in four dimensions, J. Combin. Theory Ser. B 96:3 (2006), 388--404.}

%https://www.sciencedirect.com/science/article/pii/S0095895605001425?via%3Dihub

\newcommand{\hpzn}{\bibitem[HP09]{HP09} \emph{H. van der Holst and R. Pendavingh,} On a graph property generalizing planarity and flatness, Combinatorica, 29 (2009) 337--361.}

\newcommand{\hssf}{\bibitem[HS64]{HS64} \emph{A. Haefliger and B. Steer,} Symmetry of linking coefficients, Comment. Math. Helv. 39 (1964) 259-270.}
%https://www.maths.ed.ac.uk/~v1ranick/papers/haefstee.pdf
%1.5 composition via Pontryagin construction

\newcommand{\htsf}{\bibitem[HT74]{HT74} \emph{J. Hopcroft and R. E. Tarjan,} Efficient planarity testing, J. of the Association for Computing Machinery, 21:4 (1974) 549--568.}

\newcommand{\hufn}{\bibitem[Hu59]{hu59} * \emph{S. T. Hu,} Homotopy Theory, Academic Press, New York, 1959.}

\newcommand{\husn}{\bibitem[Hu69]{Hu69} * \emph{J. F. P. Hudson.} Piecewise linear topology, W. A. Benjamin, Inc., New York-Amsterdam, 1969.}

\newcommand{\io}{\bibitem[Io]{Io} * \url{https://en.wikipedia.org/wiki/Category:Impossible_objects}}

\newcommand{\info}{\bibitem[IF]{IF} * \url{http://www.map.mpim-bonn.mpg.de/Intersection_form}}

\newcommand{\irsf}{\bibitem[Ir65]{Ir65} \emph{M.~C.~Irwin,} Embeddings of polyhedral manifolds, Ann. of Math. (2)
82 (1965) 1--14.}

\newcommand{\isot}{\bibitem[Is]{Is} * \url{http://www.map.mpim-bonn.mpg.de/Isotopy}}

\newcommand{\jqnt}{\bibitem[JQ93]{JQ93} * \emph{A. Jaffe, F. Quinn,} ``Theoretical mathematics'': Toward a cultural synthesis of mathematics and theoretical physics. Bull.Am.Math.Soc. 29 (1993) 1-13. arXiv:math/9307227.}

\newcommand{\jozt}{\bibitem[Jo02]{Jo02} \emph{C. M. Johnson.} An obstruction to embedding a simplicial $n$-complex into a $2n$-manifold, Topology Appl. 122:3 (2002) 581--591.}

\newcommand{\jvz}{\bibitem[JVZ]{JVZ} D. Joji\'c, S. T. Vre\'cica, R. T. \v Zivaljevi\' c,
Topology and combinatorics of 'unavoidable complexes', arXiv:1603.08472v1.}
%\url{http://arxiv.org/abs/1603.08472}

%\bibitem[JW54]{JW54} I. M. James and J. H. C. Whitehead, The homotopy theory of sphere bundles over spheres,
%Proc. London Math. Soc. (3)  I: 4 (1954) 196--218, II: 5 (1955) 148--166.

\newcommand{\kalai}{\bibitem[Ka]{Ka} G. Kalai, From Oberwolfach: The Topological Tverberg Conjecture is False, `Combinatorics and more' blog post, February 6, 2015, \url{gilkalai.wordpress.com}}

\newcommand{\kh}{\bibitem[Kh]{Kh} \emph{А.И. Храбров.} Руководство по чтению лекций
\url{http://vm.tstu.tver.ru/topics/pdf_tests/lection.pdf}}

\newcommand{\kho}{\bibitem[Kho]{Kho} \emph{N. Khoroshavkina.} A simple characterization of graphs of cutwidth 2, arXiv:1811.06716.}

\newcommand{\kkrot}{\bibitem[KKR]{KKR} \emph{K. Kawarabayashi, Y. Kobayashi and B. Reed.} The disjoint paths problem in quadratic time, J. of Comb. Theory, Ser. B, 102:2 (2012), 424--435.}

\newcommand{\kmsth}{\bibitem[KM63]{KM63} \emph{M. A. Kervaire and J. W. Milnor,} Groups of homotopy spheres. I,  Ann. of Math. (2) 77 (1963), 504-537.}

\newcommand{\kozeru}{\bibitem[Ko18]{Ko18} * \emph{Е. Колпаков.}
Доказательство теоремы Радона при помощи понижения размерности, Мат. Просвещение, 23 (2018), arXiv:1903.11055.}

\newcommand{\koze}{\bibitem[Ko18]{Ko18} * \emph{E. Kolpakov.}
A proof of Radon Theorem via lowering of dimension, Mat. Prosveschenie, 23 (2018), arXiv:1903.11055.}

\newcommand{\ko}{\bibitem[Ko]{Ko} \emph{E. Kolpakov.} A `converse' to the Constraint Lemma, arXiv:1903.08910.}

\newcommand{\koon}{\bibitem[Ko19]{Ko19} \emph{E. Kogan.} Linking of three triangles in 3-space, arXiv:1908.03865.}

\newcommand{\koto}{\bibitem[Ko21]{Ko21} \emph{E. Kogan.} On the rank of $\Z_2$-matrices with free entries on the diagonal, arXiv:2104.10668.}

\newcommand{\koee}{\bibitem[Ko88]{Ko88} \emph{U. Koschorke.} Link maps and the geometry of their invariants,
Manuscripta Math. 61:4 (1988) 383--415.}

\newcommand{\kono}{\bibitem[Ko91]{Ko91} \emph{U. Koschorke.} Link homotopy with many components,
Topology 30:2 (1991) 267--281.}

\newcommand{\kons}{\bibitem[Ko97]{Ko97} \emph{U. Koschorke.} A generalization of Milnor's $\mu$-invariants to higher-dimensional link maps, Topology 36:2 (1997) 301--324.}

\newcommand{\kps}{\bibitem[KPS]{KPS} * \emph{A. Kaibkhanov, D. Permyakov and A. Skopenkov.}
Realization of graphs with rotation, \url{http://www.turgor.ru/lktg/2005/3/index.htm}.}

\newcommand{\krzz}{\bibitem[Kr00]{Kr00} \emph{V. S. Krushkal.} Embedding obstructions and 4-dimensional thickenings of 2-complexes, Proc. Amer. Math. Soc. 128:12 (2000) 3683--3691. arXiv:math/0004058. }

\newcommand{\ksnn}{\bibitem[KS99]{KS99} * \emph{П. Кожевников и А. Скопенков.} Узкие деревья на плоскости, Мат. Образование. 1999. 2-3. С.~126-131.}

\newcommand{\kstz}{\bibitem[KS20]{KS20} \emph{R. Karasev and A. Skopenkov.}
Some `converses' to intrinsic linking theorems, Discr. Comp. Geom., 70:3 (2023), 921--930, arXiv:2008.02523.}

%https://doi.org/10.1007/s00454-023-00505-0

\newcommand{\ksto}{\bibitem[KS21]{KS21} * \emph{E. Kogan and A. Skopenkov.} A short exposition of the Patak-Tancer theorem on non-embeddability of $k$-complexes in $2k$-manifolds,  arXiv:2106.14010.}

\newcommand{\kstoe}{\bibitem[KS21e]{KS21e} \emph{E. Kogan and A. Skopenkov.}
Embeddings of $k$-complexes in $2k$-manifolds and minimum rank of partial symmetric matrices, arXiv:2112.06636v2.}

\newcommand{\kutt}{\bibitem[Ku23]{Ku23} \emph{W. K\"uhnel.} Generalized Heawood Numbers, The Electronic Journal of Combinatorics, 30:4 (2023) \#P4.17.}

%https://www.combinatorics.org/ojs/index.php/eljc/article/view/v30i4p17

\newcommand{\kuse}{\bibitem[Ku68]{Ku68} * \emph{К. Куратовский.} Топология. Т.~1,~2. М.: Мир, 1969.}

\newcommand{\kunfo}{\bibitem[Ku94]{Ku94} \emph{W. K\"uhnel.} Manifolds in the skeletons of convex polytopes, tightness, and generalized Heawood inequalities. In Polytopes: abstract, convex and computational (Scarborough, ON, 1993), volume 440 of NATO Adv. Sci. Inst. Ser. C Math. Phys. Sci., pp. 241--247. Kluwer
Acad. Publ., Dordrecht, 1994.}

%https://www.math.ucdavis.edu/~deloera/MISC/LA-BIBLIO/trunk/Kuhnel.pdf
%https://link.springer.com/chapter/10.1007/978-94-011-0924-6_11

\newcommand{\kunf}{\bibitem[Ku95]{Ku95} * \emph{W. K\"uhnel}, Tight Polyhedral Submanifolds and Tight Triangulations, Lecture Notes in Math. 1612, Springer, 1995.}

\newcommand{\lazz}{\bibitem[La00]{La00} \emph{F. Lasheras.} An obstruction to 3-dimensional thickening,
Proc. Amer. Math. Soc. 2000. 128. P.~893-902.}

\newcommand{\lfma}{\bibitem[LF]{LF} \url{http://www.map.mpim-bonn.mpg.de/Linking_form}}

\newcommand{\lloe}{\bibitem[LL18]{LL18} \emph{A.S. Levine and T. Lidman.} Simply connected, spineless 4-manifolds, Forum of Math., Sigma, 7 (2019) e14, 1--11, arxiv:1803.01765.}

\newcommand{\lo}{\bibitem[Lo]{Lo} M.~de~Longueville. Notes on the topological Tverberg theorem.
Discrete Math.  247 (2002), no.~1--3, 271--297.
(The paper first appeared in
%a volume of selected papers in honor of Helge Tverberg,
Discrete Math. 241 (2001) 207--233, but the original version suffered from serious publisher's typesetting errors.)}

\newcommand{\loot}{\bibitem[Lo13]{Lo13} \emph{M. de Longueville.} A course in topological combinatorics. Universitext. Springer, New York (2013).}

\newcommand{\lssn}{\bibitem[LS69]{LS69} \emph{W. B. R. Lickorish and L. C. Siebenmann.}
Regular neighborhoods and the stable range,  Trans. Amer. Math. Soc.. 1969. 139. P.~207-230.}

\newcommand{\lsne}{\bibitem[LS98]{LS98} \emph{L. Lovasz and A. Schrijver,}
A Borsuk theorem for antipodal links and a spectral characterization of linklessly embeddable graphs, Proc. Amer. Math. Soc. 126:5 (1998), 1275-1285.}

\newcommand{\ltof}{\bibitem[LT14]{LT14} \emph{E. Lindenstrauss and M. Tsukamoto,} Mean dimension and an embedding problem: an example, Israel J. Math. 199 (2014).}

%http://www.math.huji.ac.il/~elon/Publications/mdim-example.pdf

\newcommand{\lyzf}{\bibitem[LY04]{LY04} * \emph{Y. Lin and A. Yang,} On 3-cutwidth critical graphs, Discrete Mathematics, 275 (2004), 339--346.}

\newcommand{\lz}{\bibitem[LZ]{LZ} * \emph{S. Lando and A. Zvonkin.} Embedded Graphs. Springer.}

\newcommand{\maez}{\bibitem[Ma80]{Ma80} * R. Mandelbaum, {\em Four-Dimensional Topology: An introduction},
Bull. Amer. Math. Soc. (N.S.) 2 (1980) 1-159.}

\newcommand{\mast}{\bibitem[Ma73]{Ma73} \emph{С. В. Матвеев.} Специальные остовы кусочно-линейных многообразий, Мат. Сборник. 1973. 92. С.~282-293.}

\newcommand{\maste}{\bibitem[Ma73]{Ma73} \emph{S. V. Matveev.} Special skeletons of PL manifolds (in Russian), Mat. Sbornik. 1973. 92. P.~282-293.}

\newcommand{\manz}{\bibitem[Ma90]{Ma90} \emph{W. S.  Massey.} Homotopy classification of 3-component links of codimension greater than 2, Topol.  Appl. 34 (1990) 269--300.}

\newcommand{\mans}{\bibitem[Ma97]{Ma97} \emph{Yu. Makarychev.} A short proof of Kuratowski's graph planarity criterion, J. of Graph Theory, 25 (1997), 129--131.}

\newcommand{\matns}{\bibitem[Mat97]{Mat97} \emph{J. Matou\v sek.} A Helly-type theorem for unions of convex sets. Discr. Comp. Geom., 18:1 (1997) 1-12.}

\newcommand{\mazt}{\bibitem[Ma03]{Ma03} * \emph{J.~Matou{\v{s}}ek.} Using the {B}orsuk-{U}lam theorem:
Lectures on topological methods in combinatorics and geometry. Springer Verlag, 2008.}

%http://books.google.ru/books/about/
%Using\_the\_Borsuk\_Ulam\_Theorem.html?id=tru-TWQFDAIC\&redir\_esc=y

\newcommand{\mazf}{\bibitem[Ma05]{Ma05} \emph{V. Manturov.} A proof of the Vasiliev conjecture on the planarity of singular links, Izv. RAN 2005.}

\newcommand{\metn}{\bibitem[Me29]{Me29} \emph{K. Menger.} \"Uber pl\"attbare Dreiergraphen und Potenzen nicht pl\"attbarer Graphen, Ergebnisse Math. Kolloq., 2 (1929) 30--31.}

\newcommand{\mezf}{\bibitem[Me04]{Me04} \emph{S. Melikhov.} Sphere eversions and realization of mappings, Trudy MIAN 247 (2004) 159-181 (in Russian) arXiv:math.GT/0305158.}

\newcommand{\mezs}{\bibitem[Me06]{Me06} \emph{S. A. Melikhov}, The van Kampen obstruction and its relatives, 	
Proc. Steklov Inst. Math 266 (2009), 142-176 (= Trudy MIAN 266 (2009), 149-183), arXiv:math/0612082.}
%https://arxiv.org/abs/math/0612082

\newcommand{\meoo}{\bibitem[Me11]{Me11} \emph{S. A. Melikhov}, Combinatorics of embeddings, arXiv:1103.5457.}
%\url{https://arxiv.org/abs/1103.5457}

\newcommand{\meos}{\bibitem[Me17]{Me17} \emph{S. Melikhov,} Gauss type formulas for link map invariants, arXiv:1711.03530.}
%unpublished, 2007.

\newcommand{\meoe}{\bibitem[Me18]{Me18} \emph{S. A. Melikhov,} A triple-point Whitney trick, J. Topol. Anal., 2018, 1--6. arXiv:2210.04016.}

%https://doi.org/10.1142/S1793525319500730

\newcommand{\metz}{\bibitem[Me20]{Me20} \emph{S. A. Melikhov,} Topological isotopy and Cochran's derived invariants, in `Topology, Geometry, and Dynamics: Rokhlin Memorial', Contemporary Mathematics, 772, AMS, Providence, RI, 2021. arXiv:2011.01409.}

\newcommand{\mett}{\bibitem[Me22]{Me22} \emph{S. A. Melikhov,} Embeddability of joins and products of polyhedra, Topol. Methods in Nonlinear Analysis, 60:1 (2022), 185-201. arXiv:2210.04015.}

\newcommand{\miff}{\bibitem[Mi54]{Mi54} \emph{J. Milnor,} Link groups, Ann. of Math. 59 (1954), 177--195.}

\newcommand{\miso}{\bibitem[Mi61]{Mi61} \emph{J. Milnor,} A procedure for killing homotopy groups of differentiable manifolds, Proc. Sympos. Pure Math, Vol. III (1961), 39--55.}

\newcommand{\mins}{\bibitem[Mi97]{Mi97} \emph{P. Minc.} Embedding simplicial arcs into the plane, Topol. Proc. 1997. 22. 305--340.}

%\bibitem[MF90]{MF90} \emph{*С. В. Матвеев и А. Т. Фоменко.}
%Алгоритмические и компьютерные методы в трехмерной топологии. М.: Изд-во МГУ. Или: М.: Наука. 1990.

\newcommand{\adnsvr}{\bibitem[MNS]{MNS} * \emph{А. Мирошников, О. Никитенко и А. Скопенков.} Циклы в графах и в гиперграфах: в направлении теории гомологий, arXiv:2406.16705.}
 
\newcommand{\dmnse}{\bibitem[MNS]{MNS} * \emph{A. Miroshnikov, O. Nikitenko, A. Skopenkov.}
Cycles in graphs and in hypergraphs: towards homology theory (in Russian), arXiv:2406.16705.}

\newcommand{\moss}{\bibitem[Mo77]{Mo77} * \emph{E. E. Moise.} Geometric Topology in Dimensions 2 and 3 (GTM), Springer-Verlag, 1977.}

\newcommand{\moen}{\bibitem[Mo89]{Mo89} \textit{B. Mohar}. An obstruction to embedding graphs in
surfaces. Discrete Math. 78 (1989) 135--142.}

\newcommand{\moze}{\bibitem[Mo08]{Mo08} \textit{T. Moriyama}. An invariant of embeddings of 3–manifolds in 6–manifolds and Milnor's triple linking number, J. Math. Sci. Univ. Tokyo, 18 (2011), 193--237. arXiv:0806.3733.}

%https://www.ms.u-tokyo.ac.jp/journal/pdf/jms180204.pdf

\newcommand{\mrst}{\bibitem[MRS+]{MRS+} \emph{A. de Mesmay, Y. Rieck, E. Sedgwick, M. Tancer,}
Embeddability in $\R^3$ is NP-hard. arXiv:1708.07734.}

\newcommand{\mesczs}{\bibitem[MS06]{MS06} \emph{S.A. Melikhov, E.V. Shchepin,} The telescope approach to embeddability of compacta. arXiv:math.GT/0612085.}

\newcommand{\msos}{\bibitem[MS17]{MS17}  \emph{T. Maciazek, A. Sawicki.} Homology groups for particles on one-connected graphs
J. Math. Phys. 58, 062103 (2017). arXiv:1606.03414.}

\newcommand{\mstwof}{\bibitem[MST+]{MST+} \emph{J. Matou\v sek, E. Sedgwick, M. Tancer, U. Wagner}, Embeddability in the 3-sphere is decidable, Journal of the ACM 65:1 (2018) 1--49, arXiv:1402.0815.}

%DOI10.1145/3078632 Proc. 30th Ann. ACM Symp. on Comput. Geom. (SoCG), 2014, 78--84.
%\url{http://arxiv.org/abs/1402.0815}

\newcommand{\mtzo}{\bibitem[MT01]{MT01} * \emph{B. Mohar and C. Thomassen.} Graphs on Surfaces.
The John Hopkins University Press, 2001.}

\newcommand{\mtwoz}{\bibitem[MTW10]{MTW10} \emph{J. Matou\v sek, M. Tancer, U. Wagner.} A geometric proof of
the colored Tverberg theorem, Discr. and Comp. Geometry, 47:2 (2012), 245--265. arXiv:1008.5275.}

%\url{http://arxiv.org/abs/1008.5275}

\newcommand{\mtwoo}{\bibitem[MTW]{MTW} \emph{J. Matou\v sek, M. Tancer, U. Wagner.}
Hardness of embedding simplicial complexes in $\R^d$, J. Eur. Math. Soc. 13:2 (2011), 259--295. arXiv:0807.0336.}

%\url{http://arxiv.org/abs/0807.0336}

%\bibitem[Mu74]{Mu74} J. Muncres, {\it On smoothing...,} Complement to the book by J. W. Milnor and J. D. Stasheff,
%{\it Characteristic Classes}, Ann. of Math. St. 76 (1974), Princeton Univ. Press, Princeton, NJ.

\newcommand{\mwoe}{\bibitem[MW18]{MW18} * \emph{F. Manin, S. Weinberger.} Algorithmic aspects of immersibility and embeddability, Intern. Math. Res. Notices, rnae170. arXiv:1812.09413.}

%https://doi.org/10.1093/imrn/rnae170

\newcommand{\mwsn}{\bibitem[MW69]{MW69} * \emph{J. MacWilliams}. Orthogonal matrices over finite fields. Amer. Math. Monthly, 76 (1969) 152--164.}

\newcommand{\mwofo}{\bibitem[MW14]{MW14} \emph{I. Mabillard and U. Wagner.} Eliminating Tverberg Points, I. An Analogue of the Whitney Trick, Proc. of the 30th Annual Symp. on Comp. Geom. (SoCG'14), ACM, New York, 2014, pp. 171--180.}

\newcommand{\mwof}{\bibitem[MW15]{MW15} \emph{I. Mabillard and U. Wagner.}
Eliminating Higher-Multiplicity Intersections, I. A Whitney Trick for Tverberg-Type Problems. arXiv:1508.02349.}

%\linebreak \url{http://arxiv.org/pdf/1508.02349v1.pdf}

\newcommand{\mwos}{\bibitem[MW16]{MW16} \emph{I. Mabillard and U. Wagner.} Eliminating Higher-Multiplicity Intersections, II. The Deleted Product Criterion in the $r$-Metastable Range. arXiv:1601.00876v2.}

\newcommand{\mwosd}{\bibitem[MW16']{MW16'} \emph{I. Mabillard and U. Wagner.} Eliminating Higher-Multiplicity Intersections, II. The Deleted Product Criterion in the r-Metastable Range,
Proceedings of the 32nd Annual Symposium on Computational Geometry (SoCG'16).}

\newcommand{\neno}{\bibitem[Ne91]{Ne91} \emph{S. Negami.} Ramsey theorems for knots, links and spatial graphs,
Trans. Amer. Math. Soc., 324 (1991), 527--541.}

%book on giving math talks
%https://sites.google.com/site/tutamnguyenphan/book.pdf

%My teaching statement
%https://sites.google.com/site/tutamnguyenphan/TeachingStatement.pdf

\newcommand{\nizz}{\bibitem[Ni00]{Ni00} \emph{R. Nikkuni.} The second skew-symmetric cohomology group and spatial embeddings of graphs, J. Knot Theory Ram. 9 (2000), 387–411.}

\newcommand{\nkon}{\bibitem[NKS]{NKS} * \emph{L. T. Nguyen, J. Kim, B. Shim.}
Low-Rank Matrix Completion: A Contemporary Survey. arXiv:1907.11705.}

\newcommand{\noss}{\bibitem[No76]{No76} * \emph{С. П. Новиков.} Топология-1. М.: Наука, 1976. (Итоги науки и техники. ВИНИТИ. Современные проблемы математики. Основные направления, 12).}

\newcommand{\nszn}{\bibitem[NS09]{NS09} \emph{I. Novik and E. Swartz,} Socles of Buchsbaum modules, complexes and posets, Adv. Math. 222 (2009), 2059-2084. arXiv:0711.0783.}

\newcommand{\nwns}{\bibitem[NW97]{NW97} \emph{A. Nabutovsky, S. Weinberger}. Algorithmic aspects of homeomorphism problems. arXiv:math/9707232.}

\newcommand{\omoe}{\bibitem[Om18]{Om18} * \emph{А. Омельченко,} Теория графов. М.: МЦНМО, 2018.}

\newcommand{\orszo}{\bibitem[ORS]{ORS} \emph{A. Onischenko, D. Repov\v s and A. Skopenkov.}
Resolutions of 2-polyhedra by fake surfaces and embeddings into $\R^4$, Contemp. Math.  288 (2001) 396--400.}

\newcommand{\ossf}{\bibitem[OS74]{OS74} \emph{R. P. Osborne and R. S. Stevens.} Group presentations
corresponding to spines of 3-manifolds, I, Amer. J.~Math. 1974. 96. P.~454-471; II, Amer. J.~Math. 1977. 234.
P.~213-243; III, Amer. J.~Math. 1977. 234 P.~245-251.}

% ??? уточнить номер журнала --- странно, что в 1974 г. номер 96,   а в 1977 уже 234.

\newcommand{\oz}{\bibitem[Oz]{Oz} \emph{M. \"Ozaydin,} Equivariant maps for the symmetric group, unpublished,
\url{http://minds.wisconsin.edu/handle/1793/63829}.}

\newcommand{\panof}{\bibitem[Pan15]{Pan15} \emph{K. Panagiotis.} A note on the topology of irreducible $SO(3)$-manifolds, 	arXiv:1508.06150.}

\newcommand{\paof}{\bibitem[Pa15]{Pa15} \emph{S. Parsa,} On links of vertices in simplicial $d$-complexes embeddable in the Euclidean $2d$-space, Discrete Comput. Geom. 59:3 (2018), 663--679.
This is arXiv:1512.05164v4 up to numbering of sections, theorems etc.; we refer to numbering in arxiv version.
Correction: Discrete Comput. Geom. 64:3 (2020) 227--228.}

\newcommand{\paoe}{\bibitem[Pa18]{Pa18} \emph{S. Parsa,} On links of vertices in simplicial $d$-complexes
embeddable in the euclidean $2d$-space, arXiv:1512.05164v6.}

\newcommand{\patz}{\bibitem[Pa20]{Pa20} \emph{S. Parsa,} On links of vertices in simplicial $d$-complexes
embeddable in the euclidean $2d$-space, arXiv:1512.05164v8.}

%still no reference to Sk

\newcommand{\patzl}{\bibitem[Pa20]{Pa20} \emph{S. Parsa,}
Correction to: On the Links of Vertices in Simplicial $d$-Complexes Embeddable in the Euclidean $2d$-Space,
Discrete Comput. Geom. 64:3 (2020) 227--228.}

\newcommand{\patza}{\bibitem[Pa20]{Pa20} \emph{S. Parsa,} On the Smith classes, the van Kampen obstruction and embeddability of $[3]*K$, arXiv:2001.06478.}

\newcommand{\patzb}{\bibitem[Pa20b]{Pa20b} \emph{S. Parsa,} On the embeddability of $[3]*K$, arXiv:2001.06506.}

\newcommand{\pato}{\bibitem[Pa21]{Pa21} \emph{S. Parsa,} Instability of the Smith index under joins and applications to embeddability, Trans. Amer. Math. Soc. 375 (2022), 7149--7185, arXiv:2103.02563.}

\newcommand{\pak}{\bibitem[Pa]{Pa} * \emph{I. Pak}, Lectures on Discrete and Polyhedral Geometry, \url{http://www.math.ucla.edu/~pak/geompol8.pdf}.}

\newcommand{\peze}{\bibitem[Pe08]{Pe08} \emph{Д. Пермяков.} Классификация погружений графов в плоскость,
Вестник МГУ, сер.1, 2008, N5, 55-56.}

\newcommand{\peos}{\bibitem[Pe16]{Pe16} \emph{Д. Пермяков.} Матем. сб., 207:6 (2016),  93--112.}

\newcommand{\pest}{\bibitem[Pe72]{Pe72} * \emph{B. B. Peterson.} The Geometry of Radon's Theorem, Amer. Math. Monthly 79 (1972), 949-963.}

%\url{http://hipatia.dma.ulpgc.es/profesores/personal/aph/ficheros/resolver/ficheros/crp/radon.pdf}.}

\newcommand{\prnf}{\bibitem[Pr95]{Pr95} * \emph{V. V. Prasolov.} Intuitive topology. Amer. Math. Soc., Providence, R.I., 1995.}

\newcommand{\prnfr}{\bibitem[Pr95]{Pr95} * \emph{В. В. Прасолов.} Наглядная топология. М.: МЦНМО, 1995.}

%http://carlossicoli.free.fr/P/Prasolov\_V.V.-Intuitive\_topology\_4-Universities\_Press\_India\_1998.pdf
%http://prasolov.loegria.net/nagltop-3ed.pdf

\newcommand{\przs}{\bibitem[Pr06]{Pr06} * \emph{V. V. Prasolov.}
Elements of Combinatorial and Differential Topology, 2006, GSM 74, Amer. Math. Soc., Providence, RI.}

\newcommand{\przsru}{\bibitem[Pr04]{Pr04} * \emph{В. В. Прасолов.}
Элементы комбинаторной и дифференциальной топологии. М.: МЦНМО, 2004. \url{http://www.mccme.ru/prasolov}.}

\newcommand{\przse}{\bibitem[Pr07]{Pr07} * \emph{V. V. Prasolov.} Elements of homology theory. 2007, GSM 74, Amer. Math. Soc., Providence, RI.}

%Russian version: Moscow, MCCME, 2006,

\newcommand{\przseru}{\bibitem[Pr06]{Pr06} * \emph{В. В. Прасолов.} Элементы теории гомологий. М.: МЦНМО, 2006.}

% \url{http://www.mccme.ru/prasolov}.}

\newcommand{\psns}{\bibitem[PS96]{PS96} * \emph{V. V. Prasolov, A. B. Sossinsky } Knots, Links, Braids, and 3-manifolds. Amer. Math. Soc. Publ., Providence, R.I., 1996.}

%Russian version:  \url{http://www.mccme.ru/prasolov}.}

\newcommand{\pszf}{\bibitem[PS05]{PS05} * \emph{В. В. Прасолов и М. Б. Скопенков.}
Рамсеевская теория зацеплений, Мат. Просвещение. 2005. 9. С.~108--115.}

\newcommand{\pszfen}{\bibitem[PS05]{PS05} * \emph{V. V. Prasolov and M.B. Skopenkov.}
Ramsey link theory, Mat, Prosvescheniye, 9 (2005), 108--115.}

\newcommand{\psoo}{\bibitem[PS11]{PS11} \emph{Y. Ponty and C. Saule.} A combinatorial framework for designing (pseudoknotted) RNA algorithms, Proc. of the 11th Intern. Workshop on Algorithms in Bioinformatics, WABI'11, 250--269.}

%\url{http://dl.acm.org/citation.cfm?id=2039967}

\newcommand{\pstz}{\bibitem[PS20]{PS20} \emph{S. Parsa and A. Skopenkov.} On embeddability of joins and their `factors', Topol. Appl., 326 (2023) 108409, arXiv:2003.12285.}

%https://doi.org/10.1016/j.topol.2023.108409

%\bibitem[Pa20]{Pa20} S. Parsa, On the embeddability of $[3]*K$. arXiv:2001.06506.

\newcommand{\psszn}{\bibitem[PSS]{PSS} \emph{M. J. Pelsmajer, M. Schaefer and D. Stasi.} Strong Hanani-Tutte on the projective plane. SIAM J. Discrete Math., 23:3 (2009) 1317--1323.}

\newcommand{\psszs}{\bibitem[PSS]{PSS} \emph{M. J. Pelsmajer, M. Schaefer, and D. \v Stefankovi\v c.}
Removing even crossings. J. Combin. Theory Ser. B, 97(4):489–500, 2007.}

\newcommand{\pton}{\bibitem[PT19]{PT19} \emph{P. Pat\'ak and M. Tancer.} Embeddings of $k$-complexes into $2k$-manifolds. Discrete Comput. Geom. 71 (2024), 960--991. arXiv:1904.02404.}

\newcommand{\pw}{\bibitem[PW]{PW} \emph{I. Pak, S. Wilson}, G\lowercase{EOMETRIC REALIZATIONS OF POLYHEDRAL COMPLEXES}, \linebreak \url{http://www.math.ucla.edu/~pak/papers/Fary-full31.pdf}.}

\newcommand{\razf}{\bibitem[RA05]{RA05} * \emph{J. L. Ram\'irez Alfons\'in.} Knots and links in spatial graphs: a survey. Discrete Math., 302 (2005), 225--242.}

\newcommand{\rep}{\bibitem[Rep]{Rep} Referee's report on the paper ``Some `converses' to intrinsic linking theorems', \url{https://www.mccme.ru/circles/oim/materials/ksreport.pdf}}

\newcommand{\rnoo}{\bibitem[RN11]{RN11} * \emph{R. L. Ricca, B. Nipoti.} Gauss' linking number revisited.
J. of Knot Theory and Its Ramif. 20:10 (2011) 1325--1343. \url{https://www.maths.ed.ac.uk/~v1ranick/papers/ricca.pdf} .}

\newcommand{\rrstz}{\bibitem[RRS]{RRS} * \emph{V. Retinskiy, A. Ryabichev and A. Skopenkov.}
Motivated exposition of the proof of the Tverberg Theorem (in Russian).
Mat. Prosveschenie, 27 (2021), 166--169. arXiv:2008.08361.}

%[RRS] В. Ретинский, А. Рябичев и А. Скопенков. Мотивированное изложение доказательства теоремы Тверберга,
%Мат. Просвещение, 27 (2021), 166--169. arXiv:2008.08361.

\newcommand{\rssec}{\bibitem[RS68]{RS68} \emph{C. P. Rourke and B. J. Sanderson,} Block bundles II, Ann. of Math. (2), 87 (1968) 431--483.}

\newcommand{\rsst}{\bibitem[RS72]{RS72} * \emph{C. P. Rourke and B. J. Sanderson,}
\newblock Introduction to Piecewise-Linear Topology,
\newblock \emph{Ergebn.\ der Math.} 69, Springer-Verlag, Berlin, 1972.}

\newcommand{\rsstr}{\bibitem[RS72]{RS72} * \emph{К. П. Рурк и Б. Дж. Сандерсон.} Введение в кусочно-линейную топологию, Москва. Мир. 1974.}

\newcommand{\rsns}{\bibitem[RS96]{RS96} * \emph{D. Repov\v s and A. B. Skopenkov.}
Embeddability and isotopy of polyhedra in Euclidean spaces,
%Труды МИРАН. 1996. 212;
Proc. of the Steklov Inst. Math. 1996. 212. P.~173-188.}

\newcommand{\rsne}{\bibitem[RS98]{RS98} \emph{D. Repov\v s and A. B. Skopenkov.}
A deleted product criterion for approximability of a map by embeddings, Topol. Appl. 1998. 87 P.~1-19.}

\newcommand{\rsnn}{\bibitem[RS99]{RS99} * \emph{D. Repov\v s and A. B. Skopenkov.} New results on embeddings of polyhedra and manifolds into Euclidean spaces,
%(in Russian), Uspekhi Mat. Nauk, 54:6 (1999) 61--109. English transl.:
Russ. Math. Surv. 54:6 (1999), 1149--1196.}

%\emph{Д. Реповш и А. Скопенков.}
%Новые результаты о вложениях полиэдров и многообразий в евклидовы пространства, УМН. 1999. 54:6. С.~61-109.

\newcommand{\rsnnd}{\bibitem[RS99']{RS99'} * \emph{Д. Реповш и А. Скопенков.}
Кольца Борромео и препятствия к вложимости, Труды МИРАН. 1999. 225. С.~331-338.}

\newcommand{\rszz}{\bibitem[RS00]{RS00} \emph{D. Repov\v s and A. Skopenkov.} Cell-like resolutions of polyhedra by special ones,  Colloq. Math. 2000. 86:2. P. 231--237.}

\newcommand{\rszzd}{\bibitem[RS00']{RS00'} * \emph{Д. Реповш и А. Скопенков.} Характеристические классы для начинающих, Мат. Просвещение. 2000. 4. С.~151-176.}

\newcommand{\rszo}{\bibitem[RS01]{RS01} \emph{D. Repovs and A. Skopenkov.} On contractible $n$-dimensional compacta, non-embeddable into $\R^{2n}$, Proc. Amer. Math. Soc. 129 (2001) 627--628.}

\newcommand{\rszt}{\bibitem[RS02]{RS02} * \emph{Д. Реповш и А. Скопенков.} Теория препятствий для начинающих,
Мат. Просвещение. 2002. 6. C.~60-77.}

\newcommand{\rszf}{\bibitem[RS04]{RS04} \emph{N. Robertson and P. Seymour.} Graph Minors. XX. Wagner's conjecture, J. of Comb. Theory, B, 92:2 (2004) 325--357.}

\newcommand{\rssnf}{\bibitem[RSS]{RSS95} \emph{D. Repov\v s, A. B. Skopenkov  and E. V. \v S\v cepin.}
On uncountable collections of continua and their span, Colloq. Math. 1995. 69:2. P.~289-296.}

\newcommand{\rssnfd}{\bibitem[RSS']{RSS95'} \emph{D. Repov\v s, A. B. Skopenkov and E. V \v S\v cepin.}
On embeddability of $X\times I$ into Euclidean space, Houston J.~Math. 1995. 21. P.~199-204.}

\newcommand{\rssz}{\bibitem[RSS+]{RSSZ} * \emph{A. Rukhovich, A. Skopenkov, M. Skopenkov, A. Zimin},
Realizability of hypergraphs, \url{https://www.turgor.ru/lktg/2013/1/1-1en.pdf} .}

%\url{http://www.turgor.ru/lktg/2013/1/index.htm}

\newcommand{\rstnt}{\bibitem[RST']{RST93} \emph{N. Robertson, P. Seymour and R. Thomas}, Linkless embeddings of graphs in 3-space, Bull. of the Amer. Math. Soc., 21 (1993) 84--89.}

\newcommand{\rstno}{\bibitem[RST]{RST91} * \emph{N. Robertson, P. Seymour and R. Thomas}, A survey of
linkless embeddings, Graph Structure Theory (Seattle, WA, 1991), Contemp. Math. 147, (1993) 125--136.}
%Amer. Math. Soc. Providence, RI,

%\bibitem[Ru]{Ru} \emph{Yu. B. Rudyak.} Piecewise linear structures on topological manifolds.
%\url{http://arxiv.org/abs/math/0105047}

\newcommand{\rwzl}{\bibitem[RWZ+]{RWZ+} \emph{Y. Ren, C. Wen, S. Zhen, N. Lei, F. Luo, D.X. Gu},
Characteristic class of isotopy for surfaces, J. Syst. Sci. Complex. 33 (2020) 2139--2156.}

\newcommand{\saeo}{\bibitem[Sa81]{Sa81} \emph{H. Sachs.} On spatial representation of finite graphs,
in: Finite and infinite sets (Eger, 1981), 649--662, Colloq. Math. Soc. Janos Bolyai, 37, North-Holland, Amsterdam, 1984.}

\newcommand{\sano}{\bibitem[Sa91]{Sa91} \emph{K. S. Sarkaria.}
A one-dimensional Whitney trick and Kuratowski's graph planarity criterion, Israel J.~Math. 73 (1991), 79--89.} 

%\url{http://kssarkaria.org/docs/One-dimensional.pdf}.}

\newcommand{\sanov}{\bibitem[Sa91g]{Sa91g} \emph{K. S. Sarkaria.} A generalized Van Kampen-Flores theorem, Proc. Amer. Math. Soc. 111 (1991), 559--565.}

\newcommand{\sant}{\bibitem[Sa92]{Sa92} \emph{K. S. Sarkaria.} Tverberg’s theorem via number fields. Israel J. Math., 79:317–320, 1992.}

\newcommand{\sann}{\bibitem[Sa99]{Sa99} O. Saeki {\em On punctured 3-manifolds in 5-sphere}, Hiroshima Math. J. 29 (1999) 255--272.}

%MR1704247 (2000h:57045)

\newcommand{\sazz}{\bibitem[Sa00]{Sa00} \emph{K. S. Sarkaria.} Tverberg partitions and Borsuk-Ulam theorems. Pacific J. Math., 196:1 (2000) 231--241.}

\newcommand{\sczf}{\bibitem[Sc04]{Sc04} \emph{T. Sch\"oneborn.} On the Topological Tverberg Theorem, arXiv:math/0405393.}

%http://arxiv.org/abs/math/0405393

\newcommand{\scot}{\bibitem[Sc13]{Sc13} * \emph{M. Schaefer.} Hanani-Tutte and related results. In Geometry --- intuitive, discrete, and convex, Bolyai Soc. Math. Stud., 24 (2013), 259--299.
\url{http://ovid.cs.depaul.edu/documents/htsurvey.pdf} }

%J\'anos Bolyai Math. Soc., Budapest, 2013.

\newcommand{\sctz}{\bibitem[Sc20]{Sc20} \emph{M. Schaefer.} The Graph Crossing Number and
its Variants: A Survey. The Electr. J. of Comb. (2020), DS21, \url{https://www.combinatorics.org/files/Surveys/ds21/ds21v5-2020.pdf}}

\newcommand{\scef}{\bibitem[Sc84]{Sc84} \emph{E.~V.~\v S\v cepin.} Soft mappings of manifolds, Russian Math. Surveys, 39:5 (1984).}
%, 209--224 (in Russian).

\newcommand{\shfs}{\bibitem[Sh57]{Sh57} \emph{A. Shapiro,} Obstructions to the embedding of a complex in a Euclidean space, I, The first obstruction, Ann. Math. 66 (1957), 256--269.}

%\url{http://www.maths.ed.ac.uk/~aar/papers/shapiro2.pdf}.}

\newcommand{\shen}{\bibitem[Sh89]{Sh89} * \emph{Ю. А. Шашкин,} Неподвижные точки, М., Наука, 1989.}

\newcommand{\shoe}{\bibitem[Sh18]{Sh18} * \emph{S. Shlosman},  Topological Tverberg Theorem: the proofs and the counterexamples, Russian Math. Surveys, 73:2 (2018), 175–182. arXiv:1804.03120.}

\newcommand{\sisn}{\bibitem[Si69]{Si69} \emph{K. Sieklucki.} Realization of mappings, Fund. Math. 1969. 65. P.~325-343.}

\newcommand{\sios}{\bibitem[Si16]{Si16} \emph{S. Simon,} Average-Value Tverberg Partitions via Finite Fourier Analysis, Israel J. Math., 216 (2016), 891-904, arXiv:1501.04612.}

%\url{http://arxiv.org/abs/1501.04612}

\newcommand{\sknf}{\bibitem[Sk94]{Sk94} \emph{А. Скопенков.} Геометрическое доказательство теоремы
Нойвирта об утолщаемости 2-мерных полиэдров, Math. Notes. 1995. 58:5. P.~1244-1247.}

% Мат. заметки. 1994. 56:2. С.~94-98 English transl.

\newcommand{\skns}{\bibitem[Sk97]{Sk97} \emph{A. Skopenkov,} On the deleted product criterion for embeddability of manifolds in $\R^m$, Comment. Math. Helv. 72 (1997), 543--555.}

\newcommand{\skne}{\bibitem[Sk98]{Sk98} \emph{A. B. Skopenkov.} On the deleted product criterion for embeddability in $\R^m$, Proc. Amer. Math. Soc., 126:8 (1998), 2467-2476.}

\newcommand{\skzz}{\bibitem[Sk00]{Sk00} \emph{A. Skopenkov,} On the generalized Massey--Rolfsen invariant for link maps, Fund. Math. 165 (2000), 1--15.}

\newcommand{\skzt}{\bibitem[Sk02]{Sk02} \emph{A. Skopenkov,} On the Haefliger-Hirsch-Wu invariants for embeddings and immersions, Comment. Math. Helv. 77 (2002), 78--124.}

\newcommand{\skzth}{\bibitem[Sk03]{Sk03} \emph{M. Skopenkov,} Embedding products of graphs into Euclidean spaces,
Fund. Math. 179 (2003),~191--198, arXiv:0808.1199.}

\newcommand{\skzthd}{\bibitem[Sk03']{Sk03'} \emph{M. Skopenkov,} On approximability by embeddings of cycles in the plane, Topol. Appl. 134 (2003),~1--22, arXiv:0808.1187.}

\newcommand{\skzf}{\bibitem[Sk05]{Sk05} * \emph{A. Skopenkov,}
On the Kuratowski graph planarity criterion, Mat. Prosveschenie, 9 (2005), 116-128. arXiv:0802.3820.}

%А. Скопенков Вокруг критерия Куратовского планарности графов, Мат. Просвещение. 2005. 9. С.~116-128.

\newcommand{\skzs}{\bibitem[Sk05i]{Sk05i} \emph{A. Skopenkov,} A new invariant and parametric connected sum of embeddings, Fund. Math. 197 (2007) 253--269. arxiv:math/0509621.}

\newcommand{\skzei}{\bibitem[Sk05]{Sk05} \emph{A.  Skopenkov,} A classification of smooth embeddings of
4-manifolds in 7-space, I, Topol. Appl., 157 (2010) 2094--2110. arXiv:math/0512594.}

\newcommand{\skze}{\bibitem[Sk06]{Sk06} * \emph{A. Skopenkov,} Embedding and knotting of manifolds in Euclidean spaces, London Math. Soc. Lect. Notes, 347 (2008) 248--342. arXiv:math/0604045.}

\newcommand{\skzsi}{\bibitem[Sk06']{Sk06'} \emph{A. Skopenkov,} A classification of smooth embeddings of 3-manifolds in 6-space, Math. Zeitschrift, 260:3 (2008) 647--672. arxiv:math/0603429.}

\newcommand{\skozp}{\bibitem[Sk08]{Sk08} \emph{A.  Skopenkov,} Embeddings of $k$-connected $n$-manifolds into
$\R^{2n-k-1}$. arxiv:math/0812.0263; earlier version published in Proc. Amer. Math. Soc., 138 (2010) 3377--3389.}

\newcommand{\skoz}{\bibitem[Sk10]{Sk10} * \emph{А. Скопенков,} Вложения в плоскость графов с вершинами степени 4,
Мат. просвещение, 21 (2017), arXiv:1008.4940.}

\newcommand{\skoo}{\bibitem[Sk11]{Sk11} \emph{M. Skopenkov,} When is the set of embeddings finite up to isotopy? Intern. J. Math. 26:7 (2015), 28 pp. arXiv:1106.1878.}

\newcommand{\skofo}{\bibitem[Sk14]{Sk14} \emph{A. Skopenkov,} How do autodiffeomorphisms act on embeddings, Proc. A of the Royal Society of Edinburgh, 148:4 (2018), 835--848. arXiv:1402.1853.}

\newcommand{\sks}{\bibitem[Sk14]{Sk14} * \emph{A. Skopenkov,} Realizability of hypergraphs and intrinsic linking  theory, Mat. Prosveschenie, 32 (2024), 125--159, arXiv:1402.0658.}

\newcommand{\sksr}{\bibitem[Sk14]{Sk14} * \emph{А. Скопенков,} Реализуемость гиперграфов и неотъемлемая зацепленность, Мат. просвещение, 32 (2024), 125--159. arXiv:1402.0658.}

%\url{http://arxiv.org/abs/1402.0658}

\newcommand{\skof}{\bibitem[Sk15]{Sk15} * \emph{А. Скопенков,} Алгебраическая топология с геометрической точки зрения, Москва, МЦНМО, 2015 (1е издание).}

\newcommand{\skofe}{\bibitem[Sk15]{Sk15} * \emph{A. Skopenkov,} Algebraic Topology From Geometric Viewpoint (in Russian), MCCME, Moscow, 2015 (1st edition). }

\newcommand{\skofel}{\bibitem[Sk15e]{Sk15e} * \emph{А. Скопенков,} Алгебраическая топология
с геометрической точки зрения, эл. версия, \url{http://www.mccme.ru/circles/oim/home/combtop13.htm\#photo}}

%http://arxiv.org/abs/0808.1395,

\newcommand{\skotzr}{\bibitem[Sk20]{Sk20} * \emph{А. Скопенков,} Алгебраическая топология с геометрической точки зрения, Москва, МЦНМО, 2020 (2е издание).
Обновляемая версия части книги: \url{http://www.mccme.ru/circles/oim/obstruct.pdf}}

\newcommand{\skotz}{\bibitem[Sk20]{Sk20} * \emph{A. Skopenkov,} Algebraic Topology From Geometric Standpoint (in Russian), MCCME, Moscow, 2020 (2nd edition).
%Electronic version: \url{http://www.mccme.ru/circles/oim/home/combtop13.htm\#photo}
Update of a part: \url{http://www.mccme.ru/circles/oim/obstruct.pdf} .
Part of the English translation: \url{https://www.mccme.ru/circles/oim/obstructeng.pdf}.}

%(Accepted for English translation to  `Moscow Lecture Notes' series of Springer; refused by Springer to be published under a reasonable contract.
%Submitted to AMS in 2022; positive reviews requiring revision received in 2023; the author is recommended to submit the book elsewhere, without AMS explicitly stating the required rejection decision.)

\newcommand{\skofp}{\bibitem[Sk15]{Sk15} \emph{A. Skopenkov,} Classification of knotted tori,
Proc. A of the Royal Soc. of Edinburgh, 150:2 (2020), 549-567. Full version: arXiv:1502.04470.}

%\url{https://doi.org/10.1017/prm.2018.141},

\newcommand{\skos}{\bibitem[Sk16]{Sk16} * \emph{A. Skopenkov,} A user's guide to the topological Tverberg Conjecture, arXiv:1605.05141v5. Abridged earlier published version: Russian Math. Surveys, 73:2 (2018), 323--353.}

%Section 4 of the published version is available as {\it A. Skopenkov,}
%On van Kampen-Flores, Conway-Gordon-Sachs and Radon theorems, arXiv:1704.00300.

%https://doi.org/10.1070/rm9774

\newcommand{\skosd}{\bibitem[Sk16']{Sk16'} * \emph{A. Skopenkov,} Stability of intersections of graphs in the plane and the van Kampen obstruction, Topol. Appl. 240(2018) 259--269, arXiv:1609.03727.}

%https://doi.org/10.1016/j.topol.2018.02.029

\newcommand{\skosc}{\bibitem[Sk16c]{Sk16c} * \emph{A. Skopenkov,}  Embeddings in Euclidean space: an introduction to their classification, to appear in Boll. Man. Atl. %\linebreak
\url{http://www.map.mpim-bonn.mpg.de/Embeddings_in_Euclidean_space:_an_introduction_to_their_classification}}

\newcommand{\skosie}{\bibitem[Sk16e]{Sk16e} * \emph{A. Skopenkov,} Embeddings just below the stable range: classification, to appear in Boll. Man. Atl.
%\linebreak
\url{http://www.map.mpim-bonn.mpg.de/Embeddings_just_below_the_stable_range:_classification}}

\newcommand{\skost}{\bibitem[Sk16t]{Sk16t} * \emph{A. Skopenkov,} 3-manifolds in 6-space, to appear in Boll. Man. Atl. \url{http://www.map.mpim-bonn.mpg.de/3-manifolds_in_6-space}.}

\newcommand{\skosf}{\bibitem[Sk16f]{Sk16f} * \emph{A. Skopenkov,} 4-manifolds in 7-space, to appear in Boll. Man. Atl. \url{http://www.map.mpim-bonn.mpg.de/4-manifolds_in_7-space}.}

\newcommand{\skosh}{\bibitem[Sk16h]{Sk16h} * \emph{A. Skopenkov,} High codimension links, to appear in Boll. Man. Atl.
\linebreak
\url{http://www.map.mpim-bonn.mpg.de/High_codimension_links}.}

\newcommand{\skosi}{\bibitem[Sk16i]{Sk16i} * \emph{A. Skopenkov,} Isotopy, submitted to Boll. Man. Atl.
\url{http://www.map.mpim-bonn.mpg.de/Isotopy}.}

\newcommand{\skosk}{\bibitem[Sk16k]{Sk16k} * \emph{A. Skopenkov,} Knotted tori,
%\linebreak
\url{http://www.map.mpim-bonn.mpg.de/Knotted_tori}.}

\newcommand{\skoss}{\bibitem[Sk16s]{Sk16s} * \emph{A. Skopenkov,} Knots, i.e. embeddings of spheres,
\linebreak
\url{http://www.map.mpim-bonn.mpg.de/Knots,_i.e._embeddings_of_spheres}.}

\newcommand{\skose}{\bibitem[Sk17]{Sk17} \emph{A. Skopenkov,}
Eliminating higher-multiplicity intersections in the metastable dimension range. arXiv:1704.00143.}

\newcommand{\skosed}{\bibitem[Sk17v]{Sk17v} * \emph{A. Skopenkov,}
On van Kampen-Flores, Conway-Gordon-Sachs and Radon theorems,  arXiv:1704.00300.}

\newcommand{\sk}{\bibitem[Sk17o]{Sk17o} \emph{A. Skopenkov,} On the metastable Mabillard-Wagner conjecture.  arXiv:1702.04259.}

\newcommand{\skmos}{\bibitem[Sk17d]{Sk17d} \emph{M. Skopenkov}. Discrete field theory: symmetries and conservation laws, arXiv:1709.04788.}

\newcommand{\skoe}{\bibitem[Sk18]{Sk18} * \emph{A. Skopenkov.} Invariants of graph drawings in the plane.
Arnold Math. J., 6 (2020) 21--55; full version: arXiv:1805.10237.}

%DOI 10.1007/s40598-019-00128-5
%https://link.springer.com/article/10.1007/s40598-019-00128-5

\newcommand{\skoer}{\bibitem[Sk18]{Sk18} * \emph{А. Скопенков,} Инварианты изображений графов на плоскости,
Мат. просвещение, 31 (2023), 74-127. arXiv:1805.10237.}

%\url{https://old.mccme.ru//free-books//matpros/pdf/mp-31.pdf}

\newcommand{\sktthd}{\bibitem[Sk23']{Sk23'} * \emph{A. Skopenkov.} Invariants of graph drawings in the plane (in Russian). Mat. Prosveschenie, 31 (2023), 74-127. arXiv:1805.10237.}

\newcommand{\skoeo}{\bibitem[Sk18o]{Sk18o} * \emph{A. Skopenkov.} A short exposition of S. Parsa's theorems on intrinsic linking and non-realizability. Discr. Comp. Geom. 65:2 (2021), 584--585; full version:  arXiv:1808.08363.}

%http://link.springer.com/10.1007/s00454-019-00158-y
%https://link.springer.com/epdf/10.1007/s00454-019-00158-y?sharing_token=
%XGdS8VlnK-pDzbEwtf3mBfe4RwlQNchNByi7wbcMAY6PJ4b9nP990kJl5DGh-51rxqVRnymDKW334OTc8lvTb8dMBR5PKi5EgDDh7R4xQBwahNi
%GoRrM0MA4cWoY55UZZGfGzKyrd9k-v04HAYod1Bqt4TGtvvU_Gv8j3-aCwKA%3D

\newcommand{\skona}{\bibitem[Sk19]{Sk19} * \emph{A. Skopenkov,} A short exposition of the Levine-Lidman example of spineless 4-manifolds, arXiv:1911.07330.}

\newcommand{\sktze}{\bibitem[Sk21m]{Sk21m} * \emph{A. Skopenkov.} Mathematics via Problems. Part 1: Algebra. Amer. Math. Soc., Providence, 2021. Preliminary version: \url{https://www.mccme.ru/circles/oim/algebra_eng.pdf}}

\newcommand{\sktz}{\bibitem[Sk20u]{Sk20u} * \emph{A. Skopenkov.} A user's guide to basic knot and link theory,
in: Topology, Geometry, and Dynamics, Contemporary Mathematics, vol. 772, Amer. Math. Soc., Providence, RI, 2021, pp. 281--309.
%https://doi.org/10.1090/conm/772.
Russian version: Mat. Prosveschenie 27 (2021), 128--165. arXiv:2001.01472.}

\newcommand{\sktzru}{\bibitem[Sk20u]{Sk20u} * \emph{А. Скопенков.} Основы теории узлов и зацеплений для пользователя, Мат. просвещение, 27 (2021), 128--165. arXiv:2001.01472.}

\newcommand{\sktzo}{\bibitem[Sk20o]{Sk20o} \emph{A. Skopenkov.} On some results of S. Abramyan and T. Panov, arXiv:2005.11152.}

\newcommand{\sktzr}{\bibitem[Sk20e]{Sk20e} * \emph{A. Skopenkov.}
Extendability of simplicial maps is undecidable, Discr. Comp. Geom., 69:1 (2023), 250--259, arXiv:2008.00492.}

%https://rdcu.be/c0GiZ
%10.1007/s00454-022-00454-0

\newcommand{\sktzd}{\bibitem[Sk21d]{Sk21d} * \emph{A. Skopenkov.}
On different reliability standards in current mathematical research, arXiv:2101.03745.
More often updated version: \url{https://www.mccme.ru/circles/oim/rese_inte.pdf}.}

\newcommand{\sktt}{\bibitem[Sk22]{Sk22} * \emph{A. Skopenkov.} Invariants of embeddings of 2-surfaces in 3-space,
arXiv:2201.10944.}

\newcommand{\skttn}{\bibitem[Sk22n]{Sk22n} * \emph{A. Skopenkov}, Netflix problem and realization of (hyper)graphs, \url{https://www.mccme.ru/circles/oim/home/netflix20sep.pdf}}

\newcommand{\sktth}{\bibitem[Sk23]{Sk23} \emph{A. Skopenkov.}  To S. Parsa's theorem on embeddability of joins, arXiv:2302.11537.}

\newcommand{\sktf}{\bibitem[Sk24]{Sk24} * \emph{A. Skopenkov.} Double and triple linking numbers in space (in Russian). Mat. Prosveschenie, 33 (2024), 87--132.}

\newcommand{\sktfr}{\bibitem[Sk24]{Sk24} * \emph{А. Скопенков.} Двойные и тройные коэффициенты зацепления в пространстве. Мат. просвещение, 33 (2024), 87--132.}

\newcommand{\sktfb}{\bibitem[Sk24]{Sk24} \emph{A. Skopenkov.} The band connected sum and the second Kirby move for higher-dimensional links (full version), arXiv:2406.15367.}

% \url{http://www.map.mpim-bonn.mpg.de/images/b/b1/Ktori_link.pdf}.

\newcommand{\sktfe}{\bibitem[Sk24]{Sk24} \emph{A. Skopenkov.}
Embeddings of $k$-complexes in $2k$-manifolds and minimum rank of partial symmetric matrices, arXiv:2112.06636v4.}

\newcommand{\skd}{\bibitem[Sk]{Sk} * \emph{А. Скопенков.} Алгебраическая топология с алгоритмической точки зрения, %\linebreak
\url{http://www.mccme.ru/circles/oim/algor.pdf}.}

\newcommand{\skde}{\bibitem[Sk]{Sk} * \emph{A. Skopenkov.} Algebraic Topology From Algorithmic Standpoint, draft of a book, mostly in Russian,
%\linebreak
\url{http://www.mccme.ru/circles/oim/algor.pdf}.}

%LaTeX Error: Command \ss already defined Or name \end... illegal, see p.192 of the manual.

\newcommand{\skon}{\bibitem[Skw]{Skw} * \emph{A. Skopenkov.} Whitney trick for eliminating multiple intersections, slides for talks at St Petersburg, Brno, Kiev, Moscow,  \url{https://www.mccme.ru/circles/oim/eliminat_talk.pdf}.}

\newcommand{\skl}{\bibitem[EEF]{EEF} * {\it Proposed by D. Eliseev, A. Enne, M. Fedorov, A. Glebov, N. Khoroshavkina, E. Morozov, A. Skopenkov, R. \v Zivaljevi\'c.}
A user's guide to knot and link theory, \url{https://www.turgor.ru/lktg/2019/3} .}

\newcommand{\skr}{\bibitem[Skr]{Skr} * \emph{A. Skopenkov.} Realizability of hypergraphs, slides for talks,  \url{https://www.mccme.ru/circles/oim/algor1_beamer.pdf}.}

%obsolete

\newcommand{\skt}{\bibitem[Skt]{Skt} * \emph{A. Skopenkov.} Transparent anonymous peer review,
\linebreak
\url{https://www.mccme.ru/circles/oim/home/transp_peer_review.htm} .}

\newcommand{\rslktg}{\bibitem[KRR+]{RRSl} * Towards higher-dimensional combinatorial geometry, presented by
E. Kogan, V. Retinskiy, E. Riabov and A. Skopenkov, \url{https://www.mccme.ru/circles/oim/multicomb.pdf} .}

%\url{https://www.turgor.ru/lktg/2020/3/index.html}

\newcommand{\sm}{\bibitem[Sm]{Sm} S. Smirnov.}

\newcommand{\sper}{\bibitem[Sp]{Sp} * Sperner's lemma defeats the rental harmony problem, \url{https://www.youtube.com/watch?v=7s-YM-kcKME}.}

\newcommand{\sset}{\bibitem[SS83]{SS83} \emph{Е. В. Щепин, М. А. Штанько.} Спектральный критерий вложимости компактов в евклидовы пространства, Труды Ленинградской Международной Топологической конференции. Л.: Наука, 1983. С.~135-142.}

\newcommand{\ssnt}{\bibitem[SS92]{SS92} \emph{J.~Segal and S.~Spie\.z.} Quasi embeddings and embeddings of polyhedra in $\R^m$,  Topol. Appl., 45 (1992) 275--282.}

\newcommand{\sszt}{\bibitem[SS03]{SS03} \emph{F. W. Simmons and F. E. Su.}
Consensus-halving via theorems of Borsuk-Ulam and Tucker, Math. Social Sciences 45 (2003) 15–25. \url{https://www.math.hmc.edu/~su/papers.dir/tucker.pdf}.}

\newcommand{\ssot}{\bibitem[SS13]{SS13} \emph{M. Schaefer and D. \v Stefankovi\v c.} Block additivity of $\Z_2$-embeddings. In Graph drawing, volume 8242 of Lecture Notes in Comput. Sci., 185--195.
Springer, Cham, 2013. \url{http://ovid.cs.depaul.edu/documents/genus.pdf}}

\newcommand{\sstt}{\bibitem[SS23]{SS23} \emph{A. Skopenkov and O. Styrt,} Embeddability of joinpowers, and minimal rank of partial matrices, arXiv:2305.06339.}

\newcommand{\sssne}{\bibitem[SSS]{SSS} \emph{J. Segal, A. Skopenkov and S. Spie\. z.}
Embeddings of polyhedra in $\R^m$ and the deleted product obstruction, Topol. Appl., 85 (1998), 225-234.}

\newcommand{\sstnf}{\bibitem[SST95]{SST95} \emph{R. S. Simon, S. Spie\. z and H. Toru\'nczyk.}
T\lowercase{HE EXISTENCE OF EQUILIBRIA IN CERTAIN GAMES, SEPARATION FOR FAMILIES OF CONVEX FUNCTIONS
AND A THEOREM OF BORSUK-ULAM TYPE}, Israel J. Math 92 (1995) 1--21.}

\newcommand{\sstzt}{\bibitem[SST02]{SST02} \emph{R. S. Simon, S. Spie\. z and H. Toru\'nczyk.}
E\lowercase{QUILIBRIUM EXISTENCE AND TOPOLOGY IN SOME REPEATED GAMES WITH INCOMPLETE INFORMATION},
Trans. Amer. Math. Soc. 354:12 (2002) 5005-5026.}

\newcommand{\stez}{\bibitem[ST80]{ST80} * {\it H.~Seifert and W.~Threlfall.}
A textbook of topology, v~89 of {\em Pure and Applied Mathematics}.
Academic Press, New York-London, 1980.}

%Translated from the German edition of 1934 by Michael A. Goldman.

\newcommand{\stzs}{\bibitem[ST07]{ST07} * \emph{А. Скопенков и А. Телишев.}
И вновь о критерии Куратовского планарности графов, Мат. Просвещение, 11 (2007), 159--160.}

\newcommand{\stzse}{\bibitem[ST07]{ST07} * \emph{A. Skopenkov and A. Telishev}, Once again on the Kuratowski graph planarity criterion, Mat. Prosveschenie, 11 (2007), 159-160. arXiv:0802.3820.}

\newcommand{\stos}{\bibitem[ST17]{ST17} \emph{A. Skopenkov  and M. Tancer,}
Hardness of almost embedding simplicial complexes in $\R^d$, Discr. Comp. Geom., 61:2 (2019), 452--463. arXiv:1703.06305.}

\newcommand{\stno}{\bibitem[ST91]{ST91} \emph{S.~Spie\. z and H.~Toru\'nczyk}, Moving compacta in $\R^m$ apart,
Topol. Appl. 41 (1991), 193--204.}

\newcommand{\sttt}{\bibitem[St24]{St24} \emph{M. Starkov,} An example of an `unlinked' set of $2k+3$ points in $2k$-space, arXiv:2402.09002.}

\newcommand{\sunt}{\bibitem[Su]{Su} * \emph{Д. Судзуки.} Основы дзэн-буддизма. Наука дзэн --- ум дзэн. Киев: Преса Украiни. 1992.}
%http://zen-books.narod.ru/

\newcommand{\stwh}{\bibitem[SW]{SW} * \url{http://www.map.mpim-bonn.mpg.de/Stiefel-Whitney_characteristic_classes}}

\newcommand{\sz}{\bibitem[SZ05]{SZ} \emph{T. Sch\"oneborn and G. Ziegler}, The Topological Tverberg Theorem and Winding Numbers, J. Comb. Theory, Ser. A, 112:1 (2005) 82--104, arXiv:math/0409081.}

\newcommand{\szno}{\bibitem[Sz91]{Sz91} \emph{A.~Sz\"ucs,} On the cobordism groups of immersions and embeddings,
Math. Proc. Camb. Phil. Soc., 109 (1991) 343--349.}

\newcommand{\ta}{\bibitem[Ta]{Ta} * Handbook of Graph Drawing and Visualization. ed. by R. Tamassia, CRC Press, 2016.}

%\url{https://cs.brown.edu/~rt/gdhandbook/}.}

\newcommand{\tanf}{\bibitem[Ta95]{Ta95} \emph{K. Taniyama,} Homology classification of spatial embeddings of a graph, Topol. Appl. 65 (1995) 205--228.}

\newcommand{\tazz}{\bibitem[Ta00]{Ta00} \emph{K. Taniyama,} Higher dimensional links in a simplicial complex embedded in a sphere, Pacific Jour. of Math. 194:2 (2000), 465-467.}

\newcommand{\theo}{\bibitem[Th81]{Th81} * \emph{C.~Thomassen,} Kuratowski's theorem, J.~Graph. Theory 5 (1981), 225--242.}

\newcommand{\tooo}{\bibitem[To11]{To11} \emph{Tonkonog D.} Embedding 3-manifolds with boundary into closed 3-manifolds, Topol. Appl. 158 (2011), 1157-1162. arXiv:1003.3029.}

%http://arxiv.org/abs/1003.3029.

\newcommand{\tsbzf}{\bibitem[TSB]{TSB} \emph{D. M. Thilikos, M. Serna and H. L. Bodlaender},
Cutwidth I: A linear time fixed parameter algorithm, J. of Algorithms, 56:1 (2005), 1--24.}

%https://www.sciencedirect.com/science/article/pii/S0196677405000167

\newcommand{\tsbzfd}{\bibitem[TSB05']{TSB05'} \emph{D. M. Thilikos, M. Serna and H. L. Bodlaender},
Cutwidth II: , J. of Algorithms, 56:1 (2005), 25--49.}

%https://www.sciencedirect.com/science/article/pii/S0196677405000179

%c.tuffley@massey.ac.nz
%https://arxiv.org/pdf/1112.4558.pdf

\newcommand{\umse}{\bibitem[Um78]{Um78} \emph{B. Ummel.} The product of nonplanar complexes does not imbed in 4-space, Trans. Amer. Math. Soc., 242 (1978) 319--328.}

%\bibitem[Vd95]{Vd95} \emph{A. Vdovina.} Construction of orientable Wicks forms and estimation of their number, %Comm. Algebra 23 (1995), 3205-3233.

%\bibitem[Vd96]{Vd96} \emph{A. Vdovina.} On the number of nonorientable Wicks forms in a free group,
%Proc. Royal Soc. Edinb. Sect. A 126 (1996), 113-116.

%\bibitem[Vd97]{Vd97} \emph{A. Vdovina.} Product of commutators in free products,
%Internat. J. Algebra Comput. 7 (1997), 471-485.

\newcommand{\vant}{\bibitem[Va92]{Va92} * \emph{V.~A.~Vassiliev.} Complements of discriminants of smooth maps: Topology and applications, Amer. Math. Soc., Providence, RI, 1992 (рус. перевод: В. А. Васильев, Топология дополнений к дискриминантам, Фазис, Москва, 1997).}

\newcommand{\val}{\bibitem[Val]{Val} * \url{https://en.wikipedia.org/wiki/Valknut}}

%http://katlas.math.toronto.edu/wiki/File:Noeuds-3carres_en_translation.png

\newcommand{\vi}{\bibitem[Vi]{Vi} * \emph{O. Viro.}
Some integral calculus based on Euler characteristic, Lect. Notes in Math. 1346.}

\newcommand{\vizt}{\bibitem[Vi02]{Vi02} * \emph{Э. Б. Винберг.} Курс алгебры. Москва. Факториал Пресс. 2002.}

\newcommand{\vizteng}{\bibitem[Vi02]{Vi02} * \emph{E. B. Vinberg.} A Course in Algebra. Graduate Studies in Mathematics, vol. 56. 2003.}

\newcommand{\vinhzs}{\bibitem[VINH07]{VINH07} * \emph{О. Я. Виро, О. А. Иванов, Н. Ю. Нецветаев и В. М. Харламов.}
Элементарная топология, МЦНМО. 2007.}

\newcommand{\vktt}{\bibitem[vK32]{vK32} \emph{E.~R.~van~Kampen}, Komplexe in euklidischen R\"aumen, Abh. Math. Sem. Hamburg, 9 (1933) 72--78; Berichtigung dazu, 152--153.}
%English translation by Tu T$\hat a$m Ngu$\tilde{\hat e}$n-Phan:
%\url{https://sites.google.com/site/tutamnguyenphan/van_Kampen.pdf}

\newcommand{\kafo}{\bibitem[vK41]{vK41} \emph{E. R. van Kampen,} Remark on the address of S. S. Cairns,
in Lectures in Topology, 311--313, University of Michigan Press, Ann Arbor, MI, 1941.}

\newcommand{\vo}{\bibitem[Vo96]{vo96} \emph{A. Yu. Volovikov,} On a topological generalization of the Tverberg theorem. Math. Notes 59:3 (1996), 324--326.}

\newcommand{\vopns}{\bibitem[Vo96v]{Vo96v} \emph{A. Yu. Volovikov,} On the van Kampen-Flores Theorem.
Math. Notes 59:5 (1996), 477--481.}

%<alvostef@mail.ru>

\newcommand{\vznt}{\bibitem[VZ93]{VZ93} \emph{A. Vu\v ci\'c and R. T. \v Zivaljevi\'c}, Note on a conjecture of Sierksma, Discr. Comput. Geom. 9 (1993), 339-349.}

\newcommand{\vzzn}{\bibitem[VZ09]{VZ09} \emph{S. T. Vre\'cica and R. T. \v Zivaljevi\'c},  Chessboard complexes
indomitable, J. of Comb. Theory, Ser. A 118:7 (2011), 2157--2166. arXiv:0911.3512.}

\newcommand{\walst}{\bibitem[Wa62]{Wa62} \emph{C.~T.~C.~Wall}, Classification of $(n-1)$-connected $2n$-manifolds, Ann. of Math., 75 (1962) 163--189.}

%https://www.maths.ed.ac.uk/~v1ranick/papers/n-1con2n.pdf

\newcommand{\wallss}{\bibitem[Wa67]{Wa67} \emph{C.~T.~C.~Wall.} Classification problems in differential topology, IV, Thickenings, Topology 1966. 5. P. 73--94.}

\newcommand{\waldss}{\bibitem[Wa67m]{Wa67m} \emph{F. Waldhausen.} Eine Klasse von 3-dimensional Mannigfaltigkeiten, I. Invent. Math. 1967. 3. P.~308-333.}

\newcommand{\walsz}{\bibitem[Wa70]{Wa70} \emph{C. T. C. Wall,} Surgery on compact manifolds,
1970, Academic Press, London.}

\newcommand{\wess}{\bibitem[We67]{We67} \emph{C.~Weber.} Plongements de poly\`edres dans le domain metastable, Comment. Math. Helv. 42 (1967), 1--27.}

\newcommand{\whit}{\bibitem[Wl]{Wl} * \url{https://en.wikipedia.org/wiki/Whitehead_link}}

\newcommand{\winum}{\bibitem[Wn]{Wn} * \url{https://en.wikipedia.org/wiki/Winding_number}}

\newcommand{\wrss}{\bibitem[Wr77]{Wr77} \emph{P. Wright.} Covering 2-dimensional polyhedra by 3-manifolds spines.
Topology. 16 (1977), 435--439.}

\newcommand{\wufe}{\bibitem[Wu58]{Wu58} \emph{W. T. Wu.} On the realization of complexes in a euclidean space (in Chinese): I, Sci Sinica, 7 (1958) 251--297; II, Sci Sinica, 7 (1958) 365--387; III, Sci Sinica, 8 (1959) 133--150.}

\newcommand{\wufn}{\bibitem[Wu59]{Wu59} \emph{W.~T.~Wu.} On the isotopy of a finite complex in Euclidean space, I, II, Science Record, N.S. 3:8 (1959) 342--347, 348--351.}

\newcommand{\wusf}{\bibitem[Wu65]{Wu65} * \emph{W. T. Wu.} A Theory of Embedding, Immersion and Isotopy of Polytopes in an Euclidean Space. Peking: Science Press, 1965.}

% Peking - OK? Da. A. S.

\newcommand{\yann}{\bibitem[Ya99]{Ya99} \emph{Z. Yang.} Computing Equilibria and Fixed Points: The Solution of Nonlinear Inequalities, Kluwer, Springer Science + Business Media, 1990.}

\newcommand{\zesz}{\bibitem[Ze60]{Ze60} \emph{E. C. Zeeman}, Unknotting spheres in five dimensions, Bull. Amer. Math. Soc. 66 (1960) 198.
\linebreak
\url{https://www.ams.org/journals/bull/1960-66-03/S0002-9904-1960-10431-4/S0002-9904-1960-10431-4.pdf}}

\newcommand{\z}{\bibitem[Ze]{Z} * \emph{E. C. Zeeman}, A Brief History of Topology, UC Berkeley, October 27, 1993, On the occasion of Moe Hirsch's 60th birthday, \url{http://zakuski.utsa.edu/~gokhman/ecz/hirsch60.pdf}.}

\newcommand{\zioz}{\bibitem[Zi10]{Zi10} * \emph{D. \v Zivaljevi\'c}, Borromean and Brunnian Rings, \url{http://www.rade-zivaljevic.appspot.com/borromean.html}.}

\newcommand{\zioo}{\bibitem[Zi11]{Zi11} * \emph{G. M. Ziegler}, 3N Colored Points in a Plane, Notices of the Amer. Math. Soc., 58:4 (2011), 550-557.}

%\url{http://www.ams.org/notices/201104/rtx110400550p.pdf}.}

\newcommand{\zot}{\bibitem[Zi13]{Z13} \emph{A. Zimin.} Alternative proofs of the Conway-Gordon-Sachs Theorems, arXiv:1311.2882.}

\newcommand{\zss}{\bibitem[ZSS]{ZSS} * Элементы математики в задачах: через олимпиады и кружки к профессии. Сборник под редакцией А. Заславского, А. Скопенкова и М. Скопенкова. М.: МЦНМО, 2018.
Обновляемая версия части книги: \url{http://www.mccme.ru/circles/oim/materials/sturm.pdf}.}

\newcommand{\zu}{\bibitem[Zu]{Zu} \emph{J. Zung.} A non-general-position Parity Lemma,
\url{http://www.turgor.ru/lktg/2013/1/parity.pdf}.}

%There is an improved unpublished version.

{\it Books, surveys and expository papers in this list are marked by the stars.}

\end{document}